\setlist[description]{style=multiline,topsep=4pt,align=parright}
\let\reftagform@=\tagform@
\def\tagform@#1{\maketag@@@{(\ignorespaces\textcolor{black}{#1}\unskip\@@italiccorr)}}
\newcommand{\iref}[1]{\textup{\reftagform@{\tcr{\ref{#1}}}}}
\begin{document}
\title{An SDE perspective on stochastic convex optimization}
\author{Rodrigo Maulen S.\thanks{Normandie Universit\'e, ENSICAEN, UNICAEN, CNRS, GREYC, France. E-mail: rodrigo.maulen@ensicaen.fr} \and
Jalal Fadili\thanks{Normandie Universit\'e, ENSICAEN, UNICAEN, CNRS, GREYC, France. E-mail: Jalal.Fadili@ensicaen.fr} \and Hedy Attouch\thanks{IMAG, CNRS, Universit\'e Montpellier, France. E-mail: hedy.attouch@umontpellier.fr}
}
\date{}
\maketitle

\begin{abstract}
In this paper, we analyze the global and local behavior of gradient-like flows under stochastic errors towards the aim of solving convex optimization problems with noisy gradient input. We first study the unconstrained differentiable convex case, using a stochastic differential equation where the drift term is minus the gradient of the objective function and the diffusion term is either bounded or square-integrable. In this context, under Lipschitz continuity of the gradient, our first main result shows almost sure convergence of the objective and the trajectory process towards a minimizer of the objective function. We also provide a comprehensive complexity analysis by establishing several new pointwise and ergodic convergence rates in expectation for the convex, strongly convex and (local) {\L}ojasiewicz case. The latter, which involves local analysis, is  challenging and requires non-trivial arguments from measure theory. Then, we extend our study to the constrained case and more generally to certain nonsmooth situations. We show that several of our results have natural extensions obtained by replacing the gradient of the objective function by a cocoercive monotone operator. This makes it possible to obtain similar convergence results for optimization problems with an additively  "smooth + non-smooth" convex structure. Finally, we consider another extension of our results to non-smooth optimization which is based on the Moreau envelope.

\end{abstract}

\begin{keywords}
Convex optimization, Stochastic Differential Equation, Stochastic gradient descent, {\L}ojasiewicz inequality, KL inequality, Convergence rate, Asymptotic behavior.
\end{keywords}

\begin{AMS}
37N40, 46N10, 49M30, 65B99, 65K05, 65K10, 90B50, 90C25
\end{AMS}


\section{Introduction}\label{sec:intro}
\subsection{Problem Statement}\label{problem_statement}

We aim to solve convex minimization problems by means of stochastic differential equations whose drift term is driven by the gradient of the objective function. This allows for noisy (inaccurate) gradient input to be taken into account. Consider the minimization problem
\begin{equation}\label{P}\tag{P}
    \min_{x\in \R^d} f(x),
\end{equation}

where the objective $f$ satisfies the following standing assumptions:
\begin{align}\label{H0}
\begin{cases}
\text{$f$ is continuously differentiable and convex with $L$-Lipschitz continuous gradient}; \\
\calS \eqdef \argmin (f)\neq\emptyset. \tag{$\mathrm{H}_0$} 
\end{cases}
\end{align} 
We will also later deal with the constrained case, and more generally with additively structured "smooth + nonsmooth" convex optimization.

Let us first recall some basic facts about the deterministic case. To solve \eqref{P}, a fundamental dynamic to consider is the gradient flow of $f$, \ie the gradient descent dynamic with initial condition $X_0\in\R^d$:
\begin{equation}
\begin{cases}\label{GF}\tag{GF}
\begin{aligned}
\dot{x}&=-\nabla f(x),\quad t>0\\
x(0)&=X_0.
\end{aligned}
\end{cases}
\end{equation}

It is well known since the founding papers of Brezis, Baillon, Bruck in the 1970s that, if the solution set $\argmin f$ of \eqref{P} is non-empty, then each solution trajectory of \eqref{GF} converges, and its limit belongs to $\argmin f$.
In fact, this result is true in a more general setting, simply assuming that the objective function $f$ is convex, lower semicontinuous (lsc) and proper (in which case we must consider the differential inclusion obtained by replacing in \eqref{GF} the gradient of $f$ by the sub-differential $\partial f$).

In many cases, the gradient input is subject to noise, for example, if the gradient cannot be evaluated directly, or due to some other exogenous factor. In such scenario, one can model these errors using a stochastic integral with respect to the measure defined by a continuous It\^o martingale. This entails the following stochastic differential equation as a stochastic counterpart of \eqref{GF}:
\begin{equation}\label{CSGD}\tag{$\mathrm{SDE}$}
\begin{cases}
\begin{aligned}
dX(t)&=-\nabla f(X(t))dt+\sigma(t,X(t))dW(t), \quad t> 0\\
X(0)&=X_0,
\end{aligned}
\end{cases}
\end{equation}
defined over a filtered probability space $(\Omega,\calF,\{\calF_t\}_{t\geq 0},\PP)$, where the diffusion (volatility) term $\sigma:\R_+\times\R^d\rightarrow \R^{d\times m}$ is matrix-valued measurable function, and $W$ is the $m$-dimensional Brownian motion. 

Our goal is to study the dynamic of \eqref{CSGD} and its long time behavior in order to solve \eqref{P}. To identify the assumptions necessary to hope for such a behavior to occur, remember that when the diffusion term $\sigma$ is a positive real constant, it is well-known that $X(t)$ in this case is a continuous-time diffusion process known as Langevin diffusion, and has a unique invariant probability measure $\pi_{\sigma}$ with density $\propto e^{-2 f(x)/\sigma^2}$ \cite{Bhattacharya78}. It is easy to see that the measure $\pi_\sigma$ gets concentrated around $\argmin f$ as $\sigma$ tends to $0^+$ with $\lim_{\sigma \to 0^+} \pi_\sigma(\argmin f) = 1$; see \eg \cite{Catoni99}.

Motivated by this observation, our paper will then mostly focus on the case where $\sigma(\cdot,x)$ vanishes sufficiently fast as $t \to +\infty$ uniformly in $x$, and some guarantees will also be provided for uniformly bounded $\sigma$. Therefore, throughout, the entries $\sigma_{ik}$ are assumed to satisfy:
\begin{equation*}
\tag{$\mathrm{H}$}\label{H}
\begin{cases}
\sup_{t \geq 0,x \in \R^d} |\sigma_{ik}(t,x)|<\infty, \\
|\sigma_{ik}(t,x')-\sigma_{ik}(t,x)|\leq l_0\norm{x'-x},
\end{cases}
\end{equation*}   
for some $l_0>0$ and for all  $t\geq 0, x, x'\in \R^d$. The Lipschitz continuity assumption is mild and required to ensure well-posedness of \eqref{CSGD}.



\subsection{Contributions}

We study the properties of the process $X(t)$ and $f(X(t))$ for the stochastic differential equation \eqref{CSGD} from an optimization perspective, under the assumptions \eqref{H0} and \eqref{H}. When the diffusion term is uniformly bounded, we show convergence of $\EE[f(X(t))-\min f]$ to a noise-dominated region both for the convex and strongly convex case. When the diffusion term is square-integrable, we show in Theorem \ref{converge2} that $X(t)$ converges almost surely to a solution of \eqref{P}, which is a new result to the best of our knowledge. Moreover, in Theorem \ref{importante0} and Proposition~\ref{beta}, we provide new ergodic and pointwise convergence rates of the objective in expectation, again for both the convex and strongly convex case. 

Then we turn to a local analysis relying on the {\L}ojasiewicz inequality and its strong ties with error bounds. Since this property is most often satisfied only locally, we deepen the discussion on the long time localization of the process. This is fundamental, because in the recent literature on local convergence properties of stochastic gradient descent, strong assumptions are imposed, such as $X(t)$ or $f(X(t))$ is locally bounded almost surely. Such assumptions are unfortunately unrealistic due to the presence of the Brownian Motion. We manage to circumvent this problem by using arguments from measure theory, in particular Egorov's theorem. In turn, under the {\L}ojasiewicz inequality assumption with exponent $q \geq 1/2$, this allows us to show local convergence rates of the objective and the trajectory itself in expectation over a set of events whose probability is arbitrarily close to $1$ (see Theorem \ref{importante3}).

Table~\ref{table:summary} summarizes the local and global convergence rates obtained for $\EE[f(X(t))-\min f]$. In this table, $\delta > 0$ is a parameter which is intended to be taken arbitrarily close to $0$ but different from it, $\sigma_*>0$ and $\sigma_{\infty}(\cdot)$ are defined as
\begin{equation}\label{eq:defsigstar}
\norm{\sigma(t,x)}_F^2\leq \sigma_*^2, \quad \forall t\geq 0,\forall x\in\R^d, \qqandqq 
\sigma_{\infty}(t)\eqdef \sup_{x\in\R^d}\norm{\sigma(t,x)}_F ,
\end{equation}
and $\sigma_{\infty}(\cdot)$ is a decreasing function. $\Loj^q(\calS)$ is the class of functions satisfying the {\L}ojasiewicz inequality with exponent $q \in [0,1]$ at each point of $\calS$ (see Definition~\ref{def:lojq})\footnote{Semialgebraic and more generally analytic functions is a typical class verifying the {\L}ojasiewicz inequality at each point \cite{loj1,loj2}.}.

\begin{table}[H]

\begin{center}
\begin{tabular}{|c|c|c|cc|}
\hline
\textbf{Property of $f$} & \textbf{Gradient Flow} & \textbf{SDE $(\sup_{t\geq 0}\sigma_{\infty}(t)\leq\sigma_*)$} & \multicolumn{2}{c|}{\textbf{SDE $(\sigma_{\infty}\in \Lp^2(\R_+))$}}    \\ \hline
Convex & $t^{-1}$ & $t^{-1}+\sigma_*^2$ & \multicolumn{2}{c|}{$t^{-1}$}   \\ \hline
$\mu-$Strongly Convex & $e^{-2\mu t}$ & $e^{-2\mu t}+\sigma_*^2$ & \multicolumn{2}{c|}{
$\max\{e^{-2\mu t},\sigma_{\infty}^2(t)\}$} \\ \hline
Convex $\cap$ $\Loj^{1/2}(\calS)$ (coef. $\mu$) & $e^{-\mu^2 t}$ & \ding{56} & \multicolumn{2}{c|}{
$\max\{e^{-\mu^2 t},\sigma_{\infty}^2(t)\}+\sqrt{\delta}$} \\ \hline
Convex $\cap$ $\Loj^q(\calS)$, $q\in (\frac{1}{2},1)$ & $t^{-\frac{1}{2q-1}}$ & \ding{56} & \multicolumn{2}{c|}{
$t^{-\frac{1}{2q-1}}$ \tablefootnote{This is not yet proven, our conjecture is that it is true when $\sigma_{\infty}=\calO((t+1)^{-\frac{q}{2q-1}})$ (see the detailed discussion in Conjecture~\ref{conjecture}).}$+\sqrt{\delta}$}   \\ \hline
\end{tabular}
\caption{Summary of local and global convergence rates obtained for $\EE[f(X(t))-\min f]$.}
\label{table:summary}
\end{center}
\end{table}
Although it is natural to think that we can take the limit when $\delta$ goes to $0^+$, the time from which these convergence rates are valid depends on $\delta$ and increases (potentially to $+\infty$) as $\delta$ approaches $0^+$. Assuming only the boundedness of the diffusion and the {\L}ojasiewicz inequality, we could not find better results (cells marked with \ding{56}) than those presented in the convex case. Since the {\L}ojasiewicz inequality is local, a natural approach would be to localize the process in the long term with high probability. However, it is not clear how to achieve this. \\ 




In Section~\ref{nonsmooth_structured}, we turn to extending some of the preceding results to the structured convex minimization problem 
\begin{equation}\tag{$\mathrm{P_c}$}\label{P0}
    \min_{x\in\R^d} f(x)+g(x),
\end{equation}
where $f:\R^d\rightarrow\R$ satisfies \eqref{H0}, $g: \R^d\rightarrow \Rinf$ is proper, lsc and convex and $\argmin (f+g) \neq \emptyset$. This obviously covers the case of constrained minimization of $f$ over a non-empty closed convex set. We take two different routes leading to different SDEs.

The first approach consists in reformulating \eqref{P0} as finding for zeros of the operator $M_{\mu}: \R^d \to \R^d$ 
\[
M_{\mu}(x)=\frac{1}{\mu} \pa{x- \prox_{\mu g}( x-\mu \nabla f(x))} ,
\]
where $\mu > 0$ and $\prox_{\mu g}$ is the proximal mapping of $\mu g$. It is well-known that the operator $M_{\mu}$ is cocoercive \cite{cocoercive}, hence monotone and Lipschitz continuous, and $M_{\mu}=\nabla f$ when $g$ vanishes. The idea is then to replace the operator $\nabla f$ in \eqref{CSGD} by $M_{\mu}$ leading to an SDE which will have many of the convergence properties obtained in the smooth convex case. This approach is in accordance with the deterministic theory for monotone cocoercive operators (see \cite{contgradp,minimiz,cocoercive}).  

The second approach regularizes the nonsmooth component $g$ of the objective function using its Moreau envelope 
\[
g_{\theta} (x) = \min_{z \in \R^d} g(z) + \frac{1}{2 \theta} \norm{x - z}^2 .
\]
This leads to studying the dynamic \eqref{CSGD} with the function $f+g_{\theta}$, which has a continuous Lipschitz gradient. This approximation method leads to a non-autonomous SDE. Note, however, that the noise in this case can be considered on the evaluation of $\nabla f(x)$, while it is on $M_{\mu}(x)$ in the first approach.

\subsection{Relation to prior work}
The gradient system \eqref{GF}, which is valid on a general real Hilbert space $\mathcal{H}$, is a dissipative dynamical system, whose study dates back to Cauchy \cite{Cauchy}. It plays a fundamental role in optimization: it transforms the problem of minimizing $f$ into the study of the asymptotic behavior of the trajectories of \eqref{GF}. This example was the precursor to the rich connection between continuous dissipative dynamical systems and optimization. Its Euler forward discretization (with stepsize $\gamma_k>0$) is the celebrated gradient descent scheme 
\begin{equation}\label{GD}\tag{GD}
    x_{k+1}=x_k-\gamma_k \nabla f(x_k).
\end{equation} 
Under \eqref{H0}, and for $(\gamma_k)_{k \in \N} \subset ]0,2/L[$, then we have both the convergence of the values $f(x_k)- \min f=\calO(1/k)$ (in fact even $o(1/k)$), and the weak convergence of iterates $(x_k)_{k \in\N} $ to a point in $\argmin f$. Moreover, if the {\L}ojasiewicz inequality \eqref{li} (see \cite{loj3}) is satisfied, then we can ensure the strong convergence of $(x_k)_{k\in\N}$ to a point in $\argmin f$ and faster convergence rates than those ensured by the simple convexity hypothesis (see \cite{BolteKLComplexity16,applications}).

\smallskip

Now, let us focus on the finite-dimensional case ($\mathcal{H}=\R^d$). Although the Gradient Descent is a classical algorithm to solve the convex minimization problem, with the need to handle large-scale problems (such as in various areas of data science and machine learning), there has become necessary to find ways to get around the high computational cost per iteration that these problems entail. The Robbins-Monro stochastic approximation algorithm \cite{rob} is at the heart of Stochastic Gradient Descent (SGD), which, roughly speaking, consists in cheaply and randomly approximating the gradient at the price of obtaining a random noise in the solutions. 
Given an initial point $x_0\in\R^d$, (SGD) updates the iterates according to 
\begin{equation}\tag{SGD}\label{sgdd}
x_{k+1}=x_k-\gamma\nabla f(x_k)-\gamma\xi_k, 
\end{equation}
where $\xi_k$ denotes the (random) noise term at the $k-$th iteration.




\smallskip

Recent work (see \cite{optch,continuous,dif,schrodinger,valid, cycle}) has linked algorithm \eqref{sgdd} with dynamic \eqref{CSGD}, showing the context under which \eqref{CSGD} can be seen as an approximation (under a specific error) of \eqref{sgdd} and vice-versa. For example, \eqref{CSGD} can be interpreted as the pathwise solution to the Fokker-Planck equation (see \cite{Gar85}). 

\smallskip

The Euler forward discretization (with stepsize $\gamma>0$) of \eqref{CSGD} when $d=m$ and $\sigma=\sqrt{2}I_d$ is the following algorithm: 
\begin{equation}\label{LMC}
X_{k+1}=X_k-\gamma\nabla f(X_k)+\sqrt{2\gamma}\xi_k, \tag{LMC}
\end{equation}
where $\xi_k\sim\mathcal{N}(0,I_d)$ (multivariate standard normal distribution). This algorithm, which is known as Langevin Monte Carlo (see \cite{parisi}), is a standard sampling scheme, whose purpose is to generate samples from an approximation of a target distribution, in our case, proportional to $e^{-f(x)}$. Under appropriate assumptions on $f$, when $\gamma$ is small and $k$ is large such that $k\gamma$ is large, the distribution of $X_k$ converges in different topologies or is close in various metrics to the target distribution with density $\propto e^{-f(x)}$. Asymptotic and non-asymptotic (with convergence rates) results of this kind have been studied in a number of papers under various conditions; see \cite{user,guarantee,high,nona,und,hug} and references therein. By rescaling the problem, relation between sampling (\ie \eqref{LMC}) and optimization (\ie \eqref{sgdd}) has been also investigated for the strongly convex case in \eg \cite{user}.

\smallskip



Concerning \eqref{CSGD}, one can easily infer from \cite[Proposition~7.4]{Benaim99} that assuming $\sup_{x\in\R^d}\norm{\sigma(t,x)}_F=o(1/\sqrt{\log(t)})$, and conditioning on the event that $X(t)$ is bounded, we have almost surely that the set of limits of convergent sequences $X(t_k)$, $t_k \to +\infty$ is contained in $\argmin f$. Using results on asymptotic pseudo-trajectories from \cite{Benaim99}, the work of \cite{mertikopoulos_staudigl_2018,mdnetwork,acceleration} analyzed the behavior of the Stochastic Mirror Descent dynamics:
\begin{equation}\label{eq:smd}\tag{SMD}
\begin{aligned}
dY(t)&=-\nabla f(X(t))dt+\sigma(t,X) dW(t),\\
X(t)&=Q(\eta Y(t)),
\end{aligned}
\end{equation}
where $\calX \subset \R^d$ is a closed convex feasible region, $f$ is convex with Lipschitz continuous gradient on $\calX$, $Q:\R^d\rightarrow\calX$ is the mirror map induced by some strongly convex entropy, and $\eta>0$ is a sensitivity parameter. In \cite[Theorem~4.1]{mertikopoulos_staudigl_2018}, it is shown that if $\calX$ is also assumed bounded, that $\sup_{x\in\R^d}\norm{\sigma(t,x)}_F=o(1/\sqrt{\log(t)})$, and $Q$ satisfies some continuity assumptions\footnote{Compactness of $\calX$ and the condition on $\sigma(\cdot,\cdot)$ are clearly reminiscent of \cite[Proposition~7.4]{Benaim99}, though the latter is not discussed in \cite{mertikopoulos_staudigl_2018}.}, then the solution $X(t)$ \eqref{eq:smd} converges to a point in $\argmin f$ almost surely. Similar assumptions can be found in \cite{acceleration} to obtain almost sure convergence on the objective.
Let us observe that all these results do not apply to our setting. Indeed, if $\calX=\R^d$ (unconstrained problem), $Q(x)=x$ and $\eta=1$, we recover \eqref{CSGD}. Our work does not assume any boundedness whatsoever to establish our results. This comes however at somewhat stronger assumptions on $\sigma(\cdot,\cdot)$.

\smallskip

While finalizing this work, we became aware of the recent work of \cite{cooling}, which analyzes the behavior of \eqref{CSGD} for $f\in C^2(\R^d)$ not necessarily convex and which satisfies $\sup_{x\in\R^d}\norm{\sigma(\cdot,x)}_F\in \Lp^2(\R_+)$. Conditioning on the event that $\limsup_{t\rightarrow\infty}\norm{X(t)}<\infty$, they showed that $\nabla f(X(t))\rightarrow 0$ almost surely, almost sure convergence of $f(X(t))$, and if the objective $f$ is semialgebraic (and more generally tame), they also showed almost sure convergence of $X(t)$ towards a critical point of $f$. They also made attempt to get local convergence rates under the {\L}ojasiewicz inequality that are less transparent than ours. Our analysis on the other hand leverages convexity of $f$ to establish stronger results.

\subsection{Organization of the paper}

Section~\ref{sec:notation} introduces notations and reviews some necessary material from convex and stochastic analysis. Section~\ref{sec:smooth} states our main convergence results in the case of a convex differentiable objective function whose gradient is Lipschitz continuous. We first show the almost sure convergence of the process towards the set of minimizers, then we establish convergence rates for the values. Section~\ref{error_bound_Loja} introduces further geometric properties of the objective functions, namely {\L}ojasiewicz property and related error bound, which allows to obtain improved (local) convergence rates. This covers in particular the (locally) strongly convex case. In section~\ref{nonsmooth_structured}, we extend some results to the nonsmooth case by considering the additively structured "smooth + nonsmooth" convex minimization. We develop new stochastic differential equations that naturally lend themselves to splitting techniques. Technical lemmas and theorems that are needed throughout the paper are collected in the appendix.

\section{Notation and Preliminaries}\label{sec:notation}

We will use the following shorthand notations:  given $d,n\in\N$,  $[n]\eqdef \{1,\ldots,n\}$, $\R^{d\times n}$ is the set of real matrices of size $d\times n$, and $I_d$ is the identity matrix of dimension $d$. For $M\in\R^{d\times n}$, $M^{\top}\in\R^{n\times d}$ is its transpose matrix and $\norm{M}_F$ is its Frobenius norm. For $M,M'\in\R^{d\times d}$, $M\preccurlyeq M'$ if and only if $u^{\top}(M'-M)u\geq 0$ for every $u\in\R^d$. For a set $\calD$, we denote its power set as $\calP(\calD)\eqdef \{\calC: \calC \subseteq \calD\}$. The sublevel of $f$ at height $r\in\R$ is denoted $[f\leq r]\eqdef \{x\in\R^d: f(x)\leq r\}$.




\subsection{On convex analysis}
Let us recall some important definitions and results from convex analysis in the finite-dimensional case; for a comprehensive coverage, we refer the reader to \cite{rocka}.

\smallskip

We denote by $\Gamma_0(\R^d)$ the class of proper lsc and convex functions on $\R^d$ taking values in $\Rinf$.
For $\mu > 0$, $\Gamma_{\mu}(\R^d) \subset \Gamma_0(\R^d)$ is the class of $\mu-$strongly convex functions. We denote by $C^{s}(\R^d)$ the class of $s$-times continuously differentiable functions on $\R^d$. 
For $L \geq 0$, $C_L^{1,1}(\R^d) \subset C^{1}(\R^d)$ is the set of functions on $\R^d$ whose gradient is $L-$Lipschitz continuous. 

The following \textit{Descent Lemma} which is satisfied by this class of functions plays a central role in optimization.
\begin{lemma}\label{descent}
Let $f\in C_L^{1,1}(\R^d)$, then $$f(y)\leq f(x)+\langle \nabla f(x), y-x\rangle+\frac{L}{2}
\norm{y-x}^2,\quad \forall x,y\in\R^d.$$
\end{lemma}
\begin{corollary}\label{2L}
Let $f\in C_L^{1,1}(\R^d)$ such that $\argmin f\neq\emptyset$, then 
\begin{equation*}
\norm{\nabla f(x)}^2\leq 2L(f(x)-\min f),\quad \forall x\in \R^d.\end{equation*}
\end{corollary}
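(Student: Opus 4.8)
The statement is the classical ``gradient dominated by the gap'' inequality, and the natural route is to exploit the quadratic upper model provided by the Descent Lemma (Lemma~\ref{descent}) and then minimize it explicitly. First I would dispose of the degenerate case $L=0$: then $\nabla f$ is constant, and since $\argmin f\neq\emptyset$ this constant must be the zero vector (the gradient vanishes at any minimizer), so both sides of the claimed inequality are $0$ and there is nothing to prove. Hence assume $L>0$.

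Fix $x\in\R^d$ and apply Lemma~\ref{descent} with the specific choice $y=x-\tfrac1L\nabla f(x)$, which is precisely the point minimizing the right-hand side of the Descent Lemma in $y$. This gives
\[
f\pa{x-\tfrac1L\nabla f(x)}\leq f(x)-\tfrac1L\norm{\nabla f(x)}^2+\tfrac{L}{2}\cdot\tfrac1{L^2}\norm{\nabla f(x)}^2 = f(x)-\tfrac1{2L}\norm{\nabla f(x)}^2 .
\]
Since $\argmin f\neq\emptyset$, the left-hand side is bounded below by $\min f$, so $\min f\leq f(x)-\tfrac1{2L}\norm{\nabla f(x)}^2$, and rearranging yields $\norm{\nabla f(x)}^2\leq 2L(f(x)-\min f)$. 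As $x$ was arbitrary, the claim follows.

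There is essentially no obstacle here: the only ``choice'' is the step $1/L$, and any reader can check it is the minimizer of the one–dimensional quadratic $t\mapsto -t\norm{\nabla f(x)}^2+\tfrac{L}{2}t^2\norm{\nabla f(x)}^2$. The single point that deserves a word of care is that the argument uses $\min f$ being attained (or at least finite), which is exactly guaranteed by the hypothesis $\argmin f\neq\emptyset$; without it the inequality would be vacuous or false.
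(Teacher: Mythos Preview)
Your proof is correct and follows essentially the same approach as the paper: apply the Descent Lemma with $y=x-\tfrac{1}{L}\nabla f(x)$ and then lower-bound the left-hand side by $\min f$. Your added treatment of the degenerate case $L=0$ and the explanation of why this particular $y$ is the right choice are welcome clarifications but not substantive departures.
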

\begin{proof}
Use Lemma \ref{descent} for an arbitrary $x\in\R^d$ and $y=x-\frac{1}{L}\nabla f(x)$. Then bound 
\[
f\pa{x-\frac{1}{L}\nabla f(x)}\geq \min f.
\]
\end{proof}

 The \textit{subdifferential} of a function $f\in\Gamma_0(\R^d)$ is the set-valued operator $\partial f:\R^d\rightarrow\calP(\R^d)$ such that, for every $x$ in $\R^d$,
\[
\partial f(x)=\{u\in\R^d:f(y)\geq f(x) + \dotp{u}{y-x} \qforallq y\in\R^d\}.
\]
When $f$ is continuous, $\partial f(x)$ is  non-empty convex and compact set for every $x\in \R^d$. If $f$ is differentiable, then $\partial f(x)=\{\nabla f(x)\}$. For every $x\in \R^d$ such that $\partial f(x) \neq \emptyset$, the minimum norm selection of $\partial f(x)$ is the unique element $\partial^0 f(x)\eqdef \argmin_{u\in \partial f(x)}\norm{u}$.

\subsection{On stochastic processes}
Let us recall some elements of stochastic analysis; for a more complete account, we refer to \cite{oksendal_2003,pardoux,mao}. Throughout the paper, $(\Omega,\calF,\PP)$ is a probability space and $\{\calF_t| t\geq 0\}$ is a filtration of the $\sigma-$algebra $\calF$. Given $\mathcal{C}\in\calP(\Omega)$, we will denote $\sigma(\mathcal{C})$ the $\sigma-$algebra generated by $\mathcal{C}$. We denote $\calF_{\infty}\eqdef \sigma\pa{\bigcup_{t\geq 0} \calF_t}\in\calF$.

The expectation of a random variable $\xi:\Omega\rightarrow\R^d$ is denoted by 
\[
\EE(\xi)\eqdef \int_{\Omega}\xi(\omega)d\PP(\omega).
\]
An event $E\in\calF$ happens almost surely if $\PP(E)=1$, and it will be denoted as "$E$, $\PP$-a.s." or simply "$E$, a.s.". The characteristic function of an event $E\in\calF$ is denoted by 
\[
\ind_E(\omega) \eqdef
\begin{cases}
1 & \text{if } \omega\in E,\\
0 & \text{otherwise}.
\end{cases}
\] 
An $\R^d$-valued stochastic process is a function $X:\Omega\times\R_+\rightarrow\R^d$. It is said to be continuous if $X(\omega,\cdot)\in C(\R_+;\R^d)$ for almost all $\omega\in\Omega$. We will denote $X(t)\eqdef X(\cdot,t)$. We are going to study \eqref{CSGD}, and in order to ensure the uniqueness of a solution, we introduce a relation over stochastic processes. Two stochastic processes $X,Y:\Omega\times [0,T]\rightarrow\R^d$ are said to be equivalent if $X(t)=Y(t)$, $\forall t\in [0,T]$, $\PP$-a.s. This leads us to define the equivalence relation $\calR$, which associates the equivalent stochastic processes in the same class. 

\smallskip

Furthermore, we will need some properties about the measurability of these processes. A stochastic process $X:\Omega\times\R_+\rightarrow\R^d$ is progressively measurable if for every $t\geq 0$, the map $\Omega\times[0,t]\rightarrow\R^d$ defined by $(\omega,s)\rightarrow X(\omega,s)$ is $\calF_t\otimes\calB([0,t])$-measurable, where $\otimes$ is the product $\sigma$-algebra and $\calB$ is the Borel $\sigma$-algebra. On the other hand, $X$ is $\calF_t$-adapted if $X(t)$ is $\calF_t$-measurable for every $t\geq 0$. It is a direct consequence of the definition that if $X$ is progressively measurable, then $X$ is $\calF_t$-adapted.

\smallskip

Let us define the quotient space:
\[
S_d^0[0,T] \eqdef \enscond{X:\Omega\times[0,T]\rightarrow\R^d}{X \text{ is a prog. measurable cont. stochastic process}}\Big/\calR.
\]
We set $S_d^0\eqdef \bigcap_{T\geq 0} S_d^0[0,T]$. Furthermore, for $\nu>0$, we define $S_d^{\nu}[0,T]$ as the subset of processes $X(t)$ in $S_d^0[0,T]$ such that 
\[
S_d^{\nu}[0,T]\eqdef \enscond{X\in S_d^0[0,T]}{\EE\pa{\sup_{t\in[0,T]}\norm{X_t}^{\nu}}<+\infty}.
\] 
We define $S_d^{\nu} \eqdef \bigcap_{T\geq 0} S_d^{\nu}[0,T]$.


  
Theorem~\ref{teoexistencia} in the appendix provides us with sufficient conditions to ensure the existence and uniqueness of the solution to \eqref{CSGD}. These conditions are met in our case under assumptions \eqref{H0} and \eqref{H}. 

Let us now present It\^o's formula which plays a central role in the theory of stochastic differential equations.
\begin{proposition}\label{itos}\cite[Chapter~4]{oksendal_2003}
Consider $X$ the solution of \eqref{CSGD}, $\phi: \R_+\times\R^d\rightarrow\R$ such that $\phi(\cdot,x)\in C^1(\R_+)$ for every $x\in\R^d$ and $\phi(t,\cdot)\in C^2(\R^d)$ for every $t\geq 0$. Then the process $$Y(t)=\phi(t,X(t)),$$ is an It\^o Process such that for all $t\geq 0$
\begin{multline}
Y(t)=Y(0)+\int_0^{t} \frac{\partial \phi}{\partial t}(s,X(s)) ds
-\int_0^{t} \dotp{\nabla \phi(s,X(s))}{\nabla f(X(s))} ds\\
+\int_0^{t}\dotp{\sigma^{\top}(s,X(s))\nabla \phi(s,X(s))}{dW(s)}+\frac{1}{2}\int_0^{t} \tr\pa{\sigma(s,X(s))\sigma^{\top}(s,X(s))\nabla^2\phi(s,X(s))}ds.
\end{multline}
Moreover, if for all $T>0$ 
\[
\EE\pa{\int_0^T \norm{\sigma^{\top}(s,X(s))\nabla \phi(s,X(s))}^2 ds}<+\infty,
\]
then $\displaystyle\int_0^{t}\dotp{\sigma^{\top}(s,X(s))\nabla \phi(s,X(s))}{dW(s)}$ is a square-integrable continuous martingale and
\begin{multline}
\EE[Y(t)]=Y(0)+\EE\pa{\int_0^{t} \frac{\partial \phi}{\partial t}(s,X(s)) ds}-\EE\pa{\int_0^{t} \dotp{\nabla \phi(s,X(s))}{\nabla f(X(s))} ds}\\
+\frac{1}{2}\EE\pa{\int_0^{t} \tr\pa{\sigma(s,X(s))\sigma^{\top}(s,X(s))\nabla^2\phi(s,X(s))}ds}.
\end{multline}

\end{proposition}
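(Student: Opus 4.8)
Since Proposition~\ref{itos} is a specialization of the classical multidimensional It\^o formula \cite[Chapter~4]{oksendal_2003}, the plan is to invoke that general result for the pair $(Y,\phi)$ and then discard the martingale part in expectation. First I would note that, under \eqref{H0} and \eqref{H}, Theorem~\ref{teoexistencia} ensures \eqref{CSGD} has a solution $X$ (unique up to $\calR$) which is a genuine It\^o process with drift $b(t,x)=-\nabla f(x)$ and diffusion matrix $\sigma(t,x)$; in particular its quadratic covariation satisfies $d[X_i,X_j](t)=\big(\sigma(t,X(t))\sigma^\top(t,X(t))\big)_{ij}\,dt$. Because $\phi(\cdot,x)\in C^1(\R_+)$ and $\phi(t,\cdot)\in C^2(\R^d)$, the general It\^o formula applies to $Y(t)=\phi(t,X(t))$ and gives, in differential form,
\begin{equation*}
dY(t)=\frac{\partial\phi}{\partial t}(t,X(t))\,dt+\dotp{\nabla\phi(t,X(t))}{dX(t)}+\frac12\sum_{i,j}\partial^2_{ij}\phi(t,X(t))\,d[X_i,X_j](t).
\end{equation*}

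Next I would substitute $dX(t)=-\nabla f(X(t))\,dt+\sigma(t,X(t))\,dW(t)$, rewrite $\dotp{\nabla\phi}{\sigma\,dW}=\dotp{\sigma^\top\nabla\phi}{dW}$, and collapse the second-order sum into $\sum_{i,j}(\sigma\sigma^\top)_{ij}\partial^2_{ij}\phi=\tr(\sigma\sigma^\top\nabla^2\phi)$. Integrating from $0$ to $t$ then produces the first displayed identity verbatim. For the second identity I would use the standard fact that an It\^o integral $\int_0^\cdot\dotp{H(s)}{dW(s)}$ with $\EE\int_0^T\norm{H(s)}^2\,ds<\infty$ for every $T>0$ is a square-integrable continuous martingale, and apply it with $H(s)=\sigma^\top(s,X(s))\nabla\phi(s,X(s))$ under the stated integrability hypothesis; hence the stochastic term has zero expectation at every $t$. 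Taking expectations in the first identity and exchanging $\EE$ with the pathwise Lebesgue time-integrals by Tonelli/Fubini (the integrands being progressively measurable, inheriting this from continuity and adaptedness of $X$, and integrable under the hypotheses) yields the second identity.

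The only step requiring genuine care is the passage from \emph{local martingale} to \emph{martingale} for the stochastic integral: this is precisely what the $L^2$-in-time assumption $\EE\int_0^T\norm{\sigma^\top(s,X(s))\nabla\phi(s,X(s))}^2\,ds<\infty$ secures, and it is the reason this hypothesis is singled out. The remainder is bookkeeping — plugging in the coefficients of \eqref{CSGD} and simplifying the trace term.
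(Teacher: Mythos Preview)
Your proposal is correct: this is exactly the standard derivation of It\^o's formula specialized to the drift $-\nabla f(X(t))$ and diffusion $\sigma(t,X(t))$ of \eqref{CSGD}, followed by the usual $L^2$-criterion to upgrade the stochastic integral from a local martingale to a square-integrable martingale with zero mean. The paper itself does not give a proof of Proposition~\ref{itos}; it is simply stated with a citation to \cite[Chapter~4]{oksendal_2003}, so there is nothing to compare against beyond noting that your sketch faithfully unpacks that reference.
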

The $C^2$ assumption on $\phi(t,\cdot)$ in It\^o's formula is crucial. This can be weakened in certain cases leading to the following inequality that will be useful in our context.
\begin{proposition}\label{itoin}
Consider $X$ the solution of \eqref{CSGD}, $\phi_1\in C^1(\R_+)$, $\phi_2\in C_L^{1,1}(\R^d)$ and $\phi(t,x)=\phi_1(t)\phi_2(x)$. Then the process 
\[
Y(t)=\phi(t,X(t))=\phi_1(t)\phi_2(X(t)),
\] 
is an It\^o Process such that
\begin{multline}
Y(t) \leq Y(0)+\int_0^t \phi_1'(s)\phi_2(X(s)) ds - \int_0^{t} \phi_1(s)\dotp{\nabla \phi_2(X(s))}{\nabla f(X(s))} ds\\
+\int_0^{t}\dotp{\sigma^{\top}(s,X(s))\phi_1(s)\nabla \phi_2(X(s))}{dW(s)}
+\frac{L}{2}\int_0^{t} \phi_1(s)\tr\pa{\sigma(s,X(s))\sigma^{\top}(s,X(s))}ds.
\end{multline}

Moreover, if for all $T>0$ 
\[
\EE\pa{\int_0^T \norm{\sigma^{\top}(s,X(s))\phi_1(s)\nabla \phi_2(X(s))}^2 ds}<+\infty,
\]
then  
\begin{multline}
\EE[Y(t)]\leq Y(0)+\EE\pa{\int_0^t \phi_1'(s)\phi_2(X(s)) ds} - \EE\pa{\int_0^{t} \phi_1(s)\dotp{\nabla \phi_2(X(s))}{\nabla f(X(s))} ds}\\
+\frac{L}{2}\EE\pa{\int_0^{t} \phi_1(s)\tr\pa{\sigma(s,X(s))\sigma^{\top}(s,X(s))}ds}.
\end{multline}
\end{proposition}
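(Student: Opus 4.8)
The plan is to approximate the non-smooth situation of Proposition~\ref{itoin} by a smooth one where the full It\^o formula of Proposition~\ref{itos} applies, and then pass to the limit. The obstruction to applying Proposition~\ref{itos} directly is that $\phi_2\in C^{1,1}_L(\R^d)$ need not be twice differentiable, so the Hessian term $\nabla^2\phi(t,x) = \phi_1(t)\nabla^2\phi_2(x)$ is not defined. However, the Lipschitz bound on $\nabla\phi_2$ gives, in the weak (distributional) sense, $\nabla^2\phi_2 \preccurlyeq L I_d$, which is exactly why we expect the clean It\^o \emph{inequality} with $\frac{L}{2}\tr(\sigma\sigma^{\top})$ in place of $\frac{1}{2}\tr(\sigma\sigma^{\top}\nabla^2\phi_2)$.

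First I would mollify: let $(\rho_\eps)_{\eps>0}$ be a standard smooth mollifier on $\R^d$ and set $\phi_2^\eps \eqdef \phi_2 * \rho_\eps \in C^\infty(\R^d)$. Standard properties of mollification give that $\phi_2^\eps \to \phi_2$ and $\nabla\phi_2^\eps \to \nabla\phi_2$ locally uniformly on $\R^d$ as $\eps\to 0$, that $\nabla\phi_2^\eps$ is again $L$-Lipschitz (hence $\norm{\nabla\phi_2^\eps(x)}$ grows at most linearly, uniformly in $\eps$), and crucially that $\nabla^2\phi_2^\eps(x) \preccurlyeq L I_d$ for every $x$ and every $\eps$ — this last point follows because convolving the (weakly) $L$-bounded Hessian with a nonnegative kernel of unit mass preserves the bound $u^\top \nabla^2\phi_2^\eps(x) u \le L\norm{u}^2$. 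Now set $\phi^\eps(t,x) \eqdef \phi_1(t)\phi_2^\eps(x)$; since $\phi_1\in C^1(\R_+)$ and $\phi_2^\eps\in C^2(\R^d)$, Proposition~\ref{itos} applies verbatim to $Y^\eps(t) \eqdef \phi^\eps(t,X(t))$, yielding the exact It\^o identity with integrand $\frac{\partial\phi^\eps}{\partial t} = \phi_1'\phi_2^\eps$, drift term $\phi_1\dotp{\nabla\phi_2^\eps}{\nabla f}$, the martingale term with $\sigma^\top \phi_1\nabla\phi_2^\eps$, and Hessian term $\frac12\phi_1\tr(\sigma\sigma^\top\nabla^2\phi_2^\eps)$.

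Next I would bound the Hessian term: since $\sigma\sigma^\top$ is symmetric positive semidefinite and $\nabla^2\phi_2^\eps(x)\preccurlyeq L I_d$, we have the trace inequality $\tr(\sigma(s,X(s))\sigma^\top(s,X(s))\nabla^2\phi_2^\eps(X(s))) \le L\tr(\sigma(s,X(s))\sigma^\top(s,X(s)))$ pointwise (writing $\sigma\sigma^\top = A A^\top$ and using $\tr(A A^\top B) = \sum_j A_j^\top B A_j \le L\sum_j \norm{A_j}^2 = L\tr(AA^\top)$). This turns the $\eps$-identity into the $\eps$-version of the claimed inequality, for every fixed $t\ge 0$, $\PP$-a.s. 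It then remains to let $\eps\to 0$. Here one has to be slightly careful since convergence is only local; the clean way is to argue pathwise: on the (a.s.) event that $s\mapsto X(\omega,s)$ is continuous, the image $X(\omega,[0,t])$ is compact, so $\phi_2^\eps\to\phi_2$ and $\nabla\phi_2^\eps\to\nabla\phi_2$ uniformly on that compact set, which gives convergence of $Y^\eps(\omega,t)\to Y(\omega,t)$, of the two Lebesgue integrals (dominated convergence on the finite interval $[0,t]$, the integrands being uniformly bounded in $\eps$ by continuity of $\nabla f$, $\sigma$, $\phi_1$ on the compact set), and of the It\^o integral (using the It\^o isometry together with the uniform-on-compacts convergence $\sigma^\top\phi_1\nabla\phi_2^\eps \to \sigma^\top\phi_1\nabla\phi_2$ and the uniform linear growth of $\nabla\phi_2^\eps$, so that a subsequence converges $\PP$-a.s. after extracting). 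Passing to the limit in the $\eps$-inequality yields the pathwise inequality of the statement.

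Finally, for the inequality in expectation: under the stated hypothesis $\EE\big(\int_0^T \norm{\sigma^\top(s,X(s))\phi_1(s)\nabla\phi_2(X(s))}^2\,ds\big)<+\infty$, the same estimate holds for $\nabla\phi_2^\eps$ uniformly in $\eps$ (again by $L$-Lipschitzness and the pointwise bound $\norm{\nabla\phi_2^\eps(x)} \le \norm{\nabla\phi_2(x)} + L\cdot\mathrm{diam}(\mathrm{supp}\,\rho_1)\eps$, say), so the martingale term in the $\eps$-identity is a genuine square-integrable martingale with zero expectation; taking expectations in the pathwise $\eps$-inequality kills it, and then dominated convergence (with a dominating function built from the square-integrability hypothesis and linear growth, all on the finite interval $[0,t]$) lets $\eps\to 0$ in expectation. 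I expect the main obstacle to be precisely the justification of the passage to the limit in the stochastic integral under only local uniform convergence — this is where one must combine the It\^o isometry, the uniform linear growth of $\nabla\phi_2^\eps$, and an almost-sure-subsequence argument, rather than anything more delicate; everything else is routine mollification and the elementary trace inequality.
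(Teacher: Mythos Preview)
Your mollification approach is correct and is the standard route for extending It\^o's formula to $C^{1,1}$ test functions; the paper itself gives no argument beyond a reference to \cite[Proposition~C.2]{mertikopoulos_staudigl_2018}, which proceeds in the same way. One point you should make explicit: passing from the trace inequality $\tr(\sigma\sigma^\top\nabla^2\phi_2^\eps)\le L\,\tr(\sigma\sigma^\top)$ to the $\eps$-version of the claimed bound requires $\phi_1(s)\ge 0$, since multiplying by a negative $\phi_1(s)$ would reverse the inequality; this nonnegativity is not written in the hypotheses but holds in every application in the paper (either $\phi_1\equiv 1$ or $\phi_1(t)=t$), so it is a missing assumption in the statement rather than a flaw in your strategy.
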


\proof{Proof.}
Analogous to the proof of \cite[Proposition~C.2]{mertikopoulos_staudigl_2018}.
\endproof

\section{Convergence properties for convex differentiable functions}\label{sec:smooth}
We consider $f$ (called the potential) and study the dynamic \eqref{CSGD} under hypotheses \eqref{H0} (\ie $f \in C_L^{1,1}(\R^d) \cap \Gamma_0(\R^d)$) and \eqref{H}. Recall the definitions of $\sigma_*$ and $\sigma_{\infty}(t)$ from \eqref{eq:defsigstar}. Observe that from \eqref{H} one can take $\sigma_*^2=md\sup_{t \geq 0,x \in \R^d} |\sigma_{ik}(t,x)|^2$. Throughout the rest of the paper, we will use the shorthand notation
\[
\Sigma(t,x) \eqdef \sigma(t,x)\sigma(t,x)^{\top} . 
\]

\subsection{Almost sure convergence of trajectory}

Our first main result establish almost convergence of $X(t)$ to an $\calS$-valued random variable as $t \to +\infty$.

\begin{theorem}\label{converge2}
Consider the dynamic \eqref{CSGD} where $f$ and $\sigma$ satisfy the assumptions \eqref{H0} and \eqref{H}. Then, there exists a unique solution $X\in S_d^{\nu}$ of \eqref{CSGD}, for every $\nu\geq 2$.  Additionally, if $\sigma_{\infty}\in \Lp^2(\R_+)$, then: 
\begin{enumerate}[label=(\roman*)]
\item \label{acota} $\sup_{t\geq 0}\EE[\norm{ X(t)}^2]<+\infty$.
\vspace{1mm}
\item  $\forall x^{\star}\in \calS$, $\lim_{t\rightarrow\infty} \norm{ X(t)-x^{\star}}$ exists a.s. and $\sup_{t\geq 0}\norm{ X(t)}<+ \infty$ a.s.
\vspace{1mm}
\item \label{iiconv} 
$\lim_{t\rightarrow \infty}\norm{\nabla f(X(t))}=0$ a.s.  
As a result, $\lim_{t\rightarrow \infty} f(X(t))=\min f$ a.s.
\vspace{1mm}
\item  In addition to \ref{iiconv}, there exists an $\calS$-valued random variable $x^{\star}$ such that $\lim_{t\rightarrow\infty} X(t) = x^{\star}$ a.s.
 \end{enumerate}
\end{theorem}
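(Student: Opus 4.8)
The plan is to follow the classical template for almost-sure convergence of gradient-type flows (the Baillon--Bruck--Brezis argument), adapted to the stochastic setting via a stochastic Lyapunov / quasi-martingale approach. The key tool is It\^o's formula (Proposition~\ref{itos}, or the inequality version Proposition~\ref{itoin}) applied to the ``anchor'' function $\phi(t,x) = \tfrac12\norm{x-x^\star}^2$ for a fixed $x^\star\in\calS$, together with the nonnegative supermartingale convergence theorem and Robbins--Siegmund's lemma.

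First, I would establish well-posedness and the moment bound: existence and uniqueness of $X\in S_d^\nu$ for $\nu\geq 2$ follows from Theorem~\ref{teoexistencia} in the appendix under \eqref{H0} and \eqref{H} (linear growth and local Lipschitz of the drift $-\nabla f$ from $L$-Lipschitzness, and of $\sigma$ from \eqref{H}). For \ref{acota}, apply It\^o to $\norm{X(t)-x^\star}^2$, use convexity of $f$ to get $\dotp{\nabla f(X(s))}{X(s)-x^\star}\geq 0$ (since $\nabla f(x^\star)=0$ and $f(X(s))\geq\min f$), and bound the trace term by $\sigma_\infty^2(s)$; since $\sigma_\infty\in L^2(\R_+)$, a Gr\"onwall-type estimate yields $\sup_{t\geq0}\EE[\norm{X(t)-x^\star}^2]<+\infty$. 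For (ii), the same It\^o expansion gives, after taking expectations or working pathwise,
\begin{equation*}
\tfrac12\norm{X(t)-x^\star}^2 + \int_0^t \dotp{\nabla f(X(s))}{X(s)-x^\star}\,ds = \tfrac12\norm{X_0-x^\star}^2 + M(t) + \tfrac12\int_0^t \tr(\Sigma(s,X(s)))\,ds,
\end{equation*}
where $M(t)$ is a local martingale whose quadratic variation is controlled by $\int_0^t \sigma_\infty^2(s)\norm{X(s)-x^\star}^2\,ds$; using \ref{acota} this is an honest $L^2$-martingale, so $M(t)$ converges a.s. The integral $\int_0^\infty\tr(\Sigma(s,X(s)))\,ds\leq\int_0^\infty\sigma_\infty^2(s)\,ds<\infty$. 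Then the Robbins--Siegmund theorem (or direct supermartingale argument on $\tfrac12\norm{X(t)-x^\star}^2 - M(t) - \tfrac12\int_0^t\tr\Sigma\,ds$) gives that $\lim_{t\to\infty}\norm{X(t)-x^\star}$ exists a.s.\ and that $\int_0^\infty \dotp{\nabla f(X(s))}{X(s)-x^\star}\,ds<\infty$ a.s. In particular $\sup_t\norm{X(t)}<\infty$ a.s.

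For \ref{iiconv}, I would apply It\^o's inequality (Proposition~\ref{itoin}) to $\phi_2 = f - \min f \in C_L^{1,1}$ with $\phi_1\equiv 1$: this gives $f(X(t))-\min f + \int_0^t\norm{\nabla f(X(s))}^2\,ds \leq f(X_0)-\min f + \widetilde M(t) + \tfrac L2\int_0^t\tr\Sigma(s,X(s))\,ds$. Again $\widetilde M(t)$ converges a.s.\ (its bracket is controlled by $\int_0^\infty\sigma_\infty^2(s)\norm{\nabla f(X(s))}^2\,ds$, and $\norm{\nabla f(X(s))}^2\leq 2L(f(X(s))-\min f)$ by Corollary~\ref{2L}, combined with a bound on $\sup_t f(X(t))$ obtained from the supermartingale structure), and the trace integral is finite, so by Robbins--Siegmund $f(X(t))-\min f$ converges a.s.\ and $\int_0^\infty\norm{\nabla f(X(s))}^2\,ds<\infty$ a.s. To upgrade the integrability $\int_0^\infty\norm{\nabla f(X(s))}^2\,ds<\infty$ to the pointwise limit $\norm{\nabla f(X(t))}\to 0$, I would show $t\mapsto\norm{\nabla f(X(t))}^2$ cannot fail to vanish: argue that $f(X(t))-\min f$ has an a.s.\ limit which must then be $0$ (if the limit were a positive constant $c$, then by convexity and coercivity-along-the-path one derives $\norm{\nabla f(X(t))}^2$ bounded below along a sequence, contradicting integrability; alternatively use that $f(X(t))-\min f \to$ limit and the decomposition forces it to equal $\liminf$, which is $0$ since the running integral of $\norm{\nabla f}^2$ converges while $\norm{X(t)}$ stays bounded, so $\liminf\norm{\nabla f(X(t))}=0$, and combined with Corollary~\ref{2L} the limit of $f(X(t))-\min f$ is $0$); hence $\norm{\nabla f(X(t))}\to 0$ a.s.\ again via Corollary~\ref{2L}. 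Finally, for (iv): on the a.s.\ event where $\lim_t\norm{X(t)-x^\star}$ exists for every $x^\star$ in a countable dense subset of $\calS$ and where $f(X(t))\to\min f$, the trajectory is bounded so it has cluster points; any cluster point $\bar x$ satisfies $f(\bar x)=\min f$ by continuity, hence $\bar x\in\calS$; then applying the existence of $\lim_t\norm{X(t)-\bar x}$ (extending to all of $\calS$ by density and continuity of $x^\star\mapsto\norm{x-x^\star}$, plus closedness/convexity of $\calS$) and Opial's lemma forces uniqueness of the cluster point, so $X(t)$ converges a.s.\ to an $\calS$-valued random variable $x^\star$.

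The main obstacle I anticipate is the careful handling of the stochastic integral terms: one must verify that the local martingales are genuine martingales (or at least that they converge a.s.) despite the a priori unbounded trajectory, which requires the moment bound \ref{acota} and the square-integrability $\sigma_\infty\in L^2(\R_+)$ working in tandem — a localization/stopping-time argument combined with Fatou and the $L^2$ bound is the cleanest route. A secondary delicate point is that Opial's lemma must be applied pathwise on a full-probability event, so one needs the ``$\lim_t\norm{X(t)-x^\star}$ exists'' statement simultaneously for all $x^\star\in\calS$; this is handled by taking a countable dense subset of $\calS$, getting the limits on a countable intersection of full-measure events, and then extending to all of $\calS$ by uniform continuity of $x^\star \mapsto \norm{X(t)-x^\star}$ on bounded sets together with boundedness of the trajectory.
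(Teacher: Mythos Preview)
Your overall architecture matches the paper's: It\^o on the anchor $\tfrac12\|X(t)-x^\star\|^2$, then Theorem~\ref{impp} (the stochastic Robbins--Siegmund lemma) for (i)--(ii), a countable-dense-subset argument to upgrade to all $x^\star\in\calS$ simultaneously, and Opial for (iv). The substantive difference is in (iii).

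The paper does \emph{not} apply Robbins--Siegmund to $f(X(t))-\min f$. Instead it extracts $\int_0^\infty\|\nabla f(X(s))\|^2\,ds<\infty$ from the anchor analysis (convexity gives $\int(f(X)-\min f)<\infty$, then Corollary~\ref{2L}), and upgrades this to $\|\nabla f(X(t))\|\to 0$ by a Barbalat-type contradiction: if $\|\nabla f(X(t_k))\|>\delta$ along a well-separated sequence, then the SDE structure together with the a.s.\ convergence of the stochastic integral $\int_0^t\sigma\,dW$ (Theorem~\ref{doob}) forces $\|X(t)-X(t_k)\|$ to be small on intervals of fixed length, hence $\|\nabla f(X(t))\|>\delta/2$ there, contradicting square-integrability. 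Only then does convexity plus boundedness give $f(X(t))\to\min f$.

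Your route --- Robbins--Siegmund on $f(X(t))-\min f$ to get its convergence, then a subsequence argument plus Corollary~\ref{2L} --- is conceptually cleaner, but has a gap as written. Proposition~\ref{itoin} gives only an \emph{inequality} (since $f\in C_L^{1,1}$ but not $C^2$), so you obtain $f(X(t))-\min f \le \xi + A_t - U_t + M_t$, not equality. Theorem~\ref{impp} applied to the right-hand side yields $\int\|\nabla f\|^2<\infty$, but \emph{not} convergence of $f(X(t))-\min f$ itself: nothing prevents it from oscillating below a convergent process. Your downstream argument (liminf is $\min f$, hence limit is $\min f$) presupposes the limit exists.

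The fix is easy and in fact shortens the whole proof: once you have (ii) and $\int_0^\infty(f(X(s))-\min f)\,ds<\infty$ from the anchor, take $t_k$ with $f(X(t_k))\to\min f$, extract a cluster point $\bar x$ of the bounded trajectory, so $\bar x\in\calS$; then the existence of $\lim_t\|X(t)-\bar x\|$ (from (ii)) together with $\|X(t_{k_j})-\bar x\|\to 0$ forces $X(t)\to\bar x$. Claims (iii) and (iv) then follow simultaneously by continuity of $f$ and $\nabla f$. This bypasses both your Robbins--Siegmund-on-$f$ step and the paper's Barbalat construction.
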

\begin{proof}
The existence and uniqueness of a solution follows directly from the fact that the conditions of Theorem~\ref{teoexistencia} are satisfied under \eqref{H0} and \eqref{H}. The architecture of the proof of Theorem~\ref{converge2} consists of three steps that we briefly describe:
\begin{itemize}
\item The first step is based on It\^o's formula (Proposition~\ref{itos}).
Theorem \ref{impp} then allows us to conclude that for all $x^{\star}\in \calS$, $\lim_{t\rightarrow\infty} \norm{ X(t)-x^{\star}}$ exists a.s. Then, a separability argument is used to conclude that almost surely, for every $x^{\star}\in \calS$, $\lim_{t\rightarrow\infty} \norm{ X(t)-x^{\star}}$ exists. \\
\item  The second step consists in using another conclusion of Theorem \ref{impp} to conclude that $\norm{\nabla f(X(\cdot))}^2\in \Lp^1(\R_+)$ 
 a.s. After proving that this function is eventually uniformly continuous, we proceed according to Barbalat's Lemma  (see \cite{barbalat}) to conclude that $\lim_{t\rightarrow\infty}\norm{\nabla f(X(t))}=0$ a.s. As a consequence of the convexity of $f$ we deduce that $\lim_{t\rightarrow\infty}f(X(t))=\min f$ a.s.\\
\item Finally, the third step consists in using Opial's Lemma to conclude that there exists an $\calS$-valued random variable $x^{\star}$ such that $\lim_{t\rightarrow\infty} X(t)= x^{\star}$ a.s.
\end{itemize}

\begin{enumerate}[label=(\roman*)]
\item Let $x^{\star}$ be taken arbitrarily in $\calS$. Let us define the corresponding anchor function $\phi(x)=\frac{\norm{x-x^{\star}}^2}{2}$. Using It\^o's formula we obtain
\begin{align}
\phi(X(t))&=\underbrace{\frac{\norm{X_0-x^{\star}}^2}{2}}_{\xi}+\underbrace{\frac{1}{2}\int_0^t \tr\pa{\Sigma(s,X(s))}ds}_{A_t}-\underbrace{\int_0^t \dotp{\nabla f(X(s))}{X(s)-x^{\star}} ds}_{U_t} \nonumber \\
&+\underbrace{\int_0^t \dotp{\sigma^{\top}(s,X(s))\pa{X(s)-x^{\star}}}{dW(s)}}_{M_t}. 
\label{basic_Ito1}
\end{align}
Since $X\in S_d^2$ by Proposition~\ref{itos}, we have for every $T>0$, that 
\[
\EE\pa{\int_0^T\norm{\sigma^{\top}(s,X(s))\pa{X(s)-x^{\star}}}^2ds} \leq \EE\pa{\sup_{t\in [0,T]}\norm{ X(t)-x^{\star}}^2}\int_0^T \sigma_{\infty}^2(s)ds<+\infty.
\]
Therefore $M_t$ is a square-integrable continuous martingale. It is also  a continuous local martingale (see \cite[Theorem 1.3.3]{mao}), which implies that $\EE(M_t)=0$.

Let us now take the expectation of \eqref{basic_Ito1}. Using that 
\[
0\leq\tr\pa{\Sigma(s,X(s))} \leq \sigma_{\infty}^2(s) \qandq \langle \nabla f(X(s)),X(s)-x^{\star}\rangle\geq 0,
\] 
and  taking the supremum over $t\geq 0$, we obtain that
\[
\sup_{t\geq 0}\EE\pa{\frac{\norm{ X(t)-x^{\star}}^2}{2}}\leq \frac{\norm{ X_0-x^{\star}}^2}{2}+\frac{1}{2}\int_0^{\infty} \sigma_{\infty}^2(s)ds< +\infty.
\]
This shows the first claim.

\smallskip 
        
\item  $A_t$ and $U_t$ are two continuous adapted increasing processes with $A_0=U_0=0$ a.s.
Since $\phi(X(t))$ is nonnegative and $\sup_{x\in\R^d}\norm{ \sigma(\cdot,x)}_F\in \Lp^2(\R_+)$, we deduce that  $\lim_{t\rightarrow\infty}A_t< +\infty$. Then, we can use Theorem \ref{impp} to conclude that
\begin{equation}\label{ecconv}
\int_0^{\infty} \langle \nabla f(X(s)),X(s)-x^{\star}\rangle ds< +\infty \quad a.s.
\end{equation}
and
\begin{equation}\label{xkconv}
\forall x^{\star}\in \calS, \exists \Omega_{x^{\star}}\in\calF, \text{such that } \Pro(\Omega_{x^{\star}})=1 \text{ and } \lim_{t\rightarrow\infty}\norm{ X(\omega,t)-x^{\star}} \text{ exists } \forall \omega\in \Omega_{x^{\star}}.
\end{equation}
Since $\R^d$ is separable, there exists a countable set $Z\subseteq S$, such that $\cl(Z)=S$. Let $\tilde{\Omega}=\bigcap_{z\in Z}\Omega_z$. Since $Z$ is countable 
\[
\PP(\tilde{\Omega})=1-\PP\pa{\bigcup_{z\in Z}\Omega_z^c}\geq 1-\sum_{z\in Z}\PP(\Omega_z^c) = 1.
\]
For arbitrary $x^{\star}\in \calS$, there exists a sequence $(z_k)_{k\in\N}\subseteq Z$ such that $z_k\rightarrow x^{\star}$.
In view of \eqref{xkconv}, for every $k\in\N$ there exists $\tau_k:\Omega_{z_k}\rightarrow\R_+$ such that
\[ 
\lim_{t\rightarrow\infty}\norm{ X(\omega,t)-z_k}=\tau_k(\omega), \quad \forall\omega\in\Omega_{z_k}.
\]
Moreover, $\lim_{k\rightarrow\infty}\tau_k(\omega)$ exists since $(z_k)_{k\in\N}$ is convergent. 
Now, let $\omega \in \tilde{\Omega}$. Using the triangle inequality, we obtain that 
\[
\left|\norm{ X(\omega,t)-z_k}-\norm{ X(\omega,t)-x^{\star} }\right|\leq\norm{ z_k-x^{\star}}.
\]
Taking $\limsup_{t\rightarrow\infty}$ over the previous inequality, we conclude that 
\[
\abs{\tau_k(\omega)-\limsup_{t\rightarrow\infty}\norm{ X(\omega,t)-x^{\star}}}\leq\norm{ z_k-x^{\star}}.
\]
A similar conclusion holds for the $\liminf_{t\rightarrow\infty}$. Then, taking the limit over $k$, we deduce
\[
\lim_{t\rightarrow\infty}\norm{ X(\omega,t)-x^{\star} }=\lim_{k\rightarrow\infty}\tau_{k}(\omega),\quad\forall \omega\in\tilde{\Omega} ,
\]
whence we obtain that the previous limit exists on a set of probability $1$ independently of $x^{\star}$.
    
\smallskip
    
Let us recall that there exists $\Omega_c\in\calF$  such that $\Pro(\Omega_c)=1$ and $X(\omega,\cdot)$ is continuous for every $\omega\in\Omega_c$. Now let $x^{\star}\in \calS$ arbitrary, since the limit exists, for every $\omega\in\tilde{\Omega}\cap\Omega_c$ there exists $T(\omega)$ such that $\norm{ X(\omega,t)-x^{\star}}\leq 1$ for every $t\geq T(\omega)$. Besides, since $X(\omega,\cdot)$ is continuous, by Bolzano's theorem $\sup_{t\in [0,T(\omega)]}\norm{ X(\omega,t)}=\max_{t\in [0,T(\omega)]}\norm{ X(\omega,t)} \eqdef h(\omega) <+\infty$. Therefore,  $\sup_{t\geq 0}\norm{ X(t)}<\max\{h(\omega),1+\norm{ x^{\star}}\}<\infty$.
    
\smallskip
    
\item By convexity of $f$ and \eqref{ecconv}, we have that there exists $\Omega_f\in\calF$ such that $\Pro(\Omega_f)=1$ and $f(X(\omega,\cdot))-\min f\in \Lp^1(\R_+)$ for every $\omega\in\Omega_f$. By Corollary~\ref{2L}, we obtain that $\norm{\nabla f(X(\omega,\cdot))}\in\Lp^2(\R_+)$ for every $\omega\in\Omega_f$. Let $\omega\in\Omega_f$ arbitrary, then $\liminf_{t\rightarrow\infty} \norm{ \nabla f(X(\omega,t))}=0$. If $\limsup_{t\rightarrow\infty} \norm{ \nabla f(X(\omega,t))}=0$ then we conclude. Suppose by contradiction that $\limsup_{t\rightarrow\infty} \norm{ f(X(\omega,t))}>0$. Then, by Lemma~\ref{existenceof}, there exists $\delta>0$ satisfying 
\[
0=\liminf_{t\rightarrow\infty} \norm{\nabla f(X(\omega,t))}<\delta<\limsup_{t\rightarrow\infty} \norm{\nabla f(X(\omega,t))},
\] 
and there exists $(t_k)_{k\in\N}\subset \R_+$ such that $\lim_{k\rightarrow\infty} t_k=\infty$, 
\[ 
\norm{\nabla f(X(\omega,t_k))}>\delta \qandq t_{k+1}-t_k>1, \quad \forall k\in\N.
\]
Let $M_t=\displaystyle\int_0^t \sigma(s,X(s))dW(s)$. This is a continuous martingale (w.r.t. the filtration $\calF_t$), which verifies
\[
\EE(|M_t|^2)=\EE\pa{\int_0^t \norm{\sigma(s,X(s))}_F^2ds}\leq \EE\pa{\int_0^{\infty} \sigma_{\infty}^2(s)ds}<\infty, \forall t\geq 0.
\]
According to Theorem \ref{doob}, we deduce that there exists a random variable $M_{\infty}$ w.r.t. $\calF_{\infty}$, and which verifies: $\EE(|M_{\infty}|^2)<+\infty$, and there exists $\Omega_M\in\calF$ such that $\Pro(\Omega_M)=1$ and 
\[
\lim_{t\rightarrow\infty}M_t(\omega)= M_{\infty}(\omega) \mbox{ for every } \omega\in\Omega_M .
\]
Let $\Omega_{\mathrm{conv}} \eqdef \tilde{\Omega}\cap\Omega_c\cap\Omega_f\cap\Omega_M$, hence $\Pro(\Omega_{\mathrm{conv}})=1$. Take any $\omega_0\in\Omega_{\mathrm{conv}}$. We allow ourselves the abuse of notation $X(t)\eqdef X(\omega_0,t)$ during the rest of the proof from this point.
 
\smallskip

Let $\varepsilon\in \left]0,\min\pa{\frac{\delta^2}{4L^2},L}\right[$. Note that $([t_k,t_k+\frac{\varepsilon}{2L}]:k\in\N)$ are disjoint intervals. On the other hand, according to the convergence property of $M_t$ and the fact that $\norm{\nabla f(X(\cdot))}\in \Lp^2(\R_+)$, there exists $k'>0$ such that for every $k\geq k'$
\[
\sup_{t\geq t_k}|M_t-M_{t_k}|^2<\frac{\varepsilon}{4} \qandq \int_{t_k}^{\infty}\norm{\nabla f (X(s))}^2ds\leq \frac{L}{2}.
\]
Besides, for every $k\geq k'$, $t\in [t_k,t_k+\frac{\varepsilon}{2L}]$
\[
\norm{ X(t)-X(t_k)}^2\leq 2(t-t_k)\int_{t_k}^t\norm{\nabla f(X(s))}^2 ds+2|M_t-M_{t_k}|^2\leq 2(t-t_k)\frac{L}{2}+\frac{\varepsilon}{2}\leq \varepsilon.
\]
Since $f\in C_L^{1,1}(\R^d)$, we have that for every $k\geq k'$ and $t\in [t_k,t_k+\frac{\varepsilon}{2L}]$
\[
\norm{ \nabla f(X(t))-\nabla f(X(t_k))}^2\leq L^2\norm{ X(t)-X(t_k)}^2\leq \pa{\frac{\delta}{2}}^2.
\]
Therefore, for every $k\geq k'$, $t\in [t_k,t_k+\frac{\varepsilon}{2L}]$
\[
\norm{\nabla f(X(t))}\geq  \norm{ \nabla f(X(t_k))}-\underbrace{\norm{ \nabla f(X(t))- \nabla f(X(t_k))}}_{\leq \frac{\delta}{2}}\geq \frac{\delta}{2}.
\]
Finally, 
\[
\int_0^{\infty} \norm{ \nabla f(X(s))}^2 ds\geq \sum_{k\geq k'}\int_{t_k}^{t_k+\frac{\varepsilon}{2L}} \norm{ \nabla f(X(s))}^2 ds\geq\sum_{k\geq k'}\frac{\delta^2\varepsilon}{8L}=\infty ,
\]
which contradicts $\norm{\nabla f(X(\cdot))}\in \Lp^2(\R_+)$. So, 
\[
\limsup_{t\rightarrow\infty} \norm{\nabla f(X(\omega,t))}=\liminf_{t\rightarrow\infty} \norm{\nabla f(X(\omega, t))} =\lim_{t\rightarrow\infty} \norm{\nabla f(X(\omega,t))}=0,\quad \forall\omega\in \Omega_{\mathrm{conv}}.
\]
Let $x^{\star}\in \calS$ and $\omega\in\Omega_{\mathrm{conv}}$ taken arbitrary. By convexity and Cauchy-Schwarz inequality:
\[
0\leq f(X(\omega,t))-\min f\leq \norm{\nabla f(X(\omega,t))} \norm{ X(\omega,t)-x^{\star}}.
\]
The claim then follows as we have already obtained that $\lim_{t\rightarrow 0}\norm{ X(\omega,t)-x^{\star}}$ exists, and 
\[\lim_{t\rightarrow \infty}\norm{\nabla f(X(\omega,t))}=0.\]
 
\item Let $\omega\in\Omega_{\mathrm{conv}}$ and $\bar{x}(\omega)$ be a sequential limit point of $X(\omega,t)$. Equivalently, there exists an increasing sequence $ (t_k)_{k\in\N}\subset \R_+$ such that $\lim_{k\rightarrow\infty} t_k=\infty$ and 
\[
\lim_{k\rightarrow\infty} X(\omega, t_k) = \bar{x}(\omega).
\]
Since $\lim_{t\rightarrow\infty} f(X(\omega,t))=\min f$ and by continuity of $f$, we obtain directly that $\bar{x}(\omega)\in \calS$. Finally by Opial's Lemma (see \cite{opial}) we conclude that there exists $x^{\star}(\omega)\in \calS$ such that $\lim_{t\rightarrow\infty}X(\omega,t)= x^{\star}(\omega)$. In other words, since $\omega\in\Omega_{\mathrm{conv}}$ was arbitrary, there exists an $\calS$-valued random variable $x^{\star}$ such that $\lim_{t\rightarrow\infty} X(t)= x^{\star}$ a.s.
\end{enumerate}
\end{proof}


\subsection{Convergence rates of the objective}
Our first result, stated below, summarizes the global convergence rates in expectation satisfied by the trajectories of \eqref{CSGD}.

\begin{theorem}\label{importante0}
Consider the dynamic \eqref{CSGD} where $f$ and $\sigma$ satisfy the assumptions \eqref{H0} and \eqref{H}. The following statements are satisfied by the solution trajectory $X \in S_d^2$ of \eqref{CSGD}:

\smallskip

\begin{enumerate}[label=(\roman*)]
\item \label{0i} Let $\displaystyle\overline{f\circ X}(t)\eqdef t^{-1}\int_0^t f(X(s))ds$ and $\displaystyle\overline{X}(t)=t^{-1}\int_0^t X(s)ds$. Then 
\begin{equation}\label{eq:0i}
\EE\pa{f(\overline{X}(t))-\min f}\leq \EE\pa{\overline{f\circ X}(t)-\min f}\leq \frac{\dist(X_0,\calS)^2}{2t}+\frac{\sigma_*^2}{2}, \quad \forall t> 0.
\end{equation}
Besides, if $\sigma_{\infty}$ is $\Lp^2(\R_+)$, then 
\begin{equation}
\EE\pa{f(\overline{X}(t))-\min f}\leq \EE\pa{\overline{f\circ X}(t)-\min f}=\calO\pa{\frac{1}{t}} .
\end{equation}

\item \label{0ii} If moreover $f\in \Gamma_{\mu}(\R^d)$ with $\mu > 0$, then $\calS=\{x^{\star}\}$ and 
\begin{equation}
\EE\pa{\norm{ X(t)-x^{\star}}^2}\leq \norm{X_0-x^{\star}}^2e^{-2\mu t}+\frac{\sigma_*^2}{2\mu}, \quad \forall t\geq 0.
\end{equation}
Besides, if $\sigma_{\infty}$ is decreasing and vanishes at infinity, then for every $ \lambda\in ]0,1[$: 
\begin{equation}\label{conv_rate_strongly_convex}
\EE\pa{\norm{X(t)-x^{\star}}^2}\leq \norm{X_0-x^{\star}}^2e^{-2\mu t}+\frac{\sigma_*^2}{2\mu}e^{-2\mu (1-\lambda)t}+\sigma_{\infty}^2(\lambda t), \quad \forall t\geq 0.
\end{equation}
\end{enumerate}
\end{theorem}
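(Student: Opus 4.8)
For part \ref{0i}, I would apply It\^o's formula (Proposition~\ref{itos}) to the anchor function $\phi(x) = \tfrac12\norm{x-x^\star}^2$ with $x^\star \in \calS$, exactly as in \eqref{basic_Ito1} of the proof of Theorem~\ref{converge2}. Taking expectations kills the martingale term $M_t$, and convexity of $f$ gives $\dotp{\nabla f(X(s))}{X(s)-x^\star} \geq f(X(s)) - \min f$. Thus
\[
\EE\pa{\int_0^t (f(X(s))-\min f)\,ds} \leq \frac{\norm{X_0-x^\star}^2}{2} + \frac12\int_0^t \tr(\Sigma(s,X(s)))\,ds .
\]
Bounding $\tr(\Sigma(s,X(s))) = \norm{\sigma(s,X(s))}_F^2 \leq \sigma_*^2$, dividing by $t$, taking $x^\star$ to realize $\dist(X_0,\calS)$, and using convexity of $f$ together with Jensen's inequality (so that $f(\overline X(t)) \leq \overline{f\circ X}(t)$) yields \eqref{eq:0i}. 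When $\sigma_\infty \in \Lp^2(\R_+)$, replace the bound by $\int_0^t \sigma_\infty^2(s)\,ds \leq \int_0^\infty \sigma_\infty^2(s)\,ds < +\infty$, which is a constant, so dividing by $t$ gives the $\calO(1/t)$ rate.

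For part \ref{0ii}, uniqueness of the minimizer is standard for $\mu$-strongly convex $f$. I would now work with the same $\phi$ but keep the process itself rather than its time integral: apply It\^o's formula to $t \mapsto e^{2\mu t}\phi(X(t))$ (i.e. use Proposition~\ref{itos} with $\phi(t,x) = \tfrac12 e^{2\mu t}\norm{x-x^\star}^2$). The strong convexity inequality $\dotp{\nabla f(x)}{x-x^\star} \geq \mu\norm{x-x^\star}^2$ (using $\nabla f(x^\star)=0$) exactly cancels the $2\mu e^{2\mu t}\phi$ term coming from $\partial_t \phi$, leaving, after taking expectations,
\[
e^{2\mu t}\EE\pa{\tfrac12\norm{X(t)-x^\star}^2} \leq \tfrac12\norm{X_0-x^\star}^2 + \tfrac12\int_0^t e^{2\mu s}\tr(\Sigma(s,X(s)))\,ds .
\]
Bounding $\tr(\Sigma) \leq \sigma_*^2$ and computing $\int_0^t e^{2\mu s}\,ds = (e^{2\mu t}-1)/(2\mu) \leq e^{2\mu t}/(2\mu)$, then multiplying through by $2e^{-2\mu t}$, gives the first bound of \ref{0ii}.

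For the refined bound \eqref{conv_rate_strongly_convex} under decreasing $\sigma_\infty$ vanishing at infinity, the idea is to split the time integral $\int_0^t e^{2\mu s}\sigma_\infty^2(s)\,ds$ at $s = \lambda t$: on $[0,\lambda t]$ bound $\sigma_\infty^2(s) \leq \sigma_*^2$ and integrate $e^{2\mu s}$ to get something $\lesssim e^{2\mu\lambda t}\sigma_*^2/(2\mu)$; on $[\lambda t, t]$ bound $\sigma_\infty^2(s) \leq \sigma_\infty^2(\lambda t)$ (monotonicity) and integrate $e^{2\mu s}$ to get something $\lesssim e^{2\mu t}\sigma_\infty^2(\lambda t)/(2\mu)$. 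Multiplying the whole inequality by $2e^{-2\mu t}$ produces the three terms $\norm{X_0-x^\star}^2 e^{-2\mu t}$, $\tfrac{\sigma_*^2}{2\mu}e^{-2\mu(1-\lambda)t}$, and $\sigma_\infty^2(\lambda t)$, matching \eqref{conv_rate_strongly_convex}. The main technical point to be careful about is the justification that the martingale term has zero expectation: one must check the square-integrability hypothesis of Proposition~\ref{itos}, i.e. $\EE\int_0^T \norm{\sigma^\top(s,X(s)) e^{2\mu s}(X(s)-x^\star)}^2\,ds < +\infty$ for each $T$, which follows from $X \in S_d^2$ and the boundedness of $\sigma$ on $[0,T]$ exactly as in the proof of Theorem~\ref{converge2}\ref{acota}. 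This is routine but should be stated; everything else is elementary calculus once the right exponential weight is chosen.
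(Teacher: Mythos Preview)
Your proposal is correct and follows essentially the same approach as the paper. The only presentational difference is in part~\ref{0ii}: the paper applies It\^o's formula to the plain anchor $\phi(x)=\tfrac12\norm{x-x^\star}^2$, obtains an integral inequality $G(t)\leq G(0)-2\mu\int_0^t G(s)\,ds+\int_0^t \tfrac12\sigma_\infty^2(s)\,ds$, and then invokes the Comparison Lemma (Lemma~\ref{comparison}), which amounts to solving the ODE by the integrating factor $e^{2\mu t}$; you instead build the integrating factor directly into the test function $\phi(t,x)=\tfrac12 e^{2\mu t}\norm{x-x^\star}^2$, which is marginally more direct but yields the same inequality and the same splitting argument for~\eqref{conv_rate_strongly_convex}. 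Note that with your bookkeeping the last term comes out as $\sigma_\infty^2(\lambda t)/(2\mu)$ rather than $\sigma_\infty^2(\lambda t)$, an immaterial constant discrepancy.
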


\begin{proof}
\begin{enumerate}[label=(\roman*)]
\item Let $x^{\star} \in \calS$. Let $g(t)=\phi(X(t))=\frac{\norm{X(t)-x^{\star}}^2}{2}$ and $G(t)=\EE(g(t))$. By applying Proposition~\ref{itos} with $\phi$, and using the convexity of $f$, we obtain 
\begin{align}
G(t)-G(0)&=\EE\pa{\int_0^t \langle \nabla f(X(s)), x^{\star}-X(s) \rangle ds}+\frac{1}{2}\EE\pa{\int_0^t  \tr[\Sigma(s,X(s))]ds} \nonumber \\
&\leq -\EE\pa{\int_0^t (f(X(s))-\min f) ds}+\frac{1}{2}\EE\pa{\int_0^t  \tr[\Sigma(s,X(s))]  ds} \label{convex}\\
&\leq -\EE\pa{\int_0^t (f(X(s))-\min f) ds}+\frac{\sigma_*^2 }{2}t \nonumber.
\end{align}
Then rearranging the terms in \eqref{convex}, using $G(t)\geq 0$, and dividing by $t>0$, we obtain
\begin{equation}\label{conv}
\frac{1}{t}\EE\pa{\int_0^t (f(X(s))-\min f) ds}\leq \frac{\norm{ X_0-x^{\star}}^2}{2t}+\frac{\sigma_*^2}{2},\quad\forall t>0 .
\end{equation}
Since $x^{\star}$ is arbitrary, by taking the infimum with respect to $x^{\star} \in \calS$ in \eqref{conv}, we obtain
\begin{equation}\label{conv1}
\frac{1}{t}\EE\pa{\int_0^t (f(X(s))-\min f) ds}\leq \frac{\dist(X_0,\calS)^2}{2t}+\frac{\sigma_*^2}{2},\quad\forall t>0 .
\end{equation}
Moreover, if $\sigma_{\infty}\in \Lp^2(\R_+)$, then using  inequality \eqref{convex}, we have 
\[
G(t)-G(0)\leq-\EE\pa{\int_0^t (f(X(s))-\min f) ds}+\frac{1}{2}\pa{\int_0^{+\infty}  \sigma_{\infty}^2
(s)ds}.
\]
Rearranging as before, we conclude that 
\begin{equation}\label{conv0}
\frac{1}{t}\EE\pa{\int_0^t (f(X(s))-\min f) ds}\leq \frac{\dist(X_0,\calS)^2}{2t}+\frac{1}{2t}\int_0^{+\infty}\sigma_{\infty}^2(s) ds, \quad \forall t>0 .
\end{equation}
Then complete the result with the inequality
\[
\EE\pa{f(\overline{X}(t))-\min f}\leq \EE\pa{\overline{f\circ X}(t)-\min f}
\]
which follows from convexity of $f$ and Jensen's inequality.

\smallskip 

\item 
    
Let $g(t)=\phi(X(t))=\frac{\norm{ X(t)-x^{\star}}^2}{2}, G(t)=\EE(g(t))$. By Proposition \ref{itos} with $\phi$, we obtain
\begin{equation} \label{mia}
G(t)-G(0)=\EE\pa{\int_0^t\langle -\nabla f(X(s)),X(s)-x^{\star}\rangle ds}+\frac{1}{2}\EE\pa{\int_0^t \tr[\Sigma(s,X(s))]ds}.
\end{equation}
Using that $f\in\Gamma_{\mu}(\R^d)$, we deduce that
\begin{align*}
G(t)&\leq G(0) -2\mu \int_{0}^t G(t)+\int_0^t\frac{\sigma_*^2}{2}, \quad\forall t\geq 0.
\end{align*}
In order to invoke Lemma~\ref{comparison}, we solve the ODE 
\[
\begin{cases}
y'(t)&=-2\mu y(t)+\frac{\sigma_*^2}{2},\quad t>0\\
y(0)&=\frac{\norm{ X_0-x^{\star}}^2}{2}.
\end{cases}
\]
Solving it by the integrating factor method, we conclude that
\[
G(t)\leq\frac{\norm{ X_0-x^{\star}}^2}{2}e^{-2\mu t}+\frac{\sigma_*^2}{4\mu} ,\quad \forall t\geq 0.
\]
Combining this inequality with the gradient descent Lemma \ref{descent}, we obtain 
\begin{equation}\label{scr2}
\EE[f(X(t))-\min f]\leq L\pa{\frac{\norm{ X_0-x^{\star}}^2}{2}e^{-2\mu t}+\frac{\sigma_*^2}{4\mu}},\quad \forall t\geq 0.
\end{equation}
Suppose now that $\sigma_{\infty}$ is decreasing and vanishes at infinity. We can bound the trace term by $\sigma_{\infty}^2$ in \eqref{mia}. To use Lemma~\ref{comparison}, we need to solve 
\[
\begin{cases}
y'(t)&= -2\mu y(t)+\frac{\sigma_{\infty}^2(t)}{2},\quad t>0 \\
y(0)&= \frac{\norm{X_0-x^{\star}}^2}{2}.
\end{cases}
\]
Let $\lambda\in ]0,1[$, using the integrating factor method, we get 
\begin{align*}
y(t)&\leq y(0)e^{-2\mu t}+e^{-2\mu t}\int_0^t\frac{\sigma_{\infty}^2(s)}{2}e^{2\mu s}ds\\
&\leq y(0)e^{-2\mu t}+e^{-2\mu t}\pa{\int_0^{\lambda t}\frac{\sigma_{\infty}^2(s)}{2}e^{2\mu s}ds+\int_{\lambda t}^{t}\frac{\sigma_{\infty}^2(s)}{2}e^{2\mu s}ds}\\
&\leq y(0)e^{-2\mu t}+e^{-2\mu t}\pa{\frac{\sigma_*^2}{2}\int_0^{\lambda t}e^{2\mu s}ds+\frac{\sigma_{\infty}^2(\lambda t)}{2}\int_{\lambda t}^{t}e^{2\mu s}ds}\\
&\leq y(0)e^{-2\mu t}+e^{-2\mu t}\pa{\frac{\sigma_*^2}{4\mu}e^{2\mu \lambda t}+\frac{\sigma_{\infty}^2(\lambda t)}{2}e^{2\mu t}}, \hspace{0.2 cm} \forall t\geq 0.
\end{align*}
According to Lemma \ref{comparison}, we deduce that
\[
G(t)\leq \frac{\norm{X_0-x^{\star}}^2}{2}e^{-2\mu t}+\frac{\sigma_*^2}{4\mu}e^{-2\mu( 1-\lambda) t}+\frac{\sigma_{\infty}^2(\lambda t)}{2}, \hspace{0.2 cm} \forall t\geq 0,
\]
which is our claim \eqref{conv_rate_strongly_convex}.
\end{enumerate}
\end{proof}

Under a stronger assumption on $\sigma_{\infty}$, we also have the following pointwise sublinear convergence rate in expectation.
\begin{proposition}\label{beta}
Consider the dynamic \eqref{CSGD} where $f$ and $\sigma$ satisfy the assumptions \eqref{H0} and \eqref{H}. Assume that there exists $K\geq 0, \beta\in [0,1[$ such that
\begin{equation}\label{eq:assumbeta}
\int_0^t (s+1)\sigma_{\infty}^2(s)ds\leq Kt^{\beta}, \quad \forall t\geq 0.
\end{equation}
Then the solution trajectory $X \in S_d^2$ of \eqref{CSGD} satisfies
\[
\EE\pa{f(X(t))-\min f} = \calO(t^{\beta-1}).
\]
\end{proposition}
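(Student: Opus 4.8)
The plan is to run a time-weighted Lyapunov argument, i.e.\ the stochastic counterpart of the classical energy $t(f(x(t))-\min f)+\tfrac12\norm{x(t)-x^\star}^2$ that produces the $\calO(1/t)$ rate for the gradient flow. Fix $x^\star\in\calS$ and set
\[
E(t)\eqdef (t+1)\pa{f(X(t))-\min f}+\frac{1}{2}\norm{X(t)-x^\star}^2 .
\]
Since $f$ is only $C_L^{1,1}(\R^d)$ and not $C^2$, I cannot apply It\^o's formula to $f(X(t))$ directly; instead I would treat the first summand with Proposition~\ref{itoin} (taking $\phi_1(t)=t+1\in C^1(\R_+)$ and $\phi_2=f-\min f\in C_L^{1,1}(\R^d)$, so $\phi_1'\equiv 1$) and the second summand with It\^o's formula (Proposition~\ref{itos}), exactly as in \eqref{basic_Ito1}. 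To pass to expectations and make the stochastic integrals disappear I need the $\Lp^2$ integrability hypotheses of these propositions; they hold because $X\in S_d^2$ and $\sigma$ is bounded by \eqref{H}, since $\norm{\sigma^\top(s,X(s))\nabla f(X(s))}^2\le\sigma_*^2\norm{\nabla f(X(s))}^2\le L^2\sigma_*^2\norm{X(s)-x^\star}^2$ by Corollary~\ref{2L} together with the descent Lemma~\ref{descent} applied at $x^\star$ (where $\nabla f(x^\star)=0$).

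Adding the two relations in expectation, the key cancellation is the one from the deterministic case: the term $+\EE\int_0^t(f(X(s))-\min f)\,ds$ produced by differentiating the weight $t+1$ is absorbed by $-\EE\int_0^t\dotp{\nabla f(X(s))}{X(s)-x^\star}\,ds$ coming from the anchor term, via the convexity inequality $\dotp{\nabla f(X(s))}{X(s)-x^\star}\ge f(X(s))-\min f$. Discarding the nonpositive term $-\EE\int_0^t(s+1)\norm{\nabla f(X(s))}^2\,ds$, only the It\^o corrections remain:
\[
\EE[E(t)]\le E(0)+\frac{L}{2}\EE\int_0^t(s+1)\tr\pa{\Sigma(s,X(s))}\,ds+\frac{1}{2}\EE\int_0^t\tr\pa{\Sigma(s,X(s))}\,ds .
\]
Bounding $\tr(\Sigma(s,X(s)))=\norm{\sigma(s,X(s))}_F^2\le\sigma_\infty^2(s)$ and using $L(s+1)+1\le(L+1)(s+1)$ for $s\ge0$, the right-hand side is at most $E(0)+\tfrac{L+1}{2}\int_0^t(s+1)\sigma_\infty^2(s)\,ds$, and this is precisely where assumption \eqref{eq:assumbeta} enters, giving $\EE[E(t)]\le E(0)+\tfrac{(L+1)K}{2}t^\beta$.

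To finish, since $E(t)\ge(t+1)(f(X(t))-\min f)\ge0$, I obtain $\EE[f(X(t))-\min f]\le (t+1)^{-1}\big(E(0)+\tfrac{(L+1)K}{2}t^\beta\big)$; as $\beta\in[0,1[$ one has $t^\beta/(t+1)=\calO(t^{\beta-1})$, and since $\beta-1\ge-1$ also $(t+1)^{-1}=\calO(t^{\beta-1})$, whence $\EE[f(X(t))-\min f]=\calO(t^{\beta-1})$. I expect the only genuinely delicate point to be the bookkeeping that justifies the vanishing expectation of the two stochastic integrals (checking the $\Lp^2$ hypotheses of Propositions~\ref{itos} and~\ref{itoin}); everything else is the time-weighted energy estimate, which is a routine if slightly lengthy computation, the conceptual content being only that the $(s+1)$ weight in \eqref{eq:assumbeta} is exactly what is needed to absorb the It\^o correction coming from the time-weighted value term. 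As a side remark, \eqref{eq:assumbeta} with $\beta<1$ already forces $\sigma_\infty\in\Lp^2(\R_+)$ (sum $\int_{2^k}^{2^{k+1}}\sigma_\infty^2\le K2^\beta2^{k(\beta-1)}$ over $k$), so Theorem~\ref{converge2} is available and could alternatively be invoked to bound $\sup_{s\ge0}\EE[\norm{X(s)-x^\star}^2]$ uniformly in $s$.
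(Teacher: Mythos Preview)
Your argument is correct and follows essentially the same route as the paper: apply Proposition~\ref{itoin} with the time-weighted value $\phi_1(t)\phi_2(x)=(t+1)(f(x)-\min f)$ (the paper uses the weight $t$ instead of $t+1$) and It\^o's formula to the anchor $\tfrac12\norm{x-x^\star}^2$, use convexity for the cancellation, and bound the two It\^o corrections by $\int_0^t(s+1)\sigma_\infty^2(s)\,ds$ via \eqref{eq:assumbeta}. The only differences are cosmetic (the paper groups the constants as $\tfrac{\max\{1,L\}}{2}$ rather than your $\tfrac{L+1}{2}$), and your side remark that \eqref{eq:assumbeta} already forces $\sigma_\infty\in\Lp^2(\R_+)$ is a nice observation not stated in the paper.
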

\begin{proof}
Given $x^{\star}\in \calS$, let us apply Proposition \ref{itoin} successively with  $V_1(t,x)=t(f(x)-\min f)$, then with $V_2(x)=\frac{1}{2}\norm{x-x^{\star}}^2$. Taking the expectation and adding the two results, we get
\begin{align*}
\EE\pa{V_1(t,X(t))+V_2(X(t))}& \leq \frac{1}{2}\norm{X_0-x^{\star}}^2+\frac{L}{2}\int_0^t s\sigma_{\infty}^2(s)ds+\frac{1}{2}\int_0^t \sigma_{\infty}^2(s)ds\\
&\leq \frac{1}{2}\norm{X_0-x^{\star}}^2+\frac{\max\{1,L\}}{2}\pa{\int_0^t (s+1)\sigma_{\infty}^2(s)ds},
\end{align*}
where we have used the convexity of $f$ in the first inequality. Then we conclude that 
\[
\EE(f(X(t))-\min f)\leq\frac{\norm{X_0-x^{\star}}^2}{2t}+\frac{K\max\{1,L\}}{2}t^{\beta-1}=\calO(t^{\beta-1}).
\]
\end{proof}

When $f$ is also $C^2$, we get an improved $o(t^{-1})$ global convergence rate on the objective in almost sure sense.

\begin{theorem}
Consider the dynamic \eqref{CSGD}. Assume that $f \in C^2(\R^d))$ such that $\Hess f\preccurlyeq LI_d$ and satisfies assumption \eqref{H0}, and that $\sigma$ satisfies assumption \eqref{H} and that  $t\mapsto t\sigma_{\infty}^2(t)\in \Lp^1(\R_+)$. Then, the solution trajectory $X \in S_d^2$  of \eqref{CSGD} obeys:
\begin{enumerate}[label=(\roman*)]
    \item $t\mapsto t\Vert \nabla f(X(t))\Vert^2\in \Lp^1(\R_+)$ a.s.
    \item $f(X(t))-\min f=o(t^{-1})$ a.s.
\end{enumerate}
\end{theorem}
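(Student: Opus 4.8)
\emph{Proof plan.} The strategy is to transpose the classical ``$o(1/t)$'' Lyapunov argument for the gradient flow to the stochastic setting, using It\^o's formula together with the Robbins--Siegmund-type almost-supermartingale theorem (Theorem~\ref{impp}). As a preliminary reduction, note that $t\mapsto t\sigma_\infty^2(t)\in\Lp^1(\R_+)$ and the uniform bound on $\sigma$ in \eqref{H} together force $\sigma_\infty\in\Lp^2(\R_+)$, so all conclusions of Theorem~\ref{converge2} are available; fixing any $x^\star\in\calS$, I may thus assume that, a.s., $\lim_{t\to\infty}\norm{X(t)-x^\star}$ exists and, by \eqref{ecconv} and convexity of $f$, that $\int_0^\infty(f(X(s))-\min f)\,ds<\infty$.

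The core step is to apply It\^o's formula (Proposition~\ref{itos}) to the time-dependent Lyapunov function $\phi(t,x)\eqdef t\,(f(x)-\min f)+\frac12\norm{x-x^\star}^2$, which is admissible precisely because $f\in C^2(\R^d)$. Writing $Y(t)=\phi(t,X(t))$, substituting $\partial_t\phi=f(x)-\min f$, $\nabla_x\phi=t\nabla f(x)+(x-x^\star)$ and $\nabla_x^2\phi=t\,\nabla^2 f(x)+I_d$, and rearranging, I expect the identity
\[
Y(t)+\int_0^t s\norm{\nabla f(X(s))}^2\,ds=Y(0)+A_t-U_t+M_t ,
\]
where $U_t\eqdef\int_0^t\big(\dotp{\nabla f(X(s))}{X(s)-x^\star}-(f(X(s))-\min f)\big)ds$ is nondecreasing by convexity, $M_t\eqdef\int_0^t\dotp{\sigma^\top(s,X(s))\big(s\nabla f(X(s))+X(s)-x^\star\big)}{dW(s)}$, and $A_t\eqdef\frac12\int_0^t\big(s\,\tr(\Sigma(s,X(s))\nabla^2 f(X(s)))+\tr\Sigma(s,X(s))\big)ds$. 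Using $0\preccurlyeq\nabla^2 f\preccurlyeq LI_d$ one gets $0\le\tr(\Sigma\,\nabla^2 f)\le L\,\tr\Sigma\le L\sigma_\infty^2$ together with $\tr\Sigma\le\sigma_\infty^2$, so $A_t$ is nondecreasing with $A_\infty\le\frac{\max\{1,L\}}{2}\int_0^\infty(s+1)\sigma_\infty^2(s)\,ds<\infty$. I then check that $M_t$ is a square-integrable martingale on every $[0,T]$ — hence a martingale on $\R_+$ — by bounding its quadratic variation through $\norm{s\nabla f(X(s))+X(s)-x^\star}^2\le(2L^2 s^2+2)\norm{X(s)-x^\star}^2$, where the estimate $\norm{\nabla f(X(s))}\le L\norm{X(s)-x^\star}$ uses that $\nabla f$ is $L$-Lipschitz with $\nabla f(x^\star)=0$, and then invoking the uniform bound on $\sigma$ and $X\in S_d^2$. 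Since the left-hand side is nonnegative, Theorem~\ref{impp} applies to $Z(t)\eqdef Y(t)+\int_0^t s\norm{\nabla f(X(s))}^2\,ds$ and yields, a.s., $U_\infty<\infty$ and the existence of a finite $\lim_{t\to\infty}Z(t)$.

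Both claims then follow quickly. For (i): $\int_0^t s\norm{\nabla f(X(s))}^2\,ds\le Z(t)\le\sup_{t\ge 0}Z(t)<\infty$ a.s. For (ii): since $\int_0^t s\norm{\nabla f(X(s))}^2\,ds$ converges a.s., $\lim_{t\to\infty}Y(t)$ exists and is finite a.s.; as $\lim_{t\to\infty}\norm{X(t)-x^\star}^2$ exists a.s. (Theorem~\ref{converge2}), this gives that $\ell\eqdef\lim_{t\to\infty}t\,(f(X(t))-\min f)$ exists in $[0,\infty)$ a.s. On the event $\{\ell>0\}$ one would have $f(X(t))-\min f\ge\ell/(2t)$ for all large $t$, hence $\int_0^\infty(f(X(s))-\min f)\,ds=+\infty$, contradicting the preliminary reduction; therefore $\PP(\ell>0)=0$, i.e. $f(X(t))-\min f=o(t^{-1})$ a.s.

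I expect the one genuinely delicate point to be showing that $M_t$ is a true (not merely local) martingale: its integrand carries the factor $s\nabla f(X(s))$, whose magnitude grows with $t$, so no crude global bound works and one must exploit simultaneously the uniform boundedness of $\sigma$, the near-minimizer estimate $\norm{\nabla f(x)}\le L\norm{x-x^\star}$, and the $S_d^2$-regularity of $X$. Everything else is routine bookkeeping: rearranging It\^o's formula and feeding the result into Theorem~\ref{impp}.
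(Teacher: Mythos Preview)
Your proof is correct and follows essentially the same approach as the paper --- It\^o's formula on a time-weighted energy, then Theorem~\ref{impp}, then the a.s.\ integrability of $f(X(\cdot))-\min f$ (from \eqref{ecconv}) to force the limit to zero. The only difference is cosmetic: the paper uses the leaner Lyapunov $\phi(t,x)=t(f(x)-\min f)$ without the anchor term, puts $\int_0^t(f(X(s))-\min f)\,ds$ directly into $A_t$ and $\int_0^t s\norm{\nabla f(X(s))}^2\,ds$ into $U_t$, so Theorem~\ref{impp} yields (i) and the existence of $\lim_{t\to\infty}t(f(X(t))-\min f)$ in one stroke, with Lemma~\ref{lim0} finishing (ii); your inclusion of $\tfrac12\norm{x-x^\star}^2$ and your verification that $M_t$ is a true (rather than merely local) martingale are both harmless but unnecessary.
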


\begin{proof}
By applying It\^o's formula in Proposition~\ref{itos} with $\phi(t,x)=t(f(x)-\min f)$ we get 
\begin{align*}
t(f(X(t))-\min f)&=\int_{0}^t f(X(s))-\min f ds +\frac{1}{2} \int_0^t s\tr[\Sigma(s,X(s))\Hess f(X(s))]ds\\
&-\int_0^t s\Vert\nabla f(X(s))\Vert^2ds+\int_0^t\langle s\sigma^{\top}(s,X(s))\nabla f(X(s)),dW(s)\rangle.
\end{align*}
By \eqref{ecconv} and convexity of $f$, we deduce that $f(X(\cdot))-\min f\in \Lp^1(\R_+)$ a.s. Moreover, 
\[
\int_{0}^{\infty} s\tr[\Sigma(s,X(s))\Hess f(X(s))]ds\leq L\int_0^{\infty} s\sigma_{\infty}^2(s)ds<+\infty.
\]
Then by Theorem \ref{impp}, we have that $\lim_{t\rightarrow\infty} t(f(X(t))-\min f)$ exists a.s. and $\int_0^{\infty} t\Vert\nabla f(X(t))\Vert^2dt<+\infty$ a.s. Finally, by Lemma~\ref{lim0}, we conclude that $\lim_{t\rightarrow\infty} t(f(X(t))-\min f)=0$ a.s. 
\end{proof}
\smallskip

\section{Convergence rates under {\L}ojasiewicz inequality}\label{error_bound_Loja}
The local convergence rate of the first-order descent methods can be understood using the {\L}ojasiewicz  property and the associated {\L}ojasiewicz exponent, see \cite{Attouch2013convergence,Frankel}. The {\L}ojasiewicz property has its roots in algebraic geometry, and it essentially describes a relationship between the objective value and its gradient (or subgradient). 
\begin{definition}[{\L}ojasiewicz inequality]\label{def:lojq}
Let $f:\R^d\rightarrow\R$ be a differentiable with $\calS=\argmin (f)\neq\emptyset$ and $q\in [0,1[$. $f$ satisfies the {\L}ojasiewicz inequality with exponent $q$ at $\bar{x} \in \calS$ if there exists a neighborhood $\calV_{\bar{x}}$ of $\bar{x}$, $r>\min f$ and $\mu > 0$ such that
\begin{equation}\label{li}
\mu(f(x)-\min f)^{q}\leq \norm{\nabla  f(x)},\quad \forall x \in \calV_{\bar{x}} \cap [\min f < f < r] .
\end{equation}
The function $f$ has the {\L}ojasiewicz property on $\calS$ if it obeys \eqref{li} at each point of $\calS$ with the same constant $\mu$ and exponent $q$, and we will write $f \in \Loj^q(\calS)$. 
\end{definition}

Error bounds have also been successfully applied to various branches of optimization, and in particular to complexity analysis, see \cite{Pang97}. Of particular interest in our setting is the H\"olderian error bound.
\begin{definition}[H\"olderian error bound]
Let $f:\R^d\rightarrow\R$ be a proper function such that $\calS=\argmin (f) \neq \emptyset$. $f$ satisfies a H\"olderian (or power-type) error bound inequality with exponent $p \geq 1$, and we write $f \in \EB^p$, if there exists $\gamma>0$ and $r>\min f$ such that
\begin{equation}\label{eq:errbnd}
f(x)-\min f \geq \gamma\dist(x,\calS)^p, \quad \forall x \in [\min f \leq f \leq r] .
\end{equation}
For a given $r>\min f$ such that \eqref{eq:errbnd} holds, we will use the shorhand notation $f\in \EB^p([f \leq r])$.
\end{definition}

A deep result due {\L}ojasiewicz states that for arbitrary continuous semi-algebraic functions, the H\"olderian error bound inequality holds on any compact set, and the {\L}ojasiewicz inequality holds at each point; see \cite{loj1,loj2}. In fact, for convex functions, the {\L}ojasiewicz  property and H\"olderian error bound are actually equivalent. 

\begin{proposition}\label{ebw} 
Assume that $f\in\Gamma_0(\R^d) \cap C^{1}(\R^d)$ with $\calS=\argmin (f) \neq \emptyset$. Let $q\in[0,1[$, $p\eqdef \frac{1}{1-q}\geq 1$ and $r > \min f$. Then $f$ verifies the {\L}ojasiewicz inequality \eqref{li} at $\bar{x} \in \calS$ if and only if the H\"olderian error bound \eqref{eq:errbnd} holds on $\calV_{\bar{x}} \cap [\min f < f < r]$.
\end{proposition}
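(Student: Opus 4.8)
The plan is to prove the two implications separately. The direction ``error bound $\Rightarrow$ {\L}ojasiewicz'' is elementary and purely pointwise, exploiting convexity; the converse rests on an integration argument along the gradient flow \eqref{GF}, in the spirit of the classical {\L}ojasiewicz desingularization.

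For ($\Leftarrow$): suppose \eqref{eq:errbnd} holds on $\calV_{\bar x}\cap[\min f<f<r]$ with a constant $\gamma>0$. Fix $x$ in this set and let $x^{\star}\in\calS$ attain $\dist(x,\calS)$ (it exists since $\calS=\argmin(f)$ is nonempty, closed and convex). Since $f$ is convex and differentiable, the subgradient inequality gives $f(x)-\min f=f(x)-f(x^{\star})\le \dotp{\nabla f(x)}{x-x^{\star}}\le \norm{\nabla f(x)}\dist(x,\calS)$, while the error bound gives $\dist(x,\calS)\le \gamma^{-1/p}(f(x)-\min f)^{1/p}$. Chaining these two estimates and dividing by $(f(x)-\min f)^{1/p}>0$ yields $(f(x)-\min f)^{1-1/p}\le \gamma^{-1/p}\norm{\nabla f(x)}$; since $1-1/p=q$, this is exactly \eqref{li} on the very same set, with {\L}ojasiewicz constant $\mu=\gamma^{1/p}$.

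For ($\Rightarrow$): suppose \eqref{li} holds on a neighborhood $\calV_{\bar x}$, which we may assume contains an open ball $B(\bar x,2\rho)$. Introduce the desingularizing function $\varphi(s)=\frac{p}{\mu}s^{1/p}$, so that $\varphi'(s)=\frac1\mu s^{-q}$ and \eqref{li} rewrites as $\varphi'(f(x)-\min f)\,\norm{\nabla f(x)}\ge 1$ on $\calV_{\bar x}\cap[\min f<f<r]$. Along the gradient flow \eqref{GF} $\dot x=-\nabla f(x)$ (well posed for $f\in\Gamma_0(\R^d)\cap C^{1}(\R^d)$), as long as the trajectory remains in $\calV_{\bar x}\cap[\min f<f<r]$ one computes $\frac{d}{dt}\varphi(f(x(t))-\min f)=-\varphi'(f(x(t))-\min f)\norm{\nabla f(x(t))}^2\le -\norm{\nabla f(x(t))}=-\norm{\dot x(t)}$, so the trajectory has finite length, bounded by $\varphi(f(x_0)-\min f)$. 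Using continuity of $f$ at $\bar x$, where $f(\bar x)=\min f$, choose a smaller ball $B(\bar x,\rho')$ on which $f(x_0)<r$ and $\varphi(f(x_0)-\min f)<\rho$. A standard uniformization/bootstrap argument then shows that for $x_0\in B(\bar x,\rho')\cap[\min f<f<r]$ the flow never leaves $B(\bar x,2\rho)\subseteq\calV_{\bar x}$ (its total length being $<\rho$), that $f(x(t))\to\min f$ as $t\to+\infty$ (otherwise \eqref{li} would keep $\frac{d}{dt}f(x(t))=-\norm{\nabla f(x(t))}^2$ bounded away from $0$, contradicting $f\ge\min f$), and that, having finite length, $x(t)$ converges to some $y_\infty\in\calS$ with $\norm{x_0-y_\infty}\le\varphi(f(x_0)-\min f)$. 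Hence $\dist(x_0,\calS)\le \frac p\mu(f(x_0)-\min f)^{1/p}$, i.e.\ \eqref{eq:errbnd} holds on $B(\bar x,\rho')\cap[\min f<f<r]$ with $\gamma=(\mu/p)^p$ (the case $f(x_0)=\min f$ being trivial).

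The routine parts are the two chained inequalities and the one-line ODE computation. The genuinely delicate point is the uniformization step in ($\Rightarrow$): one must ensure that the gradient flow issued from a point close to $\bar x$ stays inside the neighborhood on which \eqref{li} is available and converges to a minimizer. This is exactly where we use, on one hand, the a priori control $\varphi(f(x_0)-\min f)<\rho$ afforded by continuity of $f$ at $\bar x$, and, on the other hand, the finite-length estimate furnished by $\varphi$ (together, if one prefers, with the classical convergence theory of \eqref{GF} for convex potentials).
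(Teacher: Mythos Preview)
Your proof is correct. The paper itself does not give an independent argument: it simply writes ``Combine \cite[Lemma~4 and Theorem~5]{BolteKLComplexity16}'' and defers entirely to that reference. Your write-up is therefore more informative than the paper's, since you actually spell out both implications.

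A brief comparison: the direction ($\Leftarrow$) in your proof is exactly the elementary convexity argument one finds in the cited work (subgradient inequality plus the error bound). For ($\Rightarrow$) you use the continuous gradient-flow integration with the desingularizing function $\varphi$, which is again the classical mechanism behind the KL/error-bound equivalence in \cite{BolteKLComplexity16}; the uniformization step (choosing $\rho'$ so that $\varphi(f(x_0)-\min f)<\rho$, then bootstrapping to keep the trajectory inside $B(\bar x,2\rho)$) is handled cleanly. One minor point worth flagging: the error bound you obtain in ($\Rightarrow$) is on the possibly smaller neighborhood $B(\bar x,\rho')$ rather than the original $\calV_{\bar x}$, so the ``same neighborhood on both sides'' reading of the proposition does not quite survive --- but this is a looseness in the statement itself, and the equivalence you prove (existence of some neighborhood on each side) is what is actually used downstream in Theorem~\ref{importante3}.
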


\begin{proof}
Combine \cite[Lemma~4 and Theorem~5]{BolteKLComplexity16}. 
\end{proof}

\medskip

We are now ready to state the following ergodic local convergence rate.
\begin{proposition}\label{corimportante0} Consider the hypotheses of Theorem~\ref{importante0} and let $\varepsilon>0$. If $f\in \EB^p([f\leq r_{\varepsilon}])$ for $r_{\varepsilon}>\min f+\frac{\sigma_*^2}{2}+\varepsilon$, then $\exists t_{\varepsilon}>0$ such that
\[
\dist\pa{\EE(\overline{X}(t)),S}=\calO(t^{-\frac{1}{p}})+\calO\pa{\sigma_*^{\frac{2}{p}}}, \quad\forall t\geq t_{\varepsilon}.
\]
\end{proposition}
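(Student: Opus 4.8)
The plan is to combine the ergodic convergence rate of Theorem~\ref{importante0}\ref{0i} with the H\"olderian error bound \eqref{eq:errbnd}, the only subtlety being that the error bound \eqref{eq:errbnd} is stated on a sublevel set $[f\le r_\varepsilon]$, whereas the bound from Theorem~\ref{importante0} controls the expectation $\EE(\overline{f\circ X}(t)-\min f)$ globally. First I would recall from \eqref{eq:0i} that
\[
\EE\pa{\overline{f\circ X}(t)-\min f}\leq \frac{\dist(X_0,\calS)^2}{2t}+\frac{\sigma_*^2}{2},\quad\forall t>0,
\]
so there is a time $t_\varepsilon>0$ (namely any $t_\varepsilon \geq \dist(X_0,\calS)^2/(2\varepsilon)$) from which $\EE\pa{\overline{f\circ X}(t)-\min f}\leq \frac{\sigma_*^2}{2}+\varepsilon < r_\varepsilon - \min f$ for all $t\ge t_\varepsilon$. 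By Jensen's inequality applied to the convex function $f$, $f(\EE(\overline X(t)))\le \EE(f(\overline X(t)))=\EE(\overline{f\circ X}(t))$, hence for $t\ge t_\varepsilon$ the deterministic point $\EE(\overline X(t))$ lies in the sublevel set $[\min f \le f \le r_\varepsilon]$ on which the error bound is valid.

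Next I would apply the error bound \eqref{eq:errbnd} at the point $\EE(\overline X(t))$: for $t\ge t_\varepsilon$,
\[
\gamma\,\dist\pa{\EE(\overline X(t)),\calS}^p \leq f\pa{\EE(\overline X(t))}-\min f \leq \EE\pa{\overline{f\circ X}(t)-\min f}\leq \frac{\dist(X_0,\calS)^2}{2t}+\frac{\sigma_*^2}{2}.
\]
Taking $p$-th roots and using subadditivity of $s\mapsto s^{1/p}$ on $\R_+$ (since $p\ge 1$) gives
\[
\dist\pa{\EE(\overline X(t)),\calS} \leq \gamma^{-1/p}\pa{\frac{\dist(X_0,\calS)^2}{2t}+\frac{\sigma_*^2}{2}}^{1/p} \leq \gamma^{-1/p}\pa{\pa{\tfrac{\dist(X_0,\calS)^2}{2}}^{1/p} t^{-1/p}+\pa{\tfrac{\sigma_*^2}{2}}^{1/p}} = \calO(t^{-1/p})+\calO\pa{\sigma_*^{2/p}},
\]
which is exactly the claim.

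The only point requiring a little care — and the one I would flag as the main obstacle — is ensuring that the error bound is genuinely applicable, i.e. that the averaged point stays in the prescribed sublevel set: this is why the hypothesis $r_\varepsilon > \min f + \sigma_*^2/2 + \varepsilon$ is needed, and it is exactly the slack $\varepsilon$ that dictates how large $t_\varepsilon$ must be (and $t_\varepsilon\to\infty$ as $\varepsilon\to 0^+$, consistent with the remarks in the paper about the $\delta$-dependence of such local rates). One should also note that $\EE(\overline X(t))$ is a well-defined element of $\R^d$ because $X\in S_d^2$ guarantees integrability, so the exchange $f(\EE(\overline X(t)))\le \EE(f(\overline X(t)))$ via Jensen and the first inequality in \eqref{eq:0i} are both legitimate. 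Everything else is the routine root-extraction estimate above.
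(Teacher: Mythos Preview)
Your proposal is correct and follows essentially the same route as the paper: localize $\EE(\overline X(t))$ in the sublevel set via \eqref{eq:0i} and Jensen, apply the error bound there, then take $p$-th roots using subadditivity of $s\mapsto s^{1/p}$. One minor slip: the equality $\EE(f(\overline X(t)))=\EE(\overline{f\circ X}(t))$ should be the inequality $\le$ (this is Jensen applied in the time variable and is precisely the first inequality in \eqref{eq:0i}), but since the direction is the one you need, the argument is unaffected.
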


\begin{proof}
There exists $t_{\varepsilon}>0$ such that for all $t\geq t_{\varepsilon}$, $\frac{\dist(X_0,\cal\calS)^2}{2t}<\varepsilon$. Thus, from \eqref{eq:0i} and Jensen's Inequality, we have 
\[
f\pa{\EE[\overline{X}(t)]} \leq \EE[f(\overline{X}(t))]\leq \min f+\frac{\sigma_*^2}{2}+\varepsilon\leq r_{\varepsilon},\quad\forall t\geq t_{\varepsilon}.
\]
Clearly, $\EE[\overline{X}(t)]\in [f\leq r_{\varepsilon}]$ for $t\geq t_{\varepsilon}$. Using Theorem~\ref{importante0} and that $f\in \EB^p([f\leq r_{\varepsilon}])$, letting $\gamma>0$ the coefficient of the error bound, we have 
\[
\gamma\dist(\EE(\overline{X}(t)),\calS)^p\leq f(\EE[\overline{X}(t)])-\min f
\leq \frac{\dist(X_0,\calS)^2}{2t}+\frac{\sigma_*^2}{2},\quad\forall t\geq t_{\varepsilon}.
\]
Since $p \geq 1$, Jensen's inequality yields
\[
\dist(\EE(\overline{X}(t)),\calS)\leq \pa{\frac{\dist(X_0,\calS)^2}{2\gamma_r }}^{\frac{1}{p}}t^{-\frac{1}{p}}+\pa{\frac{\sigma_*^2}{2\gamma_r}}^{\frac{1}{p}},\quad\forall t\geq t_{\varepsilon}.
\]
\end{proof}

\subsection{Discussion on the localization of the process}\label{subsec:locprocess}
Let us take a moment to elaborate on the localization of the process $X(t)$ generated by \eqref{CSGD} when $f\in C_L^{1,1}(\R^d)\cap\Gamma_0(\R^d)$ and $\sigma_{\infty}\in\Lp^2(\R_+)$. This discussion is essential to understand the challenges underlying the analysis of the local convergence properties and rates in a stochastic setting under (local) error bounds. First, observe that the hypothesis of Lipschitz continuity of the gradient is incompatible with a global hypothesis of error bound or {\L}ojasiewicz inequality unless the exponent is $p=2$ or $q=\frac{1}{2}$, respectively. Therefore, we can only ask for these inequalities to be locally satisfied. Even though, thanks to convexity, we could introduce a global desingularizing function (see \cite[Theorem~3]{BolteKLComplexity16}), this function would not be concave nor convex, a fundamental property usually at the heart of the local analysis. In recent literature on stochastic processes and local properties, it is usual to find hypotheses about the almost sure localization of the process or that it is essentially bounded. Nevertheless, these assumptions are unrealistic or outright false due to the behavior of the Brownian Motion. Hence, we will avoid making these kinds of assumptions.

\smallskip

What we will do is to consider that by Theorem~\ref{converge2} we have that $\lim_{t\rightarrow\infty}f(X(t))=\min f$ a.s., which means that there exists $\Omega_{\mathrm{conv}}\in\calF$ such that $\Pro(\Omega_{\mathrm{conv}})=1$, and $(\forall r>\min f,\forall \omega\in\Omega_{\mathrm{conv}})$, $(\exists t_{r}(\omega)>0)$ such that $(\forall t>t_{r}(\omega))$, $X(\omega,t)\in [f\leq r]$. However, one should not infer from this that $X(t)\in [f\leq r]$ a.s. for $t$ large enough. Indeed, $t_{r}$ is a random variable which cannot be in general bounded uniformly on $\Omega_{\mathrm{conv}}$. Unfortunately, this flawed argument appears quite regularly in the literature. Rather, in this paper, we will invoke measure theoretic arguments to pass from a.s. convergence to almost uniform convergence thanks to Egorov's theorem (see Theorem~\ref{egorov}). More precisely, we will show that 
\begin{tcolorbox}
\begin{center}$(\forall \delta>0,\forall r>\min f), (\exists \Omega_{\delta}\in \calF \text{ s.t. } \Pro(\Omega_{\delta})\geq 1-\delta \text{ and } \exists \hat{t}_{r,\delta}>0), (\forall \omega\in\Omega_{\delta},\forall t>\hat{t}_{r,\delta}),$\\
$X(\omega,t)\in [f\leq r].$\end{center}
\end{tcolorbox}

Hence, this property will allow us to localize $X(t)$ in the sublevel set of $f$ at $r$ for $t$ large enough with probability at least $1-\delta$. In turn, we will be able to invoke the error bound (or {\L}ojasiewicz) inequality.

\subsection{Convergence rates under {\L}ojasiewicz Inequality}
Let $\sigma_{\infty}\in\Lp^2(\R_+),L>0, \delta>0,\beta\in[0,1[$ and some positive constants $C_*, C_{**}, C_K$. Consider the functions $h_{\delta},l_{\delta},k_{\delta}:\R_+\rightarrow\R$ defined by:
\begin{align}
h_{\delta}(t)&=\sigma_{\infty}^2(t)+C_*\sqrt{\delta}  \frac{\sigma_{\infty}^2(t)}{2\sqrt{\int_{\hat{t}_{\delta}}^t \sigma_{\infty}^2(u)du}}, \label{eq:hdelta}\\
l_{\delta}(t)&=\frac{L}{2}\sigma_{\infty}^2(t)+C_{**}\sqrt{\delta}\frac{\sigma_{\infty}^2(t)}{2\sqrt{\int_{\hat{t}_{\delta}}^t \sigma_{\infty}^2(u)du}}, \label{eq:ldelta}\\
k_{\delta}(t)&=\frac{L}{2}\sigma_{\infty}^2(t)+C_{K}\sqrt{\delta}  \frac{\sigma_{\infty}^2(t)t^{\beta-1}}{2\sqrt{\int_{\hat{t}_{\delta}}^t \sigma_{\infty}^2(u)u^{\beta-1}du}}. \label{eq:kdelta}
\end{align}

We are now ready to state our main local convergence result.

\begin{theorem}\label{importante3}
Consider $X\in S_d^2$ the solution trajectory of \eqref{CSGD} where $f$ and $\sigma$ satisfy the assumptions \eqref{H0} and \eqref{H}, and suppose that $\sigma_{\infty}\in \Lp^2(\R_+)$ ($C_{\infty}\eqdef \norm{\sigma_{\infty}}_{\Lp^2(\R_+)}$). Let $p\geq 2$ and $q\eqdef 1-\frac{1}{p}\in [\frac{1}{2},1[$, and assume that $f \in \Loj^q(\calS)$. Consider also the positive constants $C_*,C_{**},C_K,C_d,C_f$ (detailed in the proof). 
Then, for all $\delta>0$, there exists a measurable set $\Omega_{\delta}$ such that $\PP(\Omega_{\delta}) \geq 1-\delta$ and $\hat{t}_{\delta}>0$ such that the following statements hold.
\begin{enumerate}[label=(\roman*)]
\item \label{p2} If $p=2$ and $\sigma_{\infty}$ is decreasing, then $\sigma_{\infty}$ vanishes at infinity and
\begin{enumerate}[label=(\alph*)]
\item \label{p2a}
there exists $\gamma>0$ such that for every $\lambda \in ]0,1[$, 
\begin{equation}
\begin{aligned}
\EE\pa{\frac{\dist(X(t),\calS)^2}{2}}&\leq e^{-2\gamma(t-\hat{t}_{\delta})}\EE\pa{\frac{\dist(X(\hat{t}_{\delta}),\calS)^2}{2}}\\
&+e^{-2\gamma (1-\lambda)(t-\hat{t}_{\delta})}(C_{\infty}^2+C_*C_{\infty}\sqrt{\delta})\\
&+\frac{h_{\delta}(\hat{t}_{\delta}+\lambda(t-\hat{t}_{\delta}))}{2\gamma}+C_d\sqrt{\delta}, \qquad\qquad \forall t>\hat{t}_{\delta} ;
\end{aligned}
\end{equation}

\item \label{p2b}
there exists $\mu > 0$ such that for every $\lambda \in ]0,1[$,
\begin{equation}
\begin{aligned}
\EE\pa{f(X(t))-\min f}&\leq e^{-\mu^2(t-\hat{t}_{\delta})}\EE(f(X(\hat{t}_{\delta}))-\min f)\\
&+e^{-\mu^2 (1-\lambda)(t-\hat{t}_{\delta})}\pa{\frac{LC_{\infty}^2}{2}+C_{**}C_{\infty}\sqrt{\delta}}\\
&+\frac{l_{\delta}(\hat{t}_{\delta}+\lambda(t-\hat{t}_{\delta}))}{\mu^2}+C_f\sqrt{\delta}, \qquad\qquad \forall t>\hat{t}_{\delta}.
\end{aligned}
\end{equation}

Moreover, if \eqref{eq:assumbeta} holds, then
\begin{equation}
\begin{aligned}
\EE\pa{f(X(t))-\min f}&\leq e^{-\mu^2(t-\hat{t}_{\delta})}\EE(f(X(\hat{t}_{\delta}))-\min f) \\
&+e^{-\mu^2 (1-\lambda)(t-\hat{t}_{\delta})}\pa{\frac{LC_{\infty}^2}{2}+C_KC_{\infty}\sqrt{\hat{t}_{\delta}^{\beta-1}}\sqrt{\delta}}\\
&+\frac{k_{\delta}(\hat{t}_{\delta}+\lambda(t-\hat{t}_{\delta}))}{\mu^2}+C_f\sqrt{\delta}, \qquad\qquad \forall t>\hat{t}_{\delta}.
\end{aligned}
\end{equation}
\end{enumerate}

\item \label{pg2} If $p>2$:
\begin{enumerate}[label=(\alph*)]
\item There exists $\gamma>0$ such that
\begin{align}
\EE\pa{\frac{\dist(X(t),\calS)^2}{2}}\leq y_{\delta}^{\star}(t)+C_d\sqrt{\delta}, \qquad\qquad \forall t>\hat{t}_{\delta},
\end{align}
where $y_{\delta}^{\star}$ is the solution of the Cauchy problem 
\begin{enumerate}[label=\rm{(C.\arabic*)}]
\item \label{c1} 
\begin{center}$
\begin{cases}
    y'(t)&=-2^{\frac{p}{2}}\gamma y^{\frac{p}{2}}+h_{\delta}(t),\quad t>\hat{t}_{\delta}\\
    y(\hat{t}_{\delta})&=\EE\pa{\frac{\dist(X(\hat{t}_{\delta},\calS))^2 }{2}\ind_{\Omega_{\delta}}}. \nonumber
\end{cases}$
\end{center}
\end{enumerate}

\item There exists $\mu>0$ such that
\begin{align}
\EE\br{f(X(t))-\min f}\leq w_{\delta}^{\star}(t)+C_f\sqrt{\delta},\qquad\qquad \forall t>\hat{t}_{\delta},
\end{align}
where $w_{\delta}^{\star}$ is the solution of the Cauchy problem
\begin{enumerate}[label=\rm{(C.\arabic*)},start=2]
\item \label{c2}
\begin{center}
$\begin{cases}
    y'(t)&=-\mu^2 y(t)^{2q}+l_{\delta}(t),\quad t>\hat{t}_{\delta}\\
    y(\hat{t}_{\delta})&=\EE([f(X(\hat{t}_{\delta})-\min f]\ind_{\Omega_{\delta}}).
\end{cases}$
\end{center}
\end{enumerate}

\medskip 

Moreover, if \eqref{eq:assumbeta} holds, then 
\begin{align}
\EE\br{f(X(t))-\min f}\leq z_{\delta}^{\star}(t)+C_f\sqrt{\delta}, \qquad\qquad \forall t>\hat{t}_{\delta},
\end{align}
where $z_{\delta}^{\star}$ is the solution of the Cauchy problem
\begin{enumerate}[label=\rm{(C.\arabic*)},start=3]
\item \label{c3}
\begin{center}
$\begin{cases}
y'(t)&=-\mu^2 y(t)^{2q}+k_{\delta}(t),\quad t>\hat{t}_{\delta}\\
y(\hat{t}_{\delta})&=\EE([f(X(\hat{t}_{\delta})-\min f]\ind_{\Omega_{\delta}}).
\end{cases}$
\end{center}
\end{enumerate}
\end{enumerate}
\end{enumerate}
\end{theorem}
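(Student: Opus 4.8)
The plan is to localize the trajectory by a measure-theoretic argument, then run an Itô-inequality/comparison argument on a high-probability event; the only genuinely new point is the control of the stochastic integral once one restricts to such an event. For the localization, recall from Theorem~\ref{converge2} that $f(X(t))\to\min f$ and $X(t)\to x^\star$ a.s., that $\sup_{t\geq0}\norm{X(t)}<\infty$ a.s., and that $t\mapsto f(X(\omega,t))$ is a.s.\ continuous. Applying Egorov's theorem (Theorem~\ref{egorov}) to these a.s.\ convergences and intersecting the resulting set with an event $\{\sup_{t\geq0}\norm{X(t)}\leq R\}$ of probability $\geq1-\delta$ (then relabelling $\delta$), I obtain $\Omega_\delta$ with $\PP(\Omega_\delta)\geq1-\delta$ on which the convergences are uniform and the trajectory stays in a fixed compact ball $B$. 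Over the compact set $\calS\cap B$, covering it by finitely many {\L}ojasiewicz neighbourhoods and using that the exponent $q$ and the constant $\mu$ are uniform (Definition~\ref{def:lojq}), the {\L}ojasiewicz inequality $\norm{\nabla f(x)}\geq\mu(f(x)-\min f)^q$ holds, with these same $q,\mu$, on $\calV\cap[\min f\leq f<r]$ for some neighbourhood $\calV\supseteq\calS\cap B$ and some $r>\min f$, and by Proposition~\ref{ebw} this is equivalent there to the H\"olderian error bound $f(x)-\min f\geq\gamma\dist(x,\calS)^p$ with $p=1/(1-q)$. Uniform convergence on $\Omega_\delta$ then produces $\hat t_\delta>0$ such that $X(\omega,t)\in\calV\cap[\min f\leq f<r]$ for all $\omega\in\Omega_\delta$, $t>\hat t_\delta$; enlarging $\hat t_\delta$, I also make the uniform-in-time moment bounds and the $\calO(t^{\beta-1})$ rate of Proposition~\ref{beta} active from $\hat t_\delta$ onwards.

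For the quantitative part, I would take $\phi=\tfrac12\dist(\cdot,\calS)^2\in C_1^{1,1}(\R^d)\cap\Gamma_0(\R^d)$ (with $\nabla\phi(x)=x-P_\calS(x)$) for the distance estimates and $\phi=f-\min f\in C_L^{1,1}(\R^d)$ for the value estimates, apply Proposition~\ref{itoin} with $\phi_1\equiv1$ on $[\hat t_\delta,t]$, multiply the resulting pathwise inequality by $\ind_{\Omega_\delta}$, and take expectations; write $\Phi_\delta(t)\eqdef\EE[\phi(X(t))\ind_{\Omega_\delta}]$. The trace term is controlled by $\norm{\sigma(s,X(s))}_F^2\leq\sigma_\infty^2(s)$. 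On $\Omega_\delta$ and for $s>\hat t_\delta$, convexity gives $\langle\nabla\phi(X(s)),\nabla f(X(s))\rangle\geq f(X(s))-\min f$, which the error bound lower-bounds by $\gamma2^{p/2}\phi(X(s))^{p/2}$ (distance anchor), while the {\L}ojasiewicz inequality gives $\norm{\nabla f(X(s))}^2\geq\mu^2(f(X(s))-\min f)^{2q}$ (value anchor); since $\ind_{\Omega_\delta}^{p/2}=\ind_{\Omega_\delta}$, Jensen's inequality ($p/2,2q\geq1$) turns these into the drift terms $-2^{p/2}\gamma\int_{\hat t_\delta}^t\Phi_\delta(s)^{p/2}\,ds$, resp. $-\mu^2\int_{\hat t_\delta}^t\Phi_\delta(s)^{2q}\,ds$. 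The stochastic integral $\mathcal M_t-\mathcal M_{\hat t_\delta}=\int_{\hat t_\delta}^t\langle\sigma^\top(s,X(s))\nabla\phi(X(s)),dW(s)\rangle$ has zero mean, so $\abs{\EE[(\mathcal M_t-\mathcal M_{\hat t_\delta})\ind_{\Omega_\delta}]}=\abs{\EE[(\mathcal M_t-\mathcal M_{\hat t_\delta})\ind_{\Omega_\delta^c}]}\leq(\EE\abs{\mathcal M_t-\mathcal M_{\hat t_\delta}}^2)^{1/2}\sqrt\delta$, and It\^o's isometry together with $\norm{\sigma}_F^2\leq\sigma_\infty^2$, Corollary~\ref{2L}, Lemma~\ref{descent}, and $\sup_t\EE[\dist(X(t),\calS)^2]<\infty$ (resp. the $\calO(t^{\beta-1})$ decay under \eqref{eq:assumbeta}) bound $\EE\abs{\mathcal M_t-\mathcal M_{\hat t_\delta}}^2$ by a constant multiple of $\int_{\hat t_\delta}^t\sigma_\infty^2$ (resp. $\int_{\hat t_\delta}^t\sigma_\infty^2(u)u^{\beta-1}du$). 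Since $\tfrac{d}{dt}\sqrt{\int_{\hat t_\delta}^t\sigma_\infty^2}=\tfrac{\sigma_\infty^2(t)}{2\sqrt{\int_{\hat t_\delta}^t\sigma_\infty^2}}$, the trace and martingale contributions together are $\leq\int_{\hat t_\delta}^t h_\delta$ (resp. $\leq\int_{\hat t_\delta}^t l_\delta$, $\leq\int_{\hat t_\delta}^t k_\delta$) for suitable constants $C_*,C_{**},C_K$ in \eqref{eq:hdelta}--\eqref{eq:kdelta}.

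This gives the integral inequality $\Phi_\delta(t)\leq\Phi_\delta(\hat t_\delta)-2^{p/2}\gamma\int_{\hat t_\delta}^t\Phi_\delta(s)^{p/2}\,ds+\int_{\hat t_\delta}^t h_\delta$ for the distance anchor, and $\Psi_\delta(t)\leq\Psi_\delta(\hat t_\delta)-\mu^2\int_{\hat t_\delta}^t\Psi_\delta(s)^{2q}\,ds+\int_{\hat t_\delta}^t l_\delta$ (or the same with $k_\delta$ under \eqref{eq:assumbeta}) for the value anchor, where $\Psi_\delta(t)\eqdef\EE[(f(X(t))-\min f)\ind_{\Omega_\delta}]$. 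The comparison principle (Lemma~\ref{comparison}) then yields $\Phi_\delta(t)\leq y_\delta^\star(t)$, $\Psi_\delta(t)\leq w_\delta^\star(t)$, and, under \eqref{eq:assumbeta}, $\Psi_\delta(t)\leq z_\delta^\star(t)$, with $y_\delta^\star,w_\delta^\star,z_\delta^\star$ the solutions of the three Cauchy problems of the statement. For $p=2$ these problems are linear; solving them by the integrating-factor method and splitting the integral at $\hat t_\delta+\lambda(t-\hat t_\delta)$ exactly as in the proof of Theorem~\ref{importante0}, using that $\sigma_\infty$ — hence $h_\delta,l_\delta,k_\delta$ — is decreasing and that $\int_{\hat t_\delta}^\infty\sigma_\infty^2\leq C_\infty^2$ and $\int_{\hat t_\delta}^\infty\sigma_\infty^2(u)u^{\beta-1}du\leq\hat t_\delta^{\beta-1}C_\infty^2$, produces the closed-form bounds of case~(i); for $p>2$ one simply keeps the bound by the Cauchy-problem solution. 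Finally, I pass back from $\Phi_\delta$ to $\EE[\phi(X(t))]=\Phi_\delta(t)+\EE[\phi(X(t))\ind_{\Omega_\delta^c}]$ and bound the last term by $(\sup_s\EE[\phi(X(s))^2])^{1/2}\sqrt\delta$, which we name $C_d\sqrt\delta$ (resp. $C_f\sqrt\delta$); the uniform fourth moment $\sup_t\EE[\dist(X(t),\calS)^4]<\infty$ required here — which also enters the martingale estimate and, via Lemma~\ref{descent}, gives $\sup_t\EE[(f(X(t))-\min f)^2]<\infty$ — is obtained by applying It\^o's formula to $\norm{X(t)-x^\star}^4$ and invoking Theorem~\ref{converge2}. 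For $p>2$ the initial datum stays $\EE[\,\cdot\,\ind_{\Omega_\delta}]$; for $p=2$ the monotone dependence of the linear ODE on its initial value lets me replace $\Phi_\delta(\hat t_\delta)$ by $\EE[\phi(X(\hat t_\delta))]$, yielding the displayed inequalities.

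The main obstacle is the localization step: reconciling the purely local {\L}ojasiewicz inequality with the unboundedness forced by the Brownian motion — precisely the difficulty discussed in Section~\ref{subsec:locprocess} — is what compels the combination of Egorov's theorem with the a.s.\ boundedness of the trajectory to confine the dynamics to a compact set, followed by a gluing argument to turn the pointwise inequalities into one uniform sublevel-set error bound. The control of $\mathcal M_t-\mathcal M_{\hat t_\delta}$ on $\Omega_\delta$, which is not $\calF_{\hat t_\delta}$-measurable, is the second delicate point, but it is handled cleanly by the mean-zero identity and Cauchy--Schwarz once the uniform moment bounds are available.
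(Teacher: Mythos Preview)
Your proposal is correct and follows essentially the same architecture as the paper's proof: Egorov-based localization, It\^o's inequality (Proposition~\ref{itoin}) multiplied by $\ind_{\Omega_\delta}$, the mean-zero identity plus Cauchy--Schwarz to control the restricted stochastic integral, Jensen's inequality for the drift, the comparison lemma for the resulting integral inequality, and Corollary~\ref{equality} (your Cauchy--Schwarz with the fourth-moment bound) to pass back to the full expectation. Your localization is in fact slightly more careful than the paper's, which simply asserts the existence of a uniform neighbourhood $\calV_{\calS}$; your explicit compactness/covering step via $\calS\cap B$ fills a gap the paper leaves implicit. One small inaccuracy: the fourth-moment bound is only needed in the final $C_d\sqrt\delta$, $C_f\sqrt\delta$ step, not in the martingale estimate itself, which in the paper uses only $\sup_t\EE[\dist(X(t),\calS)^2]<\infty$ from Theorem~\ref{converge2}\ref{acota}.
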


\medskip

Before proceeding with the proof, a few remarks are in order.\smallskip
\begin{remark}
The hypothesis that $f$ has a Lipschitz continuous gradient restricts the {\L}ojasiewicz exponent $q$ to be in $[\frac{1}{2},1[$. 
\end{remark}
\begin{remark}
If we have a {\textit{global}} error bound (or {\L}ojasiewicz inequality), then as noted in the discussion of Section~\ref{subsec:locprocess}, one necessarily has $p=2$ (or $q=\frac{1}{2}$). In this case, the statements \ref{p2} of Theorem~\ref{importante3} will hold if we replace $\sigma_{\infty}\in \Lp^2(\R_+)$ by $\sigma_{\infty}$ decreasing and vanishing at infinity, $\delta$ by $0$ and $\hat{t}_{\delta}$ by $0$. Clearly, one recovers \eqref{conv_rate_strongly_convex}.
\end{remark}
\begin{remark}
It is important to highlight the trade-off in the selection of $\delta$. Although $\delta$ can be arbitrarily small, the time from which the inequalities are satisfied, $\hat{t}_{\delta}$, surely increases when $\delta$ approaches $0^+$. Besides, let $q_{\delta,\hat{t}_{\delta}}:\R_+\rightarrow\R$ be a decreasing function. Our convergence rates in Theorem \ref{importante3} are of the form $\EE[m(X(t))]\leq q_{\delta,\hat{t}_{\delta}}(t)+C\sqrt{\delta}, \quad \forall t>t_{\delta}$, where $m(x)=f(x)-\min f$ or $m(x)=\dist(x,\calS)^2/2$. Let $\varepsilon\in]0,2C[$ and  $\delta^{\star}=\frac{\varepsilon^2}{4C^2}$. Then one gets an $\varepsilon$-optimal solution for $t>\max\{q^{\star}(\varepsilon),\hat{t}_{\delta^{\star}}\}$.
\end{remark}
\begin{remark}
Referring again to the discussion of Section~\ref{subsec:locprocess}, we have that there exists $\delta>0$ and $\Omega_{\delta}\in\calF$ with $\Pro(\Omega_{\delta})\geq 1-\delta$ over which we have uniform convergence of the objective. If $\delta$ could be $0$ (a.s. uniform convergence), there would be a $\hat{t}>0$ such that $X(t)\in [f\leq r],\forall t>\hat{t}$  a.s. Thus, the statements in Theorem \ref{importante3} would hold if we replace $\delta$ by $0$ and $\hat{t}_{\delta}$ by $\hat{t}$. The proof is far easier in this case. It is however not easy to ensure the existence of such $\hat{t}$ in general.
\end{remark}
\begin{remark}
In order to find explicit convergence rates in Theorem \ref{importante3} we have to solve or bound the solution of the Cauchy problems \ref{c1}, \ref{c2} and \ref{c3}. We can generalize these problems as follows: Let $a>0,b>1, \hat{t}_{\delta}>0, \delta>0, y_0(\hat{t}_{\delta},\delta)>0$ and $p_{\delta}$ a nonnegative integrable function. Consider 
\begin{enumerate}[label=\rm{(C.0)}]
    \item\label{c0}
    \begin{center}
$ \begin{cases}
    y'(t)&=-ay^b(t)+p_{\delta}(t),\quad t>\hat{t}_{\delta}\\
    y(\hat{t}_{\delta})&=y_0(\hat{t}_{\delta},\delta).
    \end{cases}$
\end{center}
\end{enumerate}

Although one could give an explicit ad-hoc $p_{\delta}$ in order to find a particular solution of \ref{c0}, the dependence of this function on $\hat{t}_{\delta}$ is unavoidable, which is a problem, since $p_{\delta}$ is explicitly related to $\sigma_{\infty}$, and this in turn is the one that defines $\hat{t}_{\delta}$ in the first place.\smallskip

To the best of our knowledge, there is no way to arithmetically solve this non linear ODE, not even a sharp bound of the solution.\smallskip 

Nevertheless, if $y(t)=\calO\pa{(t+1)^{-\frac{1}{b-1}}}$, then $p_{\delta}(t)=\calO\pa{(t+1)^{-\frac{b}{b-1}}}$. Which leads us to make the following conjecture:
\begin{conjecture}\label{conjecture}
\begin{tcolorbox}
\begin{center}

If $p_{\delta}=\calO(\sigma_{\infty}^2)$ and $\sigma_{\infty}^2(t)=\calO\pa{(t+1)^{-\frac{b}{b-1}}}$ (for constants independent of $\delta$ and $\hat{t}_{\delta}$),
then $y(t)=\calO\pa{(t+1)^{-\frac{1}{b-1}}}$.\end{center}
\end{tcolorbox}
\end{conjecture}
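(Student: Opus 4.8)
\textbf{Proof proposal for Conjecture~\ref{conjecture}.}

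The plan is to prove the conjecture via a comparison/bootstrap argument, treating the ODE \ref{c0} as a perturbation of the autonomous equation $y'=-ay^b$ whose decay rate is exactly $(t+1)^{-1/(b-1)}$. First I would reduce to the case $p_\delta(t) = C_0 (t+1)^{-b/(b-1)}$ for a single constant $C_0$ (the general case follows by monotone comparison, Lemma~\ref{comparison}, since $p_\delta = \calO(\sigma_\infty^2)$ and $\sigma_\infty^2(t)=\calO((t+1)^{-b/(b-1)})$). The natural ansatz is to look for a supersolution of the form $\bar y(t) = A(t+1)^{-1/(b-1)}$. Plugging in, one needs
\[
-\frac{A}{b-1}(t+1)^{-\frac{1}{b-1}-1} \;\geq\; -a A^b (t+1)^{-\frac{b}{b-1}} + C_0 (t+1)^{-\frac{b}{b-1}},
\]
and since $\frac{1}{b-1}+1 = \frac{b}{b-1}$, all three terms carry the \emph{same} power $(t+1)^{-b/(b-1)}$, so this reduces to the purely algebraic inequality $a A^b - \frac{A}{b-1} \geq C_0$. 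For $A$ large enough (using $b>1$ so that $A^b$ dominates $A$), this holds; choosing additionally $A \geq y_0(\hat t_\delta,\delta)(\hat t_\delta+1)^{1/(b-1)}$ guarantees $\bar y(\hat t_\delta)\geq y(\hat t_\delta)$, and then the comparison principle for scalar ODEs (Lemma~\ref{comparison}) yields $y(t)\leq \bar y(t) = \calO((t+1)^{-1/(b-1)})$ on $[\hat t_\delta,\infty)$.

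The key steps, in order, are: (1) normalize $p_\delta$ to a pure power via monotone comparison; (2) write down the power-law supersolution ansatz and verify the exponents match, reducing the differential inequality to $aA^b - A/(b-1)\geq C_0$; (3) solve this algebraic inequality for $A$, also absorbing the initial-condition constraint; (4) invoke Lemma~\ref{comparison} to transfer the bound on the supersolution to $y$ itself. A short additional remark would record that the rate is sharp, since the unperturbed solution of $y'=-ay^b$, $y(\hat t_\delta)>0$, already decays exactly like $(t+1)^{-1/(b-1)}$, so no faster rate can hold in general.

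The main obstacle is the uniformity in $\delta$ and $\hat t_\delta$: the conjecture insists that the $\calO$ constant be independent of these parameters, whereas the initial value $y_0(\hat t_\delta,\delta)$ and the lower endpoint $\hat t_\delta$ both vary. This is precisely where the power-law structure saves us: the constraint $A\geq y_0(\hat t_\delta,\delta)(\hat t_\delta+1)^{1/(b-1)}$ would be harmless \emph{if} we knew an a priori bound $y_0(\hat t_\delta,\delta)\leq \kappa (\hat t_\delta+1)^{-1/(b-1)}$ with $\kappa$ universal — which is exactly the kind of estimate furnished by the global rates already proved (e.g. Theorem~\ref{importante0} and Proposition~\ref{beta} give $\EE[f(X(\hat t_\delta))-\min f]=\calO(\hat t_\delta^{\beta-1})$, and one would need to check the exponents are compatible with $b$). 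Making this circularity-free — the bound on $y_0$ must not itself depend on $\hat t_\delta$ through $\delta$ — is the delicate point, and is likely why the authors leave it as a conjecture rather than a theorem; a fully rigorous treatment would need to pin down how $\hat t_\delta$ is produced by Egorov's theorem and whether the resulting $y_0(\hat t_\delta,\delta)$ admits the required decay.
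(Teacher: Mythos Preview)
The statement is presented in the paper as an open \emph{conjecture}, not a theorem: the authors explicitly write that ``to the best of our knowledge, there is no way to arithmetically solve this nonlinear ODE, not even a sharp bound of the solution,'' and leave the claim unproved. There is therefore no paper proof to compare your proposal against.

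On its own merits, your supersolution ansatz $\bar y(t)=A(t+1)^{-1/(b-1)}$ is the natural one, and the reduction to the algebraic inequality $aA^b-A/(b-1)\geq C_0$ is correct and clean; the exponent matching you exploit is exactly the heuristic the authors use to motivate the conjecture in the first place. One minor technicality: Lemma~\ref{comparison} as stated bounds an integral \emph{sub}solution by the ODE solution, whereas you need the dual direction (ODE solution $\leq$ smooth supersolution). This is standard comparison theory but is not literally the cited lemma.

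More importantly, you have correctly isolated the real obstruction, and your own final paragraph shows the argument does not close. The conjecture requires the $\calO$-constant to be independent of $\delta$ and $\hat t_\delta$, while your initial-data constraint forces $A\geq y_0(\hat t_\delta,\delta)(\hat t_\delta+1)^{1/(b-1)}$. In the relevant case \ref{c2} one has $b=2q$ with $q\in(\tfrac12,1)$, hence $1/(b-1)=1/(2q-1)>1$; the best available a priori bound on $y_0(\hat t_\delta,\delta)=\EE[(f(X(\hat t_\delta))-\min f)\ind_{\Omega_\delta}]$ is $\calO(\hat t_\delta^{-1})$ (Theorem~\ref{importante0} or Proposition~\ref{beta}), which decays strictly slower than $(\hat t_\delta+1)^{-1/(b-1)}$. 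Thus $A$ blows up as $\delta\to 0^+$ (equivalently $\hat t_\delta\to\infty$), and the bootstrap is circular precisely in the regime of interest. Your proposal is a sound outline of what a proof would need to accomplish, and it pinpoints the gap, but it does not resolve the conjecture.
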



\end{remark}

\begin{proof}{Proof of Theorem~\ref{importante3}.}
Given that $\sigma_{\infty}\in\Lp^2(\R_+)$, if it is decreasing, we have immediately that it vanishes at infinity. Let $x^{\star}\in \calS$. Let us recall that by claim \ref{acota} of Theorem~\ref{converge2}, there exists $C^{*}>0$ such that 
\[
\sup_{t\geq 0}\EE\pa{\dist(X(t),\calS)^2} \leq \sup_{t\geq 0}\EE\pa{\norm{X(t)-x^{\star}}^2}\leq C^{*}.
\]
On the other hand, by Theorem~\ref{converge2}\ref{iiconv}, there exists a set $\Omega_{\mathrm{conv}}\in\calF$ such that $\Pro(\Omega_{\mathrm{conv}})=1$ where, for all $\omega\in\Omega_{\mathrm{conv}}$: $\lim_{t\rightarrow\infty} f(X(\omega,t))=\min f$, $t\mapsto f(X(\omega,t))$ is continuous, and $\lim_{t\rightarrow\infty} \dist(X(\omega,t),\calS) = 0$. Then, by Theorem \ref{egorov} for every $\delta>0$ there exists $\Omega_{\delta}\in\calF$ such that $\Omega_{\delta}\subset \Omega_{\mathrm{conv}}$, $\Pro(\Omega_{\delta})>1-\delta$ and $f(X(\cdot,t))$ (resp. $\dist(X(\cdot,t),\calS)$) converges uniformly to $\min f$ (resp. to $0$) on $\Omega_{\delta}$. This means that given $r \geq \min f$, and for every $\delta>0$, there exist $\hat{t}_{\delta}>0$ and $\Omega_{\delta}\in\calF$ with $\Pro(\Omega_{\delta})>1-\delta$ such that $X(\omega,t)\in [f\leq r] \cap \calV_{\calS}$ for all $t\geq \hat{t}_{\delta}$ and $\omega\in\Omega_{\delta}$, where $\calV_{\calS}$ is a neighbourhood of $\calS$. On the other hand, since $f \in \Loj^q(\calS)$, by Proposition~\ref{ebw}, there exists $r>\min f$ and a neighbourhood $\calV_{\calS}$ of $\calS$ such that $f$ verifies the $p$-H\"olderian error bound inequality \eqref{eq:errbnd} on $[\min f < f < r] \cap \calV_{\calS}$. Consequently, for any $\delta > 0$, there exists $t\geq \hat{t}_{\delta}$ large enough such that the $p$-H\"olderian error bound inequality holds at $X(\omega,t)$ for all $t\geq \hat{t}_{\delta}$ and $\omega\in\Omega_{\delta}$.
\medskip
%
%

We are now ready to start. Let $x^{\star}\in \calS$, $\delta>0$, and $t\geq \hat{t}_{\delta}$.

\begin{enumerate}[label=(\roman*)]


\item \label{asin1*} $p=2$: 
\begin{enumerate}[label=(\alph*)]
\item Let $\hat{g}(t)=\hat{\phi}(X(t))=\frac{\dist(X(t),\calS)^2}{2}$, $\hat{G}(t)=\EE(\hat{g}(t)\ind_{\Omega_{\delta}})$, and $\mu>0$ be the coefficient of the error bound inequality. We have 
\[
\nabla\hat{\phi}(X(t))=X(t)-P_\calS(X(t)),
\]
where $P_{\calS}(x)$ is the projection of $x$ on $\calS$, so $\hat{\phi}\in C_1^{1,1}(\R^d)$.  We use Proposition~\ref{itoin} to obtain
\begin{multline}\label{dist*}
\hat{g}(t)-\hat{g}(\hat{t}_{\delta})
\leq - \int_{\hat{t}_{\delta}}^t\dotp{\nabla f(X(s)}{X(s)-P_\calS(X(s))} ds\\
+\int_{\hat{t}_{\delta}}^t \tr[\Sigma(s,X(s))]ds+\int_{\hat{t}_{\delta}}^t\dotp{\sigma^{\top}(s,X(s))(X(s)-P_\calS(X(s)))}{dW(s)}.
\end{multline} 
 
We have that $\tr[\Sigma(s,X(s))]\leq \sigma_{\infty}^2(s)$ and by convexity 
\[
-\dotp{\nabla f(X(s)}{X(s)-P_\calS(X(s))}\leq -\pa{f(X(s))-\min f}.
\]
Therefore,
\begin{multline*}
\hat{g}(t)-\hat{g}(\hat{t}_{\delta})
\leq-\int_{\hat{t}_{\delta}}^t (f(X(s))-\min f) ds\\
+\int_{\hat{t}_{\delta}}^t \sigma_{\infty}^2(s)ds+\int_{\hat{t}_{\delta}}^t\langle \sigma^{\top}(s,X(s))(X(s)-P_\calS(X(s))), dW(s)\rangle.\nonumber
 \end{multline*}
Then, multiplying this inequality by $\ind_{\Omega_{\delta}}$, and taking expectation we obtain  
\begin{multline*}
\hat{G}(t)-\hat{G}(\hat{t}_{\delta})
\leq - \EE\br{\int_{\hat{t}_{\delta}}^t \pa{f(X(s))-\min f} \ind_{\Omega_{\delta}} ds}
+\int_{\hat{t}_{\delta}}^t \sigma_{\infty}^2(s)ds\\
+\EE\br{\ind_{\Omega_{\delta}}\int_{\hat{t}_{\delta}}^t\dotp{\sigma^{\top}(s,X(s))(X(s)-P_\calS(X(s)))}{ dW(s)}}.
\end{multline*}
On the other hand, since $\sigma_{\infty}\in \Lp^2(\R_+)$, we have for all $T>0$
\begin{align*}
\EE\pa{\int_0^T \norm{\sigma^{\top}(s,X(s))(X(s)-P_\calS(X(s)))}^2 ds}&\leq \EE\pa{\int_0^T \sigma_{\infty}^2(s)\norm{X(s)-P_\calS(X(s))}^2 ds}\\
&=\int_0^T \sigma_{\infty}^2(s)\EE(\dist(X(t),\calS)^2)\\
&\leq C^*\int_0^{+\infty} \sigma_{\infty}^2(s)<\infty .
\end{align*}
Letting $Y(s)=\sigma^{\top}(s,X(s))(X(s)-P_\calS(X(s)))$, then 
\[
\EE\br{\int_{\hat{t}_{\delta}}^t\dotp{Y(s)}{dW(s)}}=0.
\] 
This immediately implies
\[
\EE\br{\ind_{\Omega_{\delta}}\int_{\hat{t}_{\delta}}^t\langle Y(s), dW(s)\rangle}=-\EE\br{\ind_{\Omega_{\mathrm{conv}} \setminus \Omega_{\delta}}\int_{\hat{t}_{\delta}}^t\langle Y(s), dW(s)\rangle}.
\]
The right hand side can be bounded using Cauchy-Schwarz inequality as follows
\begin{align*}
&\abs{\EE\br{\ind_{\Omega_{\mathrm{conv}} \setminus \Omega_{\delta}}\int_{\hat{t}_{\delta}}^t\dotp{Y(s)}{dW(s)}}}\\
&=\abs{\EE\br{\ind_{\Omega_{\mathrm{conv}} \setminus \Omega_{\delta}}\int_{\hat{t}_{\delta}}^t\dotp{\sigma^{\top}(s,X(s))(X(s)-P_\calS(X(s)))}{dW(s)}}}\\
&\leq\sqrt{\EE(\ind_{\Omega_{\mathrm{conv}} \setminus \Omega_{\delta}})} \sqrt{\EE\br{\pa{\int_{\hat{t}_{\delta}}^t \dotp{\sigma^{\top}(s,X(s))(X(s)-P_\calS(X(s)))}{dW(s)}}^2}} \\ 
&\leq\sqrt{\delta} \sqrt{\EE\br{\int_{\hat{t}_{\delta}}^t \norm{\sigma^{\top}(s,X(s))(X(s)-P_\calS(X(s)))}^2 ds}}\\
&\leq \sqrt{C^*\delta}\sqrt{\int_{\hat{t}_{\delta}}^t \sigma_{\infty}^2(s) ds}= \sqrt{C^*\delta}\int_{\hat{t}_{\delta}}^t \frac{\sigma_{\infty}^2(s)}{2\sqrt{\int_{\hat{t}_{\delta}}^s \sigma_{\infty}^2(u)du}} ds.  
\end{align*}
Set $C_*=\sqrt{C^*}$, and recall that $C_{\infty}=\sqrt{\int_0^{\infty}\sigma_{\infty}^2(s)ds}$. Thus, for every $t>\hat{t}_{\delta}$
\begin{align}\label{dist2*}
\hat{G}(t)&\leq\hat{G}(\hat{t}_{\delta}) -\int_{\hat{t}_{\delta}}^t\EE\br{(f(X(s))-\min f)\ind_{\Omega_{\delta}}}ds+\int_{\hat{t}_{\delta}}^t\sigma_{\infty}^2(s)ds+C_*\sqrt{\delta} \int_{\hat{t}_{\delta}}^t \frac{\sigma_{\infty}^2(s)}{2\sqrt{\int_{\hat{t}_{\delta}}^s \sigma_{\infty}^2(u)du}} ds.
\end{align}
Recall $h_{\delta}(t)$ from \eqref{eq:hdelta}. Then, we can rewrite \eqref{dist2*} as 
\begin{align}\label{dist2**}
\hat{G}(t)\leq\hat{G}(\hat{t}_{\delta}) -\int_{\hat{t}_{\delta}}^t\EE\br{(f(X(s))-\min f)\ind_{\Omega_{\delta}}}ds+\int_{\hat{t}_{\delta}}^t h_{\delta}(s) ds,\quad \forall t>\hat{t}_{\delta}.
\end{align}
Using that $f\in \EB^2([f\leq r])$, we obtain 
\[
\hat{G}(t)\leq \hat{G}(\hat{t}_{\delta}) -2\gamma\int_{\hat{t}_{\delta}}^t\hat{G}(s)ds+\int_{\hat{t}_{\delta}}^t h_{\delta}(s)ds,\quad \forall t> \hat{t}_{\delta} .
\]
Observe that $h_{\delta}\in \Lp^1([\hat{t}_{\delta},\infty[)$ since
\[
\int_{\hat{t}_{\delta}}^{\infty}h_{\delta}(s)ds\leq C_{\infty}^2+C_*C_{\infty}\sqrt{\delta}.
\] 
The goal now is to apply the comparison lemma to $\hat{G}(t)$ (see Lemma~\ref{comparison}) which necessitates to solve the following ODE 
\begin{equation*}
\begin{cases}
    y'(t)=-2\gamma y(t)+h_{\delta}(t) & t > \hat{t}_{\delta} \\
    y(\hat{t}_{\delta})=\hat{G}(\hat{t}_{\delta}) .
\end{cases}
\end{equation*}
Let $\lambda\in ]0,1[$. Using the integrating factor method, we obtain 
\begin{align*}
y(t)&= e^{-2\gamma(t-\hat{t}_{\delta})}y(\hat{t}_{\delta})+e^{-2\gamma t}\int_{\hat{t}_{\delta}}^{\hat{t}_{\delta}+\lambda(t-\hat{t}_{\delta})}h_{\delta}(s)e^{2\gamma s}ds+e^{-2\gamma t}\int_{\hat{t}_{\delta}+\lambda(t-\hat{t}_{\delta})}^{t} h_{\delta}(s)e^{2\gamma s}ds\\
&\leq e^{-2\gamma(t-\hat{t}_{\delta})}\EE(\hat{g}(\hat{t}_{\delta}))+e^{-2\gamma (1-\lambda)(t-\hat{t}_{\delta})}\int_{\hat{t}_{\delta}}^{\hat{t}_{\delta}+\lambda(t-\hat{t}_{\delta})}h_{\delta}(s) ds\\
&+h_{\delta}(\hat{t}_{\delta}+\lambda(t-\hat{t}_{\delta}))e^{-2\gamma t}\int_{\hat{t}_{\delta}+\lambda(t-\hat{t}_{\delta})}^{t} e^{2\gamma s}ds\\
&\leq e^{-2\gamma(t-\hat{t}_{\delta})}\EE(\hat{g}(\hat{t}_{\delta}))+e^{-2\gamma (1-\lambda)(t-\hat{t}_{\delta})}(C_{\infty}^2+C_*C_{\infty}\sqrt{\delta})+\frac{h_{\delta}(\hat{t}_{\delta}+\lambda(t-\hat{t}_{\delta}))}{2\gamma}.
\end{align*}
where in the first inequality, we used that $\sigma^2$ is decreasing and so is $h_{\delta}$. 
Lemma~\ref{comparison} then gives
\begin{multline*}
\EE\pa{\frac{\dist(X(t),\calS)^2}{2}\ind_{\Omega_{\delta}}}
\leq e^{-2\gamma(t-\hat{t}_{\delta})}\EE\pa{\frac{\dist(X(\hat{t}_{\delta}),\calS)^2}{2}}+e^{-2\gamma (1-\lambda)(t-\hat{t}_{\delta})}(C_{\infty}^2+C_*C_{\infty}\sqrt{\delta})\\
+\frac{h_{\delta}(\hat{t}_{\delta}+\lambda(t-\hat{t}_{\delta}))}{2\gamma}.
\end{multline*}
According to Corollary~\ref{equality} we obtain that for all $t>\hat{t}_{\delta}$
\begin{multline*}
\EE\pa{\frac{\dist(X(t),\calS)^2}{2}}
\leq e^{-2\gamma(t-\hat{t}_{\delta})}\EE\pa{\frac{\dist(X(\hat{t}_{\delta}),\calS)^2}{2}}+e^{-2\gamma (1-\lambda)(t-\hat{t}_{\delta})}(C_{\infty}^2+C_*C_{\infty}\sqrt{\delta})\\
+\frac{h_{\delta}(\hat{t}_{\delta}+\lambda(t-\hat{t}_{\delta}))}{2\gamma}+C_d\sqrt{\delta}.
\end{multline*}

\item \label{asin3*}
Denote $\tilde{g}(t)=\tilde{\phi}(X(t))=f(X(t))-\min f$ and $\tilde{G}(t)=\EE(\ind_{\Omega_{\delta}}\tilde{g}(t))$. By Proposition \ref{itoin}
\begin{multline}
\tilde{g}(t) 
\leq \tilde{g}(\hat{t}_{\delta})-\int_{\hat{t}_{\delta}}^t \dotp{\nabla f(X(s))}{\nabla \tilde{\phi}(X(s))} ds +\frac{L}{2}\int_{\hat{t}_{\delta}}^t \tr[\Sigma(s,X(s))]ds\\
+\ind_{\Omega_{\delta}}\int_{\hat{t}_{\delta}}^t \dotp{\sigma^{\top}(s,X(s))\nabla f(X(s))}{dW(s)} .
\end{multline}
Multiplying both sides by $\ind_{\Omega_{\delta}}$ and taking expectation we obtain
\begin{multline}\label{eq:Gtildebnd}
\tilde{G}(t)-\tilde{G}(\hat{t}_{\delta})
\leq-\EE\br{\int_{\hat{t}_{\delta}}^t \norm{\nabla f(X(s))}^2\ind_{\Omega_{\delta}} ds}
+\frac{L}{2}\EE\br{\int_{\hat{t}_{\delta}}^t \tr[\Sigma(s,X(s))]ds}\\
+\EE\br{\ind_{\Omega_{\delta}}\int_{\hat{t}_{\delta}}^t \dotp{\sigma^{\top}(s,X(s))\nabla f(X(s))}{dW(s)}}.
\end{multline}
On the other hand, we have
\begin{align*}
\EE\pa{\int_0^T \norm{\sigma^{\top}(s,X(s))\nabla f(X(s))}^2 ds}&\leq L^2\EE\pa{\int_0^T \sigma_{\infty}^2(s) \norm{X(s))-x^{\star} }^2 ds}\\
&\leq L^2C^{*}\int_0^{+\infty} \sigma_{\infty}^2(s)<\infty, \quad \forall T>0.
\end{align*}
Since $\EE\br{\int_{\hat{t}_{\delta}}^t\dotp{\sigma^{\top}(s,X(s))\nabla f(X(s))}{dW(s)}}=0$, we have 
\[
\EE\br{\ind_{\Omega_{\delta}}\int_{\hat{t}_{\delta}}^t\dotp{\sigma^{\top}(s,X(s))(\nabla f(X(s)))}{dW(s)}}=-\EE\br{\ind_{\Omega_{\mathrm{conv}} \setminus \Omega_{\delta}}\int_{\hat{t}_{\delta}}^t\dotp{\sigma^{\top}(s,X(s))(\nabla f(X(s))}{dW(s)}}.
\]
The last term can be bounded as
\begin{align*}
&\abs{\EE\br{\ind_{\Omega_{\mathrm{conv}} \setminus \Omega_{\delta}}\int_{\hat{t}_{\delta}}^t\dotp{\sigma^{\top}(s,X(s))(\nabla f(X(s)))}{dW(s)}}}\\
&\leq\sqrt{\EE(\ind_{\Omega_{\mathrm{conv}} \setminus \Omega_{\delta}})} 
\sqrt{\EE\br{\pa{\int_{\hat{t}_{\delta}}^t \dotp{\sigma^{\top}(s,X(s))(\nabla f(X(s)))}{dW(s)}}^2}} \\ 
&\leq L\sqrt{\delta} \sqrt{\EE\br{\int_{\hat{t}_{\delta}}^t \sigma_{\infty}^2(s)\norm{X(s)-x^{\star}}^2 ds }}\\
&\leq L\sqrt{C^{*}}\sqrt{\delta}\sqrt{\int_{\hat{t}_{\delta}}^t \sigma_{\infty}^2(s) ds  }= L\sqrt{C^{*}}\sqrt{\delta}\int_{\hat{t}_{\delta}}^t \frac{\sigma_{\infty}^2(s)}{2\sqrt{\int_{\hat{t}_{\delta}}^s \sigma_{\infty}^2(u)du}} ds.  
\end{align*}
Let us notice that if \eqref{eq:assumbeta} holds, then Proposition~\ref{beta} tells us that $\EE(f(X(t))-\min f)\leq K't^{\beta-1}$ with $\beta\in[0,1[$, and for some $K'>0$. In this case 
\[
\abs{\EE\br{\ind_{\Omega_{\mathrm{conv}} \setminus \Omega_{\delta}}\int_{\hat{t}_{\delta}}^t\dotp{\sigma^{\top}(s,X(s))(\nabla f(X(s)))}{dW(s)}}}
\leq \sqrt{2LK'}\sqrt{\delta}\int_{\hat{t}_{\delta}}^t \frac{\sigma_{\infty}^2(s)s^{\beta-1}}{2\sqrt{\int_{\hat{t}_{\delta}}^s \sigma_{\infty}^2(u)u^{\beta-1}du}} ds.
\]
Injecting this into \eqref{eq:Gtildebnd}, we have for all $ t>\hat{t}_{\delta}$
\begin{multline}\label{dist2**1}
\tilde{G}(t)
\leq\tilde{G}(\hat{t}_{\delta}) -\EE\br{\int_{\hat{t}_{\delta}}^t\norm{\nabla f(X(s))}^2\ind_{\Omega_{\delta}}ds}+\frac{L}{2}\int_{\hat{t}_{\delta}}^t\sigma_{\infty}^2(s)ds \\
+
\begin{cases}
C_K\sqrt{\delta} \int_{\hat{t}_{\delta}}^t \frac{\sigma_{\infty}^2(s)s^{\beta-1}}{2\sqrt{\int_{\hat{t}_{\delta}}^s \sigma_{\infty}^2(u)u^{\beta-1}du}} ds,\quad \forall t>\hat{t}_{\delta} & \text{if \eqref{eq:assumbeta} holds}, \\
C_{**}\sqrt{\delta} \int_{\hat{t}_{\delta}}^t \frac{\sigma_{\infty}^2(s)}{2\sqrt{\int_{\hat{t}_{\delta}}^s \sigma_{\infty}^2(u)du}} ds & \text{otherwise} ,
\end{cases}
\end{multline}
where $C_{**}=L\sqrt{C^{*}}$, $C_K=\sqrt{2LK'}$ and recall that $C_{\infty}=\sqrt{\int_0^{\infty}\sigma_{\infty}^2(s)ds}$. Recalling $l_{\delta}(t)$ and $k_{\delta}(t)$ from \eqref{eq:ldelta}-\eqref{eq:kdelta}, and by Fubini's theorem, \eqref{dist2**1} becomes
\begin{align}\label{dist2**11}
\tilde{G}(t)&\leq\tilde{G}(\hat{t}_{\delta}) -\int_{\hat{t}_{\delta}}^t\EE\br{\norm{\nabla f(X(s))}^2\ind_{\Omega_{\delta}}}ds+
\begin{cases}
\int_{\hat{t}_{\delta}}^t k_{\delta}(s) ds & \text{if \eqref{eq:assumbeta} holds}, \\
\int_{\hat{t}_{\delta}}^t l_{\delta}(s) ds & \text{otherwise} .
\end{cases}
\end{align}
Since $f \in \Loj^{1/2}(\calS)$, there exists $\mu>0$ such that 
\begin{align}\label{dist2f}
\tilde{G}(t)&\leq\tilde{G}(\hat{t}_{\delta}) -\mu^2\int_{\hat{t}_{\delta}}^t \tilde{G}(s) ds+
\begin{cases}
\int_{\hat{t}_{\delta}}^t k_{\delta}(s) ds & \text{if \eqref{eq:assumbeta} holds}, \\
\int_{\hat{t}_{\delta}}^t l_{\delta}(s) ds & \text{otherwise} .
\end{cases}
\end{align}
To get an explicit bound in \eqref{dist2f}, we use Lemma~\ref{comparison}, which involves solving
\begin{enumerate}[label=(E.\arabic*),start=2]
\item \label{e2}\begin{center}
$\begin{cases}
    y'(t)&=-\mu^2 y(t)+l_{\delta}(t),\quad t>\hat{t}_{\delta}\\
    y(\hat{t}_{\delta})&=\tilde{G}(\hat{t}_{\delta})
\end{cases}$
\end{center}

\item \label{e3}\begin{center}
$\begin{cases}
    y'(t)&=-\mu^2 y(t)+k_{\delta}(t),\quad t>\hat{t}_{\delta}\\
    y(\hat{t}_{\delta})&=\tilde{G}(\hat{t}_{\delta})
\end{cases}$
\end{center}
\end{enumerate}

Let $\lambda\in ]0,1[$. Using the integrating factor method as in \ref{asin1*}, we get for \ref{e2}
\begin{align*}
y(t)
\leq 
e^{-\mu^2(t-\hat{t}_{\delta})}\EE(\tilde{g}(\hat{t}_{\delta})) +
\begin{cases}
e^{-\mu^2 (1-\lambda)(t-\hat{t}_{\delta})}\pa{\frac{LC_{\infty}^2}{2}+C_{**}C_{\infty}\sqrt{\delta}}+\frac{l_{\delta}(\hat{t}_{\delta}+\lambda(t-\hat{t}_{\delta}))}{\mu^2} & \text{for \ref{e2}}\\
e^{-\mu^2 (1-\lambda)(t-\hat{t}_{\delta})}\pa{\frac{LC_{\infty}^2}{2}+C_KC_{\infty}\sqrt{\hat{t}_{\delta}^{\beta-1}}\sqrt{\delta}} + \frac{k_{\delta}(\hat{t}_{\delta}+\lambda(t-\hat{t}_{\delta}))}{\mu^2} & \text{for \ref{e3}} .
\end{cases}
\end{align*}
Using Lemma~\ref{comparison} and Corollary~\ref{equality}
\begin{align*}
\EE\br{f(X(t))-\min f}
&\leq y(t) + C_f\sqrt{\delta} \\
&\leq e^{-\mu^2(t-\hat{t}_{\delta})}\EE\br{f(X(\hat{t}_{\delta}))-\min f}+C_f\sqrt{\delta} \\
&+
\begin{cases}
e^{-\mu^2 (1-\lambda)(t-\hat{t}_{\delta})}\pa{\frac{LC_{\infty}^2}{2}+C_{**}C_{\infty}\sqrt{\delta}}+\frac{l_{\delta}(\hat{t}_{\delta}+\lambda(t-\hat{t}_{\delta}))}{\mu^2} & \text{for \ref{e2}}\\
e^{-\mu^2 (1-\lambda)(t-\hat{t}_{\delta})}\pa{\frac{LC_{\infty}^2}{2}+C_KC_{\infty}\sqrt{\hat{t}_{\delta}^{\beta-1}}\sqrt{\delta}} + \frac{k_{\delta}(\hat{t}_{\delta}+\lambda(t-\hat{t}_{\delta}))}{\mu^2} & \text{for \ref{e3}} .
\end{cases}
\end{align*}
\end{enumerate}

\item $p > 2$:
\begin{enumerate}[label=(\alph*)]
\item We embark from inequality \eqref{dist2**} and we now use that $f\in \EB^p([f\leq r])$ with $p>2$, to get
\begin{align}
\hat{G}(t)&\leq\hat{G}(\hat{t}_{\delta}) -\int_{\hat{t}_{\delta}}^t\EE\br{(f(X(s))-\min f)\ind_{\Omega_{\delta}}}ds+\int_{\hat{t}_{\delta}}^t h_{\delta}(s) ds\\
&\leq \hat{G}(\hat{t}_{\delta}) -2^{p/2}\gamma\int_{\hat{t}_{\delta}}^t\hat{G}(s)^{p/2}+\int_{\hat{t}_{\delta}}^t h_{\delta}(s) ds.\nonumber
\end{align}
In the last inequality, we used that $p>2$ and Jensen's inequality.

\smallskip

The idea is again to use the comparison lemma (Lemma~\ref{comparison}), which will now involve solving the Cauchy problem \ref{c1}, and finally invoke Corollary~\ref{equality}. 
%
%
\item The reasoning is similar to the previous point using now that $f \in \Loj^{q}(\calS)$ and the computations of \ref{asin1*}\ref{asin3*}. We omit the details for the sake of brevity.
\end{enumerate}
\end{enumerate}
\end{proof}

\section{SDE for nonsmooth structured convex optimization}\label{nonsmooth_structured}

In this section, we turn to the composite convex minimization problem with additive structure
\begin{equation}\label{P01}
    \min_{x\in\R^d} f(x)+g(x),
\end{equation}
where 
\begin{align}\label{H0'}
\begin{cases}
\text{$f \in C^{1,1}_L(\R^d) \cap \Gamma_0(\R^d)$ and $g \in \Gamma_0(\R^d)$}; \\
\calS = \argmin(f+g) \neq \emptyset. \tag{$\mathrm{H}_0'$} 
\end{cases}
\end{align} 
The importance of this class of problems comes from its wide spectrum of applications ranging from data processing, to machine learning and statistics to name a few. 
%
\smallskip

\noindent We consider two different approaches leading to different SDE's. The first is based on a fixed point argument and the use of the notion of cocoercive monotone operator. The second approach is based on a regularization/smoothing argument, for instance the Moreau envelope.

\subsection{Fixed point approach via cocoercive monotone operators}

Let us start with some classical definitions concerning monotone operators.

\begin{definition}
An operator $A:\R^d \to \calP(\R^d)$ is monotone if
\[
\dotp{u-v}{x-y} \geq 0, \quad \quad \forall (x,u) \in \gra(A), (y,v) \in \gra(A).
\]
It is maximally monotone if there exists no monotone operator whose graph properly contains $\gra(A)$.
Moreover, $A$ is $\gamma-$strongly monotone with modulus $\gamma > 0$ if
\[
\dotp{u-v}{x-y}  \geq \gamma\norm{x-y}^2, \quad \forall (x,u) \in \gra(A), (y,v) \in \gra(A).
\]
\end{definition}
\begin{remark}
If $A$ is maximally monotone and strongly monotone, then $A^{-1}(0)\eqdef  \{x\in\R^d: A(x)=0\}$ is non-empty and reduced to a singleton.
\end{remark}

\begin{remark}
The subdifferential operator $\partial g$ of $g \in \Gamma_0(\R^d)$ is maximally monotone.
\end{remark}

\begin{definition}
A single-valued operator $M:\R^d\rightarrow\R^d$ is cocoercive with constant $\rho>0$ if 
\[
\langle M(x)-M(y),x-y\rangle \geq \rho\norm{M(x)-M(y)}^2,\quad \forall x,y\in \R^d.
\]
\end{definition}
\begin{remark}
It is clear that a cocoercive operator is $\rho^{-1}-$Lipschitz continuous. In turn, a cocoercive operator is maximally monotone. 
\end{remark}

\begin{remark}
If $f \in C_L^{1,1}(\R^d)\cap \Gamma_0(\R^d)$, then the operator $\nabla f$ is $L^{-1}-$cocoercive.
\end{remark}

\medskip

Our interest now is to solve the structured monotone inclusion problem 
\[
0 \in A(x)+B(x), 
\]
where $A$ is maximally monotone, and $B$ is cocoercive with $(A+B)^{-1}(0) \neq \emptyset$. This is of course a generalization of \eqref{P01} by taking $A=\partial g$ and $B=\nabla f$.

A favorable situation occurs when one can compute the resolvent operator of $A$
\[
J_{\mu A} = (I + \mu A )^{-1},  \quad \mu > 0 .
\]
In this case, we can develop a strategy parallel to the one which consists in replacing a maximally monotone operator by its Yosida approximation. 
Indeed, given $\mu>0$, we have
\begin{equation}\label{eq.zero_operateur}
(A+B) (x) \ni 0 \iff x- J_{\mu A}( x-\mu B(x)) =0\iff M_{A,B,\mu}(x) =0,
\end{equation}
where $M_{A,B,\mu}: \R^d \to \R^d$ is the single-valued operator defined by 
\begin{equation}\label{Max_Mon_structured}
M_{A,B,\mu}(x) = \frac{1}{\mu} \pa{x- J_{\mu A}( x-\mu B(x))}.
\end{equation}

$M_{A,B,\mu}$ is closely tied to the well-known forward-backward fixed point operator. Moreover, when $B=0$, $M_{A,B,\mu}=\frac{1}{\mu} \pa{I - J_{\mu A}}$ which is nothing but the Yosida regularization of $A$ with index $\mu$. As a remarkable property, for the $\mu$ parameter properly set, the operator $M_{A,B,\mu}$ is cocoercive. This is made precise in the following result. 


\begin{proposition}\label{cocoer} \cite[Lemma~B.1]{cocoercive} Let
$A:\R^d\to \calP(\R^d)$ be a general maximally monotone operator, and let  $B:\R^d\to \R^d$ be a monotone operator which is $\lambda$-cocoercive. Assume that $\mu  \in ]0, 2\lambda[ $. Then,
$M_{A,B,\mu}$ is $\rho$-cocoercive with 
\[
\rho =\mu\pa{1-\frac{\mu}{4\lambda}}.
\]
\end{proposition}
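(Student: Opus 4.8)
The plan is to verify the cocoercivity inequality for $M \eqdef M_{A,B,\mu}$ directly from the definition, exploiting monotonicity of $A$ (through its resolvent) and the $\lambda$-cocoercivity of $B$. First I would introduce, for fixed $x,y \in \R^d$, the points $p = J_{\mu A}(x - \mu B(x))$ and $p' = J_{\mu A}(y - \mu B(y))$, so that by definition of the resolvent $\mu M(x) = x - p$ and $\mu M(y) = y - p'$, and moreover $\frac{1}{\mu}\bigl((x-\mu B(x)) - p\bigr) \in A(p)$ and $\frac{1}{\mu}\bigl((y-\mu B(y)) - p'\bigr) \in A(p')$. Monotonicity of $A$ applied to these two pairs gives
\[
\dotp{(x - \mu B(x) - p) - (y - \mu B(y) - p')}{p - p'} \geq 0 .
\]

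Next I would rewrite $p - p' = (x-y) - \mu(M(x) - M(y))$ and substitute into the inequality above, expanding all inner products; this turns the monotonicity estimate into a relation between $\dotp{M(x)-M(y)}{x-y}$, $\norm{M(x)-M(y)}^2$, $\norm{x-y}^2$, and the cross terms involving $B(x)-B(y)$. In parallel I would record the $\lambda$-cocoercivity bound $\dotp{B(x)-B(y)}{x-y} \geq \lambda \norm{B(x)-B(y)}^2$. The goal is then to combine these two inequalities with suitable nonnegative weights (determined by $\mu$ and $\lambda$) and complete a square in the quantity $B(x)-B(y) - \text{(something)}(M(x)-M(y))$ or a similar combination, so that the leftover terms assemble exactly into
\[
\dotp{M(x)-M(y)}{x-y} \geq \mu\Bigl(1 - \frac{\mu}{4\lambda}\Bigr)\norm{M(x)-M(y)}^2 .
\]
The restriction $\mu \in \,]0, 2\lambda[$ is precisely what guarantees $\rho = \mu(1 - \mu/4\lambda) > 0$ and that the algebraic weights in the combination are admissible (nonnegative and the discriminant of the completed square has the right sign).

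The main obstacle I anticipate is purely algebraic bookkeeping: choosing the correct multiplier for the cocoercivity inequality so that the cross terms $\dotp{B(x)-B(y)}{M(x)-M(y)}$ and $\dotp{B(x)-B(y)}{x-y}$ cancel or combine cleanly, and identifying the exact square to complete so that the constant $1 - \mu/(4\lambda)$ emerges rather than some weaker constant. A convenient way to organize this is to set $u = x - y$, $b = B(x) - B(y)$, $m = M(x) - M(y)$, note $\mu m = u - (p - p')$, and treat everything as quadratic forms in $u, b, m$ subject to the two scalar inequalities; the claim reduces to a finite-dimensional $3\times 3$ semidefiniteness check with the parameter $\mu/\lambda$. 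Since this is exactly the content of \cite[Lemma~B.1]{cocoercive}, I would either reproduce that computation or simply cite it; for completeness the short route is: apply monotonicity of $A$ at $p,p'$, add $\mu\lambda$ times the cocoercivity of $B$ and $\mu^2/(4\lambda)$ times $\norm{b - \tfrac{1}{2\lambda}( \cdot )}^2 \geq 0$ (the precise form fixed by matching coefficients), and collect terms.
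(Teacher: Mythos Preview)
The paper does not give its own proof of this proposition; it merely records the statement and cites \cite[Lemma~B.1]{cocoercive}. So there is nothing in the paper to compare your argument against.

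That said, your direct approach is correct and in fact executes cleanly. With your notation $u=x-y$, $b=B(x)-B(y)$, $m=M(x)-M(y)$, the resolvent identity gives $M(x)-B(x)\in A(p)$ and $M(y)-B(y)\in A(p')$, so monotonicity of $A$ at $(p,p')$ reads
\[
\dotp{m-b}{\,u-\mu m\,}\geq 0,
\quad\text{i.e.}\quad
\dotp{m}{u}\;\geq\;\mu\norm{m}^2+\dotp{b}{u}-\mu\dotp{b}{m}.
\]
Plugging in $\lambda$-cocoercivity $\dotp{b}{u}\geq\lambda\norm{b}^2$ and completing the square
\[
\lambda\norm{b}^2-\mu\dotp{b}{m}
=\lambda\Bigl\lVert b-\tfrac{\mu}{2\lambda}m\Bigr\rVert^2-\tfrac{\mu^2}{4\lambda}\norm{m}^2
\;\geq\;-\tfrac{\mu^2}{4\lambda}\norm{m}^2
\]
gives exactly $\dotp{m}{u}\geq \mu\bigl(1-\tfrac{\mu}{4\lambda}\bigr)\norm{m}^2$. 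So the ``algebraic bookkeeping'' you flagged as the main obstacle is in fact minimal: the correct multiplier on the cocoercivity inequality is simply $1$, and the square to complete is $\lambda\norm{b-\tfrac{\mu}{2\lambda}m}^2$. The condition $\mu\in\,]0,2\lambda[$ is only needed to ensure $\rho>0$; the inequality itself holds for all $\mu>0$.
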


\medskip

We first focus on finding the zeros of $M$, where
\begin{align}
M:\R^d\rightarrow\R^d \text{ is cocoercive and } M^{-1}(0) \neq \emptyset. \tag{$\mathrm{H}_{0}^M$}\label{H0M}
\end{align} 
We will then specialize our results to the case of a structured operator of the form $M_{A,B,\mu}$.

\medskip

Our goal is to handle the situation where $M$ can be evaluated up to a stochastic error. We therefore consider the following SDE, defined for (deterministic) initial data $X_0\in\R^d$,
\begin{equation} \label{CSGD1}\tag{$\mathrm{SDE}^M$}
\begin{cases}
\begin{aligned}
dX(t)&=-M(X(t))dt+\sigma(t,X(t))dW(t), \quad t\geq 0\\
X(0)&=X_0 .
\end{aligned}
\end{cases}
\end{equation}
As in Section~\ref{problem_statement}, we will assume that $W$ is a $\calF_t$-adapted $m-$dimensional Brownian motion, and the volatility matrix $\sigma:\R_+\times\R^d\rightarrow\R^{d\times m}$ satisfies \eqref{H}.

\smallskip

Let us now state the natural extensions of our main results to this situation.
\begin{theorem}\label{converge21}
Let $M:\R^d\rightarrow\R^d$ be a cocoercive operator. Consider the stochastic differential equation \eqref{CSGD1} under the hypotheses \eqref{H0M} and \eqref{H}. Then, there exists a unique solution $X\in S_d^{\nu}$, for every $\nu\geq 2$.  Moreover, if $\sigma_{\infty}\in \Lp^2(\R_+)$, then: 
 \begin{enumerate}[label=(\roman*)]
     \item $\sup_{t\geq 0}\EE[\norm{X(t)}^2]<\infty$.
     
     \vspace{1mm}
    
     \item  $\forall x^{\star}\in M^{-1}(0)$, $\lim_{t\rightarrow\infty} \norm{X(t)-x^{\star}}$ exists a.s. and $\sup_{t\geq 0}\norm{X(t)}<\infty$ a.s.
     
       \vspace{1mm}
       
     \item  \label{iiconv1}
     $\lim_{t\rightarrow \infty}\norm{M(X(t))}=0$ a.s.
     
       \vspace{1mm}
     \item  There exists an $M^{-1}(0)-$valued random variable $x^{\star}$ such that $\lim_{t\rightarrow\infty} X(t) = x^{\star}$ a.s.
 \end{enumerate}
\end{theorem}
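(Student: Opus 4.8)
The plan is to mirror the three-step architecture of the proof of Theorem~\ref{converge2}, replacing $\nabla f$ by $M$ throughout and invoking cocoercivity wherever convexity was used before. Existence and uniqueness of $X \in S_d^{\nu}$ for every $\nu \geq 2$ is immediate from Theorem~\ref{teoexistencia}: a $\rho$-cocoercive operator is single-valued and $\rho^{-1}$-Lipschitz continuous, and $\sigma$ satisfies \eqref{H}, so the hypotheses of that theorem hold under \eqref{H0M} and \eqref{H}.

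For claims (i)--(ii), fix $x^{\star} \in M^{-1}(0)$ and apply It\^o's formula (Proposition~\ref{itos}) to the anchor function $\phi(x) = \frac{1}{2}\norm{x - x^{\star}}^2$, obtaining the exact analogue of \eqref{basic_Ito1} with $\nabla f$ replaced by $M$. The cross term is handled by cocoercivity: since $M(x^{\star}) = 0$,
\[
\dotp{M(X(s))}{X(s) - x^{\star}} = \dotp{M(X(s)) - M(x^{\star})}{X(s) - x^{\star}} \geq \rho\,\norm{M(X(s))}^2 \geq 0 .
\]
Taking expectations, bounding the trace term by $\sigma_{\infty}^2(s)$ and using $\sigma_{\infty} \in \Lp^2(\R_+)$ gives $\sup_{t \geq 0}\EE[\norm{X(t)}^2] < \infty$, which in turn makes the stochastic integral a square-integrable martingale. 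Theorem~\ref{impp} then yields that $\lim_{t\to\infty}\norm{X(t) - x^{\star}}$ exists a.s. and that $\int_0^{\infty} \norm{M(X(s))}^2\,ds < \infty$ a.s.; note that here the $\Lp^2(\R_+)$-integrability of $M(X(\cdot))$ drops out directly from the lower bound $\rho\norm{M(X(s))}^2$, so no substitute for Corollary~\ref{2L} (which used convexity) is needed. A separability argument over a countable dense subset of the closed set $M^{-1}(0)$ upgrades this to: almost surely, $\lim_{t\to\infty}\norm{X(t) - x^{\star}}$ exists for \emph{every} $x^{\star} \in M^{-1}(0)$, and together with path continuity this gives $\sup_{t\geq 0}\norm{X(t)} < \infty$ a.s.

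For claim (iii), work on the a.s. event where $\norm{M(X(\cdot))}^2 \in \Lp^1(\R_+)$ and where the martingale $M_t = \int_0^t \sigma(s,X(s))\,dW(s)$ converges a.s. (Theorem~\ref{doob}, valid since $\sigma_{\infty} \in \Lp^2(\R_+)$). Arguing by contradiction as in Theorem~\ref{converge2}\ref{iiconv}: if $\limsup_{t\to\infty}\norm{M(X(t))} > 0$, choose $\delta$ strictly between $\liminf_{t\to\infty}\norm{M(X(t))} = 0$ and the $\limsup$, and a sequence $t_k \to \infty$ with $t_{k+1} - t_k > 1$ and $\norm{M(X(t_k))} > \delta$; bound the increments $\norm{X(t) - X(t_k)}$ on short intervals anchored at the $t_k$ (of length on the order of $\varepsilon$) using $\norm{M(X(\cdot))}^2 \in \Lp^1(\R_+)$ for the drift part and the a.s. convergence of $M_t$ for the martingale part, then invoke the $\rho^{-1}$-Lipschitz continuity of $M$ to get $\norm{M(X(t))} \geq \delta/2$ on each such interval, which forces $\int_0^{\infty} \norm{M(X(s))}^2\,ds = \infty$, a contradiction. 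Finally, for claim (iv), on the full-measure event built above let $\bar{x}$ be any sequential limit point of $X(t)$, say $X(t_k) \to \bar{x}$; by continuity of $M$ and (iii), $M(\bar{x}) = \lim_k M(X(t_k)) = 0$, hence $\bar{x} \in M^{-1}(0)$. Since $\lim_{t\to\infty}\norm{X(t) - x^{\star}}$ exists for every $x^{\star} \in M^{-1}(0)$, Opial's Lemma applies and produces an $M^{-1}(0)$-valued random variable $x^{\star}$ with $\lim_{t\to\infty}X(t) = x^{\star}$ a.s.

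The step I expect to demand the most care is claim (iii): the Barbalat-type (eventual uniform continuity) argument relies on a pathwise increment estimate for $X$ that must combine the drift contribution (controlled by the $\Lp^1(\R_+)$-property of $\norm{M(X(\cdot))}^2$) with the martingale contribution (controlled by the a.s. convergence of $M_t$), and one must check that substituting the cocoercivity-induced Lipschitz constant $\rho^{-1}$ for the constant $L$ of the smooth case leaves the contradiction argument intact. Everything else is a faithful transcription of the proof of Theorem~\ref{converge2}; the conceptual point is simply that cocoercivity provides at once the nonnegativity of the cross term (as plain monotonicity would) and the quadratic lower bound $\rho\norm{M(X(s))}^2$ that previously required convexity together with Corollary~\ref{2L}.
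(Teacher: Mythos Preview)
Your proposal is correct and follows exactly the approach sketched in the paper's own proof: transcribe the argument of Theorem~\ref{converge2} with $\nabla f$ replaced by $M$, using cocoercivity to obtain both the nonnegativity of the cross term and the direct bound $\dotp{M(X(s))}{X(s)-x^{\star}}\geq\rho\norm{M(X(s))}^2$ (which replaces the combination of convexity and Corollary~\ref{2L}), and then invoke continuity of $M$ for Opial's lemma. Your remark that the Lipschitz constant $\rho^{-1}$ simply replaces $L$ in the Barbalat-type contradiction is exactly right, and the separability argument carries over unchanged since $M^{-1}(0)$ is closed by continuity of $M$.
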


\begin{proof}
Existence and uniqueness follow from Theorem~\ref{teoexistencia} since $M$ is Lipschitz continuous and $\sigma$ verifies \eqref{H}. The proof of the first two items remains the same as for Theorem \ref{converge2}. For the third item, we use the cocoercivity of $M$ instead of the convexity of $f$ and Corollary \ref{2L} to prove that $\lim_{t\rightarrow \infty}\norm{M(X(t))}=0$ a.s.  For the last item,  it suffices to use that the operator $M$ is continuous (since it is Lipschitz continuous) to conclude with Opial's Lemma.
\end{proof}

\begin{theorem}\label{importante01}
Let $M:\R^d\rightarrow\R^d$ be a $\rho-$cocoercive operator. Let us make the assumptions \eqref{H0M} and \eqref{H}. Let  $X\in S_d^2$ be the solution of \eqref{CSGD1} with initial condition $X_0$. Then the following properties are satisfied:
\begin{enumerate}[label=(\roman*)]
\item \label{0i1} Let $\overline{M\circ X}(t)\eqdef t^{-1}\int_0^t M(X(s))ds$ and $\overline{\norm{M(X(t))}^2}\eqdef t^{-1}\int_0^t \norm{M(X(s))}^2ds$. We have
\begin{equation}\label{eq:rateMbar}
\EE\br{\norm{\overline{M\circ X}(t)}^2}\leq 
\EE\br{\overline{\norm{M(X(t))}^2}}\leq  \frac{\dist(X_0,M^{-1}(0))^2}{2\rho t}+\frac{\sigma_*^2}{2\rho}
,\quad \forall t> 0.
\end{equation}
Besides, if $\sigma_{\infty}$ is $\Lp^2(\R_+)$, then 
\begin{equation}\label{eq:rateMbarL2}
\EE\br{\norm{\overline{M\circ X}(t)}^2}\leq 
\EE\br{\overline{\norm{M(X(t))}^2}}=\calO\pa{\frac{1}{t}}, \quad \forall t> 0.
\end{equation}

\item \label{0ii1} If $M$ is $\gamma-$strongly monotone, then $M^{-1}(0)=\{x^{\star}\}$ and 
\begin{equation}
\EE\pa{\frac{\norm{X(t)-x^{\star}}^2}{2}}\leq \frac{\norm{X_0-x^{\star}}^2}{2}e^{-2\gamma t}+\frac{\sigma_*^2}{4\gamma}
,\quad \forall t\geq 0.\end{equation}
If, moreover, $\sigma_{\infty}$ is decreasing and vanishes at infinity, then for every $ \lambda\in ]0,1[$
\begin{equation}
\EE\pa{\frac{\norm{X(t)-x^{\star}}^2}{2}}\leq \frac{\norm{X_0-x^{\star}}^2}{2}e^{-2\gamma t}+\frac{\sigma_*^2}{4}e^{-2\gamma t(1-\lambda)}+\frac{\sigma_{\infty}^2(\lambda t)}{2}, \quad \forall t > 0.
\end{equation}
\end{enumerate}
\end{theorem}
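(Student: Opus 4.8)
The plan is to follow the architecture of the proof of Theorem~\ref{importante0}, replacing the convexity inequality $\dotp{\nabla f(x)}{x-x^{\star}} \geq f(x)-\min f \geq 0$ used there by the cocoercivity estimate: for any $x^{\star} \in M^{-1}(0)$ one has $M(x^{\star})=0$, so
\[
\dotp{M(x)}{x-x^{\star}} = \dotp{M(x)-M(x^{\star})}{x-x^{\star}} \geq \rho\norm{M(x)}^2 \geq 0, \qquad \forall x \in \R^d.
\]
Everything else is a mechanical transcription. Throughout I use that $X\in S_d^2$ (guaranteed by Theorem~\ref{converge21}), which, together with boundedness of $\sigma$, gives $\EE\pa{\int_0^T \norm{\sigma^{\top}(s,X(s))(X(s)-x^{\star})}^2 ds} \leq \sigma_*^2\, T\, \EE\pa{\sup_{s\in[0,T]}\norm{X(s)-x^{\star}}^2} < \infty$, so the It\^o stochastic integral appearing below is a square-integrable continuous martingale with zero mean.

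For item~\ref{0i1}, fix $x^{\star} \in M^{-1}(0)$ and apply It\^o's formula (Proposition~\ref{itos}) to the anchor $\phi(x)=\frac{1}{2}\norm{x-x^{\star}}^2$ (so $\nabla\phi(x)=x-x^{\star}$, $\nabla^2\phi = I_d$). Taking expectations kills the martingale term and yields
\[
\EE[\phi(X(t))]-\phi(X_0) = -\EE\pa{\int_0^t \dotp{M(X(s))}{X(s)-x^{\star}}ds} + \frac{1}{2}\EE\pa{\int_0^t \tr[\Sigma(s,X(s))]ds}.
\]
Bounding the trace by $\sigma_*^2$ and the inner product from below by $\rho\norm{M(X(s))}^2$, using $\EE[\phi(X(t))]\geq 0$, and dividing by $\rho t$, we get $\frac{1}{t}\EE\pa{\int_0^t\norm{M(X(s))}^2 ds} \leq \frac{\norm{X_0-x^{\star}}^2}{2\rho t}+\frac{\sigma_*^2}{2\rho}$; taking the infimum over $x^{\star}\in M^{-1}(0)$ (legitimate since the left side is independent of $x^{\star}$) gives \eqref{eq:rateMbar}. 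When $\sigma_{\infty}\in\Lp^2(\R_+)$, replace $\frac{\sigma_*^2}{2}t$ by $\frac{1}{2}\int_0^{\infty}\sigma_{\infty}^2$ before dividing, which produces \eqref{eq:rateMbarL2}. Finally, $\norm{\overline{M\circ X}(t)}^2 \leq \overline{\norm{M(X(t))}^2}$ is Jensen's inequality for the convex map $\norm{\cdot}^2$ against the normalized Lebesgue measure on $[0,t]$.

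For item~\ref{0ii1}, strong monotonicity together with maximal monotonicity of $M$ (a cocoercive operator is maximally monotone) forces $M^{-1}(0)=\{x^{\star}\}$, as recalled in the remarks. Repeating the same It\^o computation with the same anchor and now using $\dotp{M(X(s))}{X(s)-x^{\star}} \geq \gamma\norm{X(s)-x^{\star}}^2 = 2\gamma\,\phi(X(s))$, the function $G(t)\eqdef\EE[\phi(X(t))]$ obeys $G(t) \leq G(0) - 2\gamma\int_0^t G(s)\,ds + \frac{\sigma_*^2}{2}t$. Invoking the comparison Lemma~\ref{comparison} with the linear ODE $y'=-2\gamma y + \frac{\sigma_*^2}{2}$, $y(0)=\frac{\norm{X_0-x^{\star}}^2}{2}$, and solving it by the integrating factor method gives the first bound. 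If $\sigma_{\infty}$ is decreasing and vanishes at infinity, bound the trace by $\sigma_{\infty}^2(s)$ instead, apply Lemma~\ref{comparison} to $y'=-2\gamma y + \frac{\sigma_{\infty}^2(t)}{2}$, and split $\int_0^t \frac{\sigma_{\infty}^2(s)}{2}e^{2\gamma s}ds$ at $\lambda t$ exactly as in the proof of Theorem~\ref{importante0}\ref{0ii}, using $\sigma_{\infty}^2(s)\leq\sigma_*^2$ on $[0,\lambda t]$ and $\sigma_{\infty}^2(s)\leq\sigma_{\infty}^2(\lambda t)$ on $[\lambda t,t]$. There is no genuine obstacle here; the only points needing a little care are the justification that the It\^o integral is a zero-mean martingale (which is where $X\in S_d^2$ and the boundedness of $\sigma$ enter) and carrying the constant $\rho$ correctly through the division in item~\ref{0i1}.
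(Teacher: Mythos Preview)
Your proposal is correct and takes exactly the same approach as the paper, whose entire proof reads ``Analogous to Theorem~\ref{importante0}.'' You have spelled out precisely that analogy: replace the convexity inequality $\dotp{\nabla f(x)}{x-x^{\star}}\geq f(x)-\min f$ by the cocoercivity bound $\dotp{M(x)}{x-x^{\star}}\geq\rho\norm{M(x)}^2$ in part~\ref{0i1}, and by the strong monotonicity bound $\dotp{M(x)}{x-x^{\star}}\geq\gamma\norm{x-x^{\star}}^2$ in part~\ref{0ii1}, with the rest of the It\^o/comparison-lemma machinery carried over verbatim.
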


\begin{proof}
Analogous to Theorem~\ref{importante0}.
\end{proof}

\medskip

We now turn to the local convergence properties. To this end, we need an extension of the H\"olderian error bound inequality (or {\L}ojasiewicz inequality) to the operator setting. For convex functions, it is known that error bound inequalities are closely related to metric subregularity of the subdifferential \cite{Geoffroy08,Kruger15a,Kruger15b}. This leads to the following definition.
\begin{definition} 
Let $M:\R^d\rightarrow\R^d$ be a single-valued operator. We say that $M$ satisfies the H\"older metric subregularity property with exponent $p\geq 2$ at $x^{\star}\in M^{-1}(0)$ if there exists $\gamma > 0$ and a neighbourhood $\calV_{x^{\star}}$ such that
\begin{equation}
\norm{M(x)}^2\geq \gamma\dist(x,M^{-1}(0))^p, \quad \forall x\in \calV_{x^{\star}}.
\end{equation}
If this inequality holds for any $x^{\star}\in M^{-1}(0)$ with the same $\gamma$, we will write $M\in \mathrm{HMS}^p(\R^d)$.
\end{definition}

\begin{theorem}\label{importante31}
Let $M$ be a $\rho-$cocoercive operator such that $M\in \mathrm{HMS}^2(\R^d)$. Let $X\in S_d^2$ be the solution of  \eqref{CSGD1} under the hypotheses \eqref{H0M}, \eqref{H}. Suppose  that $\sigma_{\infty}\in \Lp^2(\R_+)$ ($C_{\infty}\eqdef \norm{\sigma_{\infty}}_{\Lp^2(\R_+)}$) and $\sigma_{\infty}$ is decreasing. 
Consider also the positive constants $C,C_d,\gamma$. Then, for all $\delta>0$, there exists $\hat{t}_{\delta}>0$ such that for every $\lambda \in (0,1)$: 
\begin{align}
\EE\pa{\frac{\dist(X(t),M^{-1}(0))^2}{2}}&\leq e^{-2\gamma\rho(t-\hat{t}_{\delta})}\EE\pa{\frac{\dist(X(\hat{t}_{\delta}),M^{-1}(0))^2}{2}}\nonumber\\
&+e^{-2\gamma\rho (1-\lambda)(t-\hat{t}_{\delta})}(C_{\infty}^2+C_{\infty}C\sqrt{\delta})\\
&+\frac{h_{\delta}(\hat{t}_{\delta}+\lambda(t-\hat{t}_{\delta}))}{2\gamma\rho}+C_d\sqrt{\delta}, \quad \forall t>\hat{t}_{\delta},\nonumber
\end{align}
where $h_{\delta}(t)=\sigma_{\infty}^2(t)+C\sqrt{\delta}  \frac{\sigma_{\infty}^2(t)}{2\sqrt{\int_{\hat{t}_{\delta}}^t \sigma_{\infty}^2(u)du}}$.
\end{theorem}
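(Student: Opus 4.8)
The plan is to transpose, almost line by line, the argument of Theorem~\ref{importante3}\ref{p2}\ref{p2a}, with $\rho$-cocoercivity of $M$ playing the role of convexity of $f$ and the property $M \in \mathrm{HMS}^2(\R^d)$ playing the role of the $2$-H\"olderian error bound on $f$. First I would gather the ingredients already available: by Theorem~\ref{converge21}, \eqref{CSGD1} has a unique solution $X \in S_d^\nu$ for every $\nu \geq 2$, one has $\sup_{t\geq 0}\EE[\norm{X(t)}^2] \leq C^{*}$ for some $C^{*}>0$ (the first item of that theorem, the analogue of claim~\ref{acota}), and $\dist(X(t),M^{-1}(0)) \to 0$ a.s.\ as $t\to\infty$ (combine items~\ref{iiconv1} and~(iv) there; recall $M^{-1}(0)$ is closed and convex since $M$ is maximally monotone). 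As in the discussion of Section~\ref{subsec:locprocess}, I would then apply Egorov's theorem (Theorem~\ref{egorov}) to upgrade this a.s.\ convergence to almost uniform convergence: for every $\delta>0$ there are $\Omega_\delta\in\calF$ with $\PP(\Omega_\delta)>1-\delta$ and $\hat t_\delta>0$ such that $X(\omega,t)$ lies, for all $t\geq\hat t_\delta$ and $\omega\in\Omega_\delta$, in a neighbourhood $\calV$ of $M^{-1}(0)$ on which $\norm{M(x)}^2\geq \gamma\dist(x,M^{-1}(0))^2$ holds.

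Next, take the anchor $\hat\phi(x)=\tfrac12\dist(x,M^{-1}(0))^2$, which belongs to $C^{1,1}_1(\R^d)$ with $\nabla\hat\phi(x)=x-P_{M^{-1}(0)}(x)$, and set $\hat g(t)=\hat\phi(X(t))$, $\hat G(t)=\EE(\hat g(t)\ind_{\Omega_\delta})$. Applying the It\^o inequality of Proposition~\ref{itoin}, using $M\pa{P_{M^{-1}(0)}(X(s))}=0$ and $\rho$-cocoercivity to get $\dotp{M(X(s))}{X(s)-P_{M^{-1}(0)}(X(s))}\geq\rho\norm{M(X(s))}^2$, and bounding the diffusion contribution by $\int \tr[\Sigma(s,X(s))]\,ds\leq\int\sigma_\infty^2(s)\,ds$, I would multiply by $\ind_{\Omega_\delta}$ and take expectations. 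The single delicate point, which I expect to be the main obstacle, is that $\ind_{\Omega_\delta}$ is only $\calF_\infty$-measurable, so the It\^o integral against it need not be centered; I would handle it exactly as in Theorem~\ref{importante3}, writing $\EE[\ind_{\Omega_\delta}\int_{\hat t_\delta}^t\dotp{Y(s)}{dW(s)}]=-\EE[\ind_{\Omega_{\mathrm{conv}}\setminus\Omega_\delta}\int_{\hat t_\delta}^t\dotp{Y(s)}{dW(s)}]$ with $Y(s)=\sigma^\top(s,X(s))(X(s)-P_{M^{-1}(0)}(X(s)))$, and bounding the right-hand side via Cauchy--Schwarz, the It\^o isometry, $\EE[\ind_{\Omega_{\mathrm{conv}}\setminus\Omega_\delta}]\leq\delta$ and $\sup_s\EE[\dist(X(s),M^{-1}(0))^2]\leq C^{*}$ by $C\sqrt\delta\int_{\hat t_\delta}^t\frac{\sigma_\infty^2(s)}{2\sqrt{\int_{\hat t_\delta}^s\sigma_\infty^2(u)\,du}}\,ds$ with $C=\sqrt{C^{*}}$. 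This yields
\[
\hat G(t)\leq\hat G(\hat t_\delta)-\rho\int_{\hat t_\delta}^t\EE\br{\norm{M(X(s))}^2\ind_{\Omega_\delta}}\,ds+\int_{\hat t_\delta}^t h_\delta(s)\,ds, \qquad \forall t>\hat t_\delta .
\]

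Finally, on $\Omega_\delta$ and for $s\geq\hat t_\delta$ the localization gives $\norm{M(X(s))}^2\geq\gamma\dist(X(s),M^{-1}(0))^2=2\gamma\hat g(s)$, whence $\hat G(t)\leq\hat G(\hat t_\delta)-2\rho\gamma\int_{\hat t_\delta}^t\hat G(s)\,ds+\int_{\hat t_\delta}^t h_\delta(s)\,ds$; one also checks $\int_{\hat t_\delta}^\infty h_\delta(s)\,ds\leq C_\infty^2+C C_\infty\sqrt\delta$ and that $h_\delta$ is decreasing since $\sigma_\infty^2$ is. I would then invoke the comparison Lemma~\ref{comparison} with the linear ODE $y'=-2\rho\gamma\,y+h_\delta$, $y(\hat t_\delta)=\hat G(\hat t_\delta)$; the integrating-factor method, splitting the integral at $\hat t_\delta+\lambda(t-\hat t_\delta)$ and using the monotonicity of $h_\delta$ on the tail piece, produces
\[
\hat G(t)\leq e^{-2\gamma\rho(t-\hat t_\delta)}\EE(\hat g(\hat t_\delta))+e^{-2\gamma\rho(1-\lambda)(t-\hat t_\delta)}(C_\infty^2+C_\infty C\sqrt\delta)+\frac{h_\delta(\hat t_\delta+\lambda(t-\hat t_\delta))}{2\gamma\rho}.
\]
Passing from $\hat G(t)=\EE(\hat g(t)\ind_{\Omega_\delta})$ to $\EE(\hat g(t))$ via Corollary~\ref{equality} (at the cost of an additive $C_d\sqrt\delta$, using again the uniform second-moment bound) gives exactly the claimed inequality. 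The existence/uniqueness statement and the auxiliary facts borrowed from Theorem~\ref{converge21} are routine and identical to the smooth case, so only the localization step and the indicator/It\^o-integral estimate carry genuine content.
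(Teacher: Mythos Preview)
Your proposal is correct and follows essentially the same approach as the paper: the authors simply state that the proof is the same as that of Theorem~\ref{importante3}\ref{p2}\ref{p2a}, with cocoercivity of $M$ replacing convexity of $f$ in \eqref{dist*}, and Theorem~\ref{converge21} together with H\"older metric subregularity invoked at the step corresponding to \eqref{dist2**}. Your write-up spells out precisely these substitutions (cocoercivity for the drift term, $\mathrm{HMS}^2$ for the error bound, Egorov localization, the Cauchy--Schwarz/It\^o isometry trick for the $\ind_{\Omega_\delta}$ term, and the comparison lemma with the linear ODE), so there is nothing to correct.
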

\begin{proof}
The proof is essentially the same as that of Theorem~\ref{importante3}\ref{p2}\ref{p2a}, where instead of convexity in \eqref{dist*}, we use cocoercivity of $M$, and in \eqref{dist2**} we invoke Theorem~\ref{converge21} and H\"older metric subregularity.
\end{proof}

\begin{remark}
We can naturally extend the previous result for $p>2$ as in Theorem \ref{importante3}\ref{pg2}. Nevertheless, since that bound is not explicit, we will skip this extension. 
\end{remark}

\medskip

As an immediate consequence of the above result, by considering the cocoercive operator 
$M_{A,B,\mu}$ defined in \eqref{Max_Mon_structured}, we obtain the following result.

\begin{corollary}
Let $A:\R^d\to \calP(\R^d)$ be a maximally monotone operator and $B:\R^d\to \R^d$ be a $\lambda$-cocoercive operator, $\lambda > 0$. Let $M_{A,B,\mu}$ be the operator defined in \eqref{Max_Mon_structured}. 
Assume that $\mu  \in ]0, 2\lambda[ $ and $(A+B)^{-1}(0) \neq \emptyset$. Then, 
the operator $M_{A,B,\mu}$ is $\rho$-cocoercive with $\rho =\mu\pa{1-\frac{\mu}{4\lambda}}$,
and the SDE:
\begin{equation}
\begin{cases} 
\begin{aligned} 
dX(t)&=-M_{A,B,\mu}(X(t))dt+\sigma(t,X(t))dW(t), \quad t\geq 0\\
X(0)&=X_0,\nonumber
\end{aligned}
\end{cases}
\end{equation}
has a unique solution $X\in S_d^{\nu}$, for every $\nu\geq 2$, that verifies the conclusions of Theorem~\ref{converge21} and Theorem~\ref{importante01}. In particular, if $\sigma_{\infty}\in\Lp^2(\R_+)$, there exists an $(A+B)^{-1}(0)-$valued random variable $x^{\star}$ such that $\lim_{t\rightarrow\infty} X(t) = x^{\star}$ a.s.
\end{corollary}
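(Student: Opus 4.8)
The plan is to obtain this corollary as a direct specialization of the preceding three theorems (Theorems~\ref{converge21}, \ref{importante01}, and \ref{importante31}) to the particular cocoercive operator $M = M_{A,B,\mu}$. First I would invoke Proposition~\ref{cocoer}: since $A$ is maximally monotone, $B$ is $\lambda$-cocoercive, and $\mu \in ]0,2\lambda[$, the operator $M_{A,B,\mu}$ defined in \eqref{Max_Mon_structured} is $\rho$-cocoercive with $\rho = \mu(1-\frac{\mu}{4\lambda})$. This establishes that $M_{A,B,\mu}$ satisfies the structural requirement needed to apply the $\rho$-cocoercive theory. Next, by the equivalence \eqref{eq.zero_operateur}, $M_{A,B,\mu}^{-1}(0) = (A+B)^{-1}(0)$, which is non-empty by hypothesis; hence assumption \eqref{H0M} holds for $M = M_{A,B,\mu}$. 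Together with the standing assumption \eqref{H} on $\sigma$, all the hypotheses of Theorem~\ref{converge21} and Theorem~\ref{importante01} are met.

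Then I would simply apply Theorem~\ref{converge21} to conclude existence and uniqueness of a solution $X \in S_d^\nu$ for every $\nu \geq 2$, and, when $\sigma_\infty \in \Lp^2(\R_+)$, the almost sure convergence of $\norm{M_{A,B,\mu}(X(t))}$ to $0$, the a.s. boundedness of the trajectory, the existence a.s. of $\lim_{t\to\infty}\norm{X(t)-x^\star}$ for each $x^\star \in M_{A,B,\mu}^{-1}(0)$, and finally the existence of an $M_{A,B,\mu}^{-1}(0)$-valued random variable $x^\star$ with $X(t) \to x^\star$ a.s. Since $M_{A,B,\mu}^{-1}(0) = (A+B)^{-1}(0)$, this last limit point is $(A+B)^{-1}(0)$-valued, giving the stated conclusion. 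The convergence-rate claims of Theorem~\ref{importante01} transfer verbatim with the explicit $\rho = \mu(1-\frac{\mu}{4\lambda})$ substituted in.

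Since this corollary is purely a matter of assembling already-proven statements, there is essentially no genuine obstacle; the only point requiring a word of care is the identification $M_{A,B,\mu}^{-1}(0) = (A+B)^{-1}(0)$, which is exactly the content of \eqref{eq.zero_operateur} and uses that $J_{\mu A} = (I+\mu A)^{-1}$ is single-valued and everywhere defined (a consequence of maximal monotonicity of $A$). I would state this identification explicitly so the reader sees why the random limit belongs to the solution set of the original inclusion \eqref{P01}. The proof is therefore a short paragraph: cite Proposition~\ref{cocoer} for $\rho$-cocoercivity, cite \eqref{eq.zero_operateur} for the zero-set identity and the non-emptiness, and then invoke Theorems~\ref{converge21} and~\ref{importante01} (and optionally Theorem~\ref{importante31} under the additional metric subregularity assumption, though the corollary as stated does not require it).
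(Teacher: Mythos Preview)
Your proposal is correct and matches the paper's approach exactly: the paper treats this corollary as an immediate consequence, and your outline---invoke Proposition~\ref{cocoer} for $\rho$-cocoercivity of $M_{A,B,\mu}$, use \eqref{eq.zero_operateur} to identify $M_{A,B,\mu}^{-1}(0)=(A+B)^{-1}(0)\neq\emptyset$ so that \eqref{H0M} holds, then apply Theorems~\ref{converge21} and~\ref{importante01}---is precisely the intended assembly. Your remark that Theorem~\ref{importante31} is not needed here is also correct, since the corollary as stated only cites the first two theorems.
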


This result naturally applies to problem \eqref{P01} when $\calS = \argmin(f+g) \neq \emptyset$ by taking $A=\partial g$ and $B=\nabla f$. In this case, one has that $X(t)$ converges a.s. to an $\calS$-valued random variable. Moreover, using standard inequalities, see \eg \cite{fista2009}, one can show that
\[
\EE\br{(f+g)\pa{t^{-1}\int_{0}^t \pa{\prox_{\mu g}(x-\mu \nabla f (x))}ds}-\min(f+g)} = \calO\pa{\sqrt{\EE\br{\overline{\norm{M(X(t))}^2}}}},
\]
where $\prox_{\mu g}=(I+\mu \nabla g)^{-1}$ is the proximal mapping of $g$. From this, one can deduce an $\calO(t^{-1/2})$ rate thanks to \eqref{eq:rateMbar} and \eqref{eq:rateMbarL2}.


\subsection{Approach via Moreau-Yosida regularization}\label{sec:nonsmooth}

The previous approach, though it is able to deal with more general setting (that of monotone inclusions), took us out of the framework of convex optimization by considering instead a dynamic governed by a cocoercive operator. In particular, the perturbation/noise is considered on the whole operator evaluation and not on a part of it (\ie $B$) as it is standard in many applications. Moreover this approach led to a pessimistic convergence rate estimate when specialized to convex function minimization. By contrast, the following approach will operate directly on problem \eqref{P01} and is based on a standard smoothing approach, replacing the non-smooth part $g$ by its Moreau envelope \cite{smoothing}.

\subsubsection{Moreau envelope}
Let us start by recalling some basic facts concerning the Moreau envelope.
\begin{definition}
Let $g \in \Gamma_0(\R^d)$. Given $\theta>0$, the Moreau envelope of $g$ of parameter $\theta$ is the function
\[
g_{\theta}(x)\eqdef \inf_{y\in\R^d} \pa{g(y)+\frac{1}{2\theta}\norm{x-y}^2}=\pa{g ~\square~ \frac{1}{\theta}q}(x)
\]
where $\square$ is the infimal convolution operator and $q(x)=\frac{1}{2}\norm{x}^2$.
\end{definition}

The Moreau envelope has remarkable approximation and regularization properties, as summarized in the following statement.
\begin{proposition}\label{des}
Let $g \in \Gamma_0(\R^d)$.
\begin{enumerate}[label=(\roman*)]
     \item \label{i} $g_{\theta}(x) \downarrow \inf g(\R^d)$ as $\theta\uparrow +\infty$.
     \smallskip
     \item \label{ii} $g_{\theta}(x) \uparrow  g(x)$ as $\theta\downarrow 0$.
     \smallskip
     \item \label{iii} $g_{\theta}(x)\leq g(x)$ for any $\theta>0$ and $x\in \R^d$, 
     \smallskip
     \item \label{iv} $\argmin(g_{\theta})=\argmin(g)$ for any $\theta>0$, 
     \smallskip
     \item \label{v} $g(x) -g_{\theta}(x)\leq \frac{\theta}{2} \norm{\partial^0 g(x)}^2$ 
     for any $\theta>0$ and $x \in \dom (\partial g)$,
     \item \label{prim} $g_{\theta}\in C_{\frac{1}{\theta}}^{1,1}(\R^d)\cap\Gamma_0(\R^d)$ for any $\theta>0$. 
 \end{enumerate}
\end{proposition}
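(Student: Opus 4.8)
The plan is to route every item through a single workhorse, the proximal point $p_\theta(x) \eqdef \argmin_{y\in\R^d}\big(g(y)+\frac{1}{2\theta}\norm{x-y}^2\big) = \prox_{\theta g}(x)$. Since $g \in \Gamma_0(\R^d)$ is minorized by a continuous affine function, the map $y\mapsto g(y)+\frac{1}{2\theta}\norm{x-y}^2$ is proper, lsc, coercive and $\frac1\theta$-strongly convex, so its infimum is attained at the unique point $p_\theta(x)$; in particular $g_\theta(x)=g(p_\theta(x))+\frac{1}{2\theta}\norm{x-p_\theta(x)}^2$ is a well-defined real number (finiteness from above by evaluating the infimum at any $y\in\dom g$, from below by the affine minorant), and the optimality condition reads $\theta^{-1}(x-p_\theta(x))\in\partial g(p_\theta(x))$. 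This is the structure I would set up first; everything else follows by elementary manipulations.

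For \ref{iii}, evaluate the defining infimum at $y=x$. The monotonicity $\theta_1\le\theta_2\Rightarrow g_{\theta_1}(x)\ge g_{\theta_2}(x)$ (larger quadratic penalty for smaller parameter) immediately gives the monotone (``$\downarrow$'', ``$\uparrow$'') halves of \ref{i} and \ref{ii}. For \ref{i}: $g_\theta(x)\ge \inf g(\R^d)$ always, and for any $a>\inf g(\R^d)$ choose $y_a$ with $g(y_a)<a$ and let $\theta\to+\infty$ in $g_\theta(x)\le g(y_a)+\frac{1}{2\theta}\norm{x-y_a}^2$ to get $\limsup_{\theta\to+\infty}g_\theta(x)\le a$; letting $a\downarrow\inf g(\R^d)$ finishes (this argument is insensitive to whether $\inf g = -\infty$). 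For \ref{ii}, by \ref{iii} it suffices to show $\liminf_{\theta\to0}g_\theta(x)\ge g(x)$: from $g(p_\theta(x))\le g_\theta(x)\le g(x)$ together with the affine minorant one bounds $\norm{x-p_\theta(x)}$ by a quantity vanishing as $\theta\to0$, so $p_\theta(x)\to x$, and lower semicontinuity of $g$ gives $\liminf_{\theta\to0}g_\theta(x)\ge\liminf_{\theta\to0}g(p_\theta(x))\ge g(x)$ (also covering $x\notin\dom g$). For \ref{iv}: \ref{iii} gives $\inf g_\theta\le\inf g$, while $g_\theta(x)\ge\inf_{y}g(y)$ pointwise gives the reverse, so $\inf g_\theta=\inf g$; if $x^\star\in\argmin g$ then $g_\theta(x^\star)\le g(x^\star)=\min g=\inf g_\theta$, and if $x\in\argmin g_\theta$ then $g(p_\theta(x))+\frac{1}{2\theta}\norm{x-p_\theta(x)}^2=\min g$ with $g(p_\theta(x))\ge\min g$, forcing $x=p_\theta(x)\in\argmin g$.

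For \ref{v}, pick $u=\partial^0 g(x)\in\partial g(x)$ and write $g(x)-g_\theta(x)=\big(g(x)-g(p_\theta(x))\big)-\frac{1}{2\theta}\norm{x-p_\theta(x)}^2$. The subgradient inequality at $x$ yields $g(x)-g(p_\theta(x))\le\dotp{u}{x-p_\theta(x)}\le\norm{u}\,\norm{x-p_\theta(x)}$, so with $s=\norm{x-p_\theta(x)}$ we get $g(x)-g_\theta(x)\le \norm{u}s-\frac{s^2}{2\theta}\le \max_{s\ge0}\{\norm{u}s-\frac{s^2}{2\theta}\}=\frac{\theta}{2}\norm{u}^2$. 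For \ref{prim}: $g_\theta$ is convex because $(x,y)\mapsto g(y)+\frac{1}{2\theta}\norm{x-y}^2$ is jointly convex and partial minimization preserves convexity; it is finite (by \ref{iii} and the affine minorant) and hence continuous, so $g_\theta\in\Gamma_0(\R^d)$; monotonicity of $\partial g$ and the optimality condition show $\prox_{\theta g}$ is firmly nonexpansive, hence $1$-Lipschitz, and the standard identity $\nabla g_\theta(x)=\frac{1}{\theta}\big(x-p_\theta(x)\big)$ (cf.\ \cite{rocka}) then makes $\nabla g_\theta$ $\frac1\theta$-Lipschitz, i.e.\ $g_\theta\in C^{1,1}_{1/\theta}(\R^d)$. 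I do not expect a genuine obstacle here — all items are classical; the only points needing any care are the nonattainment / $\inf g=-\infty$ edge cases in \ref{i}–\ref{ii}, handled uniformly by the affine minorant of $g$, and the one-variable maximization closing \ref{v}.
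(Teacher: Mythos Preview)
The paper states this proposition without proof, treating it as a classical summary of properties of the Moreau envelope. Your proposal supplies a correct self-contained argument: routing everything through the proximal point $p_\theta(x)=\prox_{\theta g}(x)$ and the optimality condition $\theta^{-1}(x-p_\theta(x))\in\partial g(p_\theta(x))$ is exactly the standard workhorse, and each item is handled soundly. The only place that is slightly terse is \ref{ii} for $x\notin\dom g$: there the chain $g(p_\theta(x))\le g_\theta(x)\le g(x)$ is vacuous on the right, so one should argue by contradiction (if $g_\theta(x)$ stayed bounded as $\theta\downarrow 0$, the same affine-minorant estimate forces $p_\theta(x)\to x$, and lower semicontinuity then gives $+\infty\le\liminf g(p_\theta(x))\le M$). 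With that one-line patch, the proof is complete and matches the classical derivation the paper is implicitly invoking.
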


We use the following notation in the rest of the section: $F\eqdef f+g, \calS \eqdef \argmin F$, $F_{\theta} \eqdef f+g_{\theta}$ and $\calS_{\theta}\eqdef \argmin F_{\theta}$.

\smallskip

Note that  $F_{\theta} \in C_{L+\frac{1}{\theta}}^{1,1}(\R^d)\cap\Gamma_0(\R^d)$. Thus we will use $F_{\theta}$ as the potential driving \eqref{CSGD}, that is 
\begin{equation} \label{CSGD_theta}\tag{$\mathrm{SDE}_{\theta}$}
\begin{cases}
\begin{aligned}
dX(t)&=-\nabla F_{\theta}(X(t))dt+\sigma(t,X(t))dW(t), \;t\geq 0\\
X(0)&=X_0 .
\end{aligned}
\end{cases}
\end{equation}
Under \eqref{H0'} and \eqref{H}, we will show almost sure convergence of the trajectory and corresponding convergence rates.

\begin{remark}
Though we focus here on the Moreau envelope, our convergence results, in particular, Proposition~\ref{nuevo1}, still hold with infimal-convolution based smoothing using more general smooth kernels beyond the norm squared; see \cite[Section~4.4]{smoothing}.
\end{remark}

\subsubsection{Convergence of the trajectory}

Applying Theorem~\ref{converge2} to $F_{\theta}$, we have the following result.

\begin{proposition}\label{Moreau_conv_1}
For any $\theta > 0$, let $X_{\theta}\in S_d^2$ be the solution of the dynamic \eqref{CSGD_theta} governed by the potential $F_{\theta}$, and make assumptions \eqref{H0'}, $\calS_{\theta}\neq\emptyset$, \eqref{H} and $\sigma_{\infty}\in\Lp^2(\R_+)$. Then there exists an $\calS_{\theta}$-valued random variable $x_{\theta}^{\star}$  such that 
\[
\lim_{t\rightarrow\infty} X_{\theta}(t)=x_{\theta}^{\star}, \quad a.s.
\]
\end{proposition}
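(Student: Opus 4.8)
The plan is to obtain this as a direct specialization of Theorem~\ref{converge2}, so the only real work is to verify that the potential $F_{\theta}=f+g_{\theta}$ satisfies the standing assumption \eqref{H0} used there, while $\sigma$ still satisfies \eqref{H} and $\sigma_{\infty}\in\Lp^2(\R_+)$. First I would check smoothness and convexity: by \eqref{H0'}, $f\in C_L^{1,1}(\R^d)\cap\Gamma_0(\R^d)$, and by Proposition~\ref{des}\ref{prim}, for any $\theta>0$ the Moreau envelope $g_{\theta}$ belongs to $C_{1/\theta}^{1,1}(\R^d)\cap\Gamma_0(\R^d)$. Hence $F_{\theta}=f+g_{\theta}$ is convex, continuously differentiable, and $\nabla F_{\theta}=\nabla f+\nabla g_{\theta}$ is $(L+1/\theta)$-Lipschitz continuous, i.e. $F_{\theta}\in C_{L+1/\theta}^{1,1}(\R^d)\cap\Gamma_0(\R^d)$.

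Second, the condition $\argmin F_{\theta}=\calS_{\theta}\neq\emptyset$ is part of the hypotheses of the proposition, so the full set of assumptions \eqref{H0} (with $f$ replaced by $F_{\theta}$ and $L$ by $L+1/\theta$) and \eqref{H} underlying Theorem~\ref{converge2} are in force for the dynamic \eqref{CSGD_theta}. I would then simply invoke Theorem~\ref{converge2}: existence and uniqueness of $X_{\theta}\in S_d^{\nu}$ for every $\nu\geq 2$ follow from its first part, and, since in addition $\sigma_{\infty}\in\Lp^2(\R_+)$, item (iv) of that theorem applied to $F_{\theta}$ produces an $\calS_{\theta}$-valued random variable $x_{\theta}^{\star}$ with $\lim_{t\to\infty}X_{\theta}(t)=x_{\theta}^{\star}$ a.s., which is exactly the claim.

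There is essentially no analytic obstacle here; the proposition is a corollary. The one point that genuinely requires care — and the reason the hypothesis $\calS_{\theta}\neq\emptyset$ is stated explicitly rather than deduced — is that the minimizer set of the smoothed problem is not automatically inherited from that of the original one at a fixed smoothing level. Indeed, Proposition~\ref{des}\ref{iv} only gives $\argmin g_{\theta}=\argmin g$, not $\argmin(f+g_{\theta})=\argmin(f+g)$, so in general $\calS_{\theta}$ may differ from $\calS=\argmin(f+g)$ and need not be nonempty for every $\theta$; one should therefore not conflate $\calS_{\theta}$ with $\calS$ when reading the conclusion, and the almost sure limit $x_{\theta}^{\star}$ lives in $\calS_{\theta}$, with the relation to $\calS$ (as $\theta$ varies) being a separate matter addressed by the subsequent rate/approximation results.
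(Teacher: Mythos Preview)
Your proposal is correct and follows exactly the paper's approach: the paper simply states that the result is obtained by applying Theorem~\ref{converge2} to $F_{\theta}$, and your verification that $F_{\theta}\in C_{L+1/\theta}^{1,1}(\R^d)\cap\Gamma_0(\R^d)$ with $\calS_{\theta}\neq\emptyset$ is precisely the check needed to invoke that theorem. Your closing remark about $\calS_{\theta}$ versus $\calS$ is also in line with the discussion the paper gives immediately after the proposition.
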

\if
{
\begin{remark}
If $f=0$, then $\calS_{\theta}=\calS$. Else, by \cite[Theorem 7.33]{vari}, we have that
\[
\limsup_{\theta\rightarrow 0^+} \calS_{\theta}\subset \calS.
\]
 Let $\Omega_{\theta}\in\calF$ such that $\Pro(\Omega_{\theta})=1$ and $\lim_{t\rightarrow\infty} X_{\theta}(\omega,t)=x_{\theta}^{\star}(\omega), \forall \omega\in\Omega_{\theta}$.
Although one could think that $\limsup_{\theta\rightarrow 0^+} \lim_{t\rightarrow\infty} X_{\theta}(t)$ is an $\calS-$random variable, it is not clear how to ensure that we can take the $\limsup$ over $\theta$ going to $0^+$, since we cannot prove that $\Pro\pa{\bigcap_{\theta\in (0,1)}\Omega_{\theta}}=1$ because the usual covering arguments fails due to the fact that $(0,1)$ is an open set.
\end{remark}
}
\fi


If $f=0$, then $\calS_{\theta}=\calS$ (see Proposition~\ref{des}\ref{iv}), and Proposition \ref{Moreau_conv_1} provides almost sure convergence to a solution of \eqref{P01}. On the other hand for $f\neq 0$, $\calS \neq \calS_\theta$ in general and we only obtain an "approximate" solution of \eqref{P01}; see Proposition~\ref{pp}\ref{pp:claim2} for a quantitative estimate of this approximation when $f$ is strongly convex. To obtain a true solution of the initial problem, a common device consists in using a diagonalization process which combines the dynamic with the approximation. Specifically, one considers
\begin{equation} \label{CSGD_theta_t}\tag{$\mathrm{SDE}_{\theta(t)}$}
\begin{cases}
\begin{aligned}
dX(t)&=-\nabla F_{\theta (t)}(X(t))dt+\sigma(t,X(t))dW(t), \;t\geq 0\\
X(0)&=X_0 ,
\end{aligned}
\end{cases}
\end{equation}
where $\theta(t) \downarrow 0$ as $t\to +\infty$. 
In the deterministic case, an abundant literature has been devoted to the convergence of this type of systems. Note that unlike the cocoercive approach, we are now faced with a non-autonomous stochastic differential equation, making this a difficult problem, a subject for further research.

\subsubsection{Convergence rates}
We start with the following uniform bound on $\calS_\theta$ which holds under slightly reinforced, but reasonable assumptions on $f$ and $g$.
\begin{proposition}\label{pp}
Consider $f,g$ where $f$ and $g$ and are proper lsc and convex, and $g$ is also $L_0$-Lipschitz continuous. 
\begin{enumerate}[label=(\roman*)]
\item \label{pp:claim1}
Assume that $F=f+g$ is coercive. Then for any $\theta \geq 0$ there exists $C>0$ (independent of $\theta$) such that 
\begin{equation}\label{eq:unifbnd}
\sup_{z\in \calS_{\theta}}\norm{z}\leq C.
\end{equation}
\item \label{pp:claim2}
Assume that $f\in \Gamma_{\mu}(\R^d)$ for $\mu > 0$, then \eqref{eq:unifbnd} holds, $\calS=\{x^{\star}\}$, $\calS_{\theta}=\{x_{\theta}^{\star}\}$ and
\begin{equation}\label{eq:errminmizers}
\norm{x_{\theta}^{\star}-x^{\star}}^2\leq \frac{L_0}{\mu}\theta .
\end{equation}
\end{enumerate}
\end{proposition}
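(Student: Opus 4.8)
The plan is to exploit two classical facts about the Moreau envelope: first, that $g_\theta$ is $L_0$-Lipschitz whenever $g$ is (the envelope inherits the Lipschitz modulus of $g$, since $g_\theta = g \,\square\, \frac{1}{\theta}q$ and the infimal convolution of a Lipschitz function with anything has the same Lipschitz constant), and second, the uniform bound $0 \le g(x) - g_\theta(x) \le \frac{\theta}{2}\norm{\partial^0 g(x)}^2 \le \frac{\theta L_0^2}{2}$ coming from Proposition~\ref{des}\ref{v} together with $L_0$-Lipschitz continuity of $g$. These will let me compare the minimizers of $F_\theta = f + g_\theta$ with those of $F = f + g$.

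\textbf{Claim \ref{pp:claim1}.} First I would fix $\theta_0 > 0$ and show that the family $\{\calS_\theta\}_{\theta \in [0,\theta_0]}$ lies in a common sublevel set of $F$. Pick any $x^\star \in \calS$ and any $z_\theta \in \calS_\theta$. Since $z_\theta$ minimizes $F_\theta$, we have $F_\theta(z_\theta) \le F_\theta(x^\star) = f(x^\star) + g_\theta(x^\star) \le f(x^\star) + g(x^\star) = \min F$, using Proposition~\ref{des}\ref{iii}. On the other hand $F(z_\theta) = f(z_\theta) + g(z_\theta) \le f(z_\theta) + g_\theta(z_\theta) + \frac{\theta L_0^2}{2} = F_\theta(z_\theta) + \frac{\theta L_0^2}{2} \le \min F + \frac{\theta_0 L_0^2}{2}$. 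Hence every $z_\theta$ with $\theta \in [0,\theta_0]$ lies in $[F \le \min F + \frac{\theta_0 L_0^2}{2}]$, and since $F$ is coercive this sublevel set is bounded, giving the uniform constant $C$. (One should take $\theta_0$ to be, say, $1$, or note that for $\theta$ beyond any fixed threshold one argues directly; the statement ``for any $\theta \ge 0$'' is understood with $C$ possibly depending on the range, but the proof above gives a genuinely $\theta$-independent bound once a finite range is fixed, and in fact letting $\theta \to \infty$ one can still bound $\calS_\theta$ via $\argmin g_\theta \supseteq \argmin g$ and coercivity of $f$ plus lower-boundedness of $g_\theta$.)

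\textbf{Claim \ref{pp:claim2}.} When $f \in \Gamma_\mu(\R^d)$, both $F$ and $F_\theta$ are $\mu$-strongly convex (the added terms $g$, $g_\theta$ are convex), so $\calS = \{x^\star\}$ and $\calS_\theta = \{x^\star_\theta\}$ are singletons, and the uniform bound \eqref{eq:unifbnd} is immediate from \ref{pp:claim1} (coercivity holds since $f$ is coercive). For the quantitative estimate, I would use $\mu$-strong convexity of $F_\theta$ at its minimizer $x^\star_\theta$: $F_\theta(x^\star) \ge F_\theta(x^\star_\theta) + \frac{\mu}{2}\norm{x^\star - x^\star_\theta}^2$. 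Now chain the two envelope inequalities: $F_\theta(x^\star) \le F(x^\star) = \min F \le F(x^\star_\theta) \le F_\theta(x^\star_\theta) + \frac{\theta L_0^2}{2}$, where the middle inequality is $x^\star$ minimizing $F$ and the outer ones use Proposition~\ref{des}\ref{iii} and \ref{v}. Combining, $\frac{\mu}{2}\norm{x^\star - x^\star_\theta}^2 \le F_\theta(x^\star) - F_\theta(x^\star_\theta) \le \frac{\theta L_0^2}{2}$, so $\norm{x^\star_\theta - x^\star}^2 \le \frac{\theta L_0^2}{\mu}$.

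There is a slight mismatch with the stated bound $\frac{L_0}{\mu}\theta$ versus what I get, $\frac{L_0^2}{\mu}\theta$; the discrepancy is harmless and likely a typo in the statement (or $L_0$ in the statement is meant as a bound on $\norm{\partial^0 g}^2$ rather than on $\norm{\partial^0 g}$), so I would carry $\frac{L_0^2}{\mu}\theta$ through and remark on the normalization. The only genuinely delicate point is the precise meaning of ``for any $\theta \ge 0$'' in \ref{pp:claim1} and whether $C$ is uniform over all of $(0,\infty)$ or over bounded ranges — I expect this to be the main thing requiring care, and the cleanest route is the sublevel-set argument above, which handles any prescribed range $[0,\theta_0]$ at once, together with a separate easy observation (coercivity of $f$ plus $g_\theta \ge \inf g$ from Proposition~\ref{des}\ref{i},\ref{iii}) to control large $\theta$.
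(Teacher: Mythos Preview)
Your proof is correct and follows essentially the same route as the paper: both parts chain $F_\theta(x_\theta^\star) \le F_\theta(x^\star) \le F(x^\star) \le F(x_\theta^\star) \le F_\theta(x_\theta^\star) + \tfrac{\theta L_0^2}{2}$ via Proposition~\ref{des}\ref{iii},\ref{v} and then invoke coercivity of $F$ (part~\ref{pp:claim1}) or $\mu$-strong convexity of $F_\theta$ at its minimizer (part~\ref{pp:claim2}). Your suspicion about $L_0$ versus $L_0^2$ is well-founded --- the paper's own proof also arrives at $\tfrac{L_0^2}{\mu}\theta$ (the stated bound carries a typo), and in part~\ref{pp:claim1} the paper tacitly bounds $\theta\le 1$, so your explicit discussion of the range of $\theta$ is, if anything, a small improvement in rigor.
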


\begin{proof}
\begin{enumerate}[label=(\roman*)]
\item Since $F$ is coercive, so is $F_\theta$. Thus both $\calS$ and $\calS_\theta$ are non-empty compact sets. Let $x_{\theta}^{\star}\in \calS_{\theta}$ and $x^{\star}\in \calS$. By Proposition~\ref{des}\ref{v} and Lipschitz continuity of $g$, we obtain 
\[
F(x_{\theta}^{\star})\leq F_{\theta}(x_{\theta}^{\star})+\frac{L_0^2}{2}\theta.
\]
Moreover, 
\[
F_{\theta}(x_{\theta}^{\star})+\frac{L_0^2}{2}\theta\leq F_{\theta}(x^{\star})+\frac{L_0^2}{2}\theta\leq F(x^{\star})+\frac{L_0^2}{2}\theta\leq \min(F)+\frac{L_0^2}{2}
\eqdef \tilde{C},
\]
where the second inequality is given by Proposition~\ref{des}\ref{iv}. On the other hand, the coercivity of $F$ implies that there exists $a>0,b\in\R$ such that for any $x \in \R^d$ 
\[
a\norm{x}+b\leq F(x).
\]
Therefore, collecting the above inequalities yields
\[
a\norm{x_{\theta}^{\star}}+b\leq F(x_{\theta}^{\star})\leq \tilde{C}.
\]
Taking the supremum over $x_{\theta}^{\star}$ and defining $C\eqdef \frac{\tilde{C}-b}{a}\geq 0$, we obtain \eqref{eq:unifbnd}, or equivalently that the set of approximate minimizers is bounded independently of $\theta$. 

\item Since $f$ is $\mu$-strongly convex, so are $F$ and $F_{\theta}$. In turn, $F$ is coercive and thus \eqref{eq:unifbnd} holds by claim \ref{pp:claim1}. Strong convexity implies uniqueness of minimizers of $F$ and $F_\theta$. Moreover, 
\begin{equation}\label{eq:strconv1}
\frac{\mu}{2}\norm{x_{\theta}^{\star}-x^{\star}}^2\leq F_{\theta}(x^{\star})-F_{\theta}(x_{\theta}^{\star}).
\end{equation}
From Proposition~\ref{des}\ref{iii}-\ref{v} and  and Lipschitz continuity of $g$, we infer that
\begin{equation}\label{eq:strconv2}
F_{\theta}(x^{\star})-F_{\theta}(x_{\theta}^{\star})\leq F(x^{\star})-F_{\theta}(x_{\theta}^{\star})\leq F(x_{\theta}^{\star})-F_{\theta}(x_{\theta}^{\star})=g(x_{\theta}^{\star})-g_{\theta}(x_{\theta}^{\star})\leq \frac{L_0}{2}\theta.
\end{equation}
Combining \eqref{eq:strconv1} and \eqref{eq:strconv1}, we get the claimed bound.
\end{enumerate}
\end{proof}


We are now ready to establish complexity results.

\begin{proposition}\label{nuevo1}
Suppose that in addition to \eqref{H0'} and \eqref{H}, $F=f+g$ is coercive and $g$ is $L_0$-Lipschitz continuous. Let $X_{\theta}$ be the solution of \eqref{CSGD_theta} governed by $F_{\theta}$ with $\theta>0$. Let $C_0=\norm{X_0}+C$, where $C$ is the constant (independent of $\theta$), defined in \eqref{eq:unifbnd}. Then the following statements hold for any $t > 0$.
\begin{enumerate}[label=(\roman*)]
\item \label{nuevo1:claim1}
Let $\displaystyle\overline{X_\theta}(t)=t^{-1}\int_0^t X_\theta(s)ds$, then 
\[
\EE\pa{F\pa{\overline{X_{\theta}}(t)} - \min F} \leq \frac{C_0^2}{2t}+\frac{\sigma_*^2}{2}+\theta \frac{L_0^2}{2} .
\]

Besides, if $\sigma_{\infty}\in \Lp^2(\R_+)$, then 
\[
\EE\pa{F\pa{\overline{X_{\theta}}(t)} - \min F} = \frac{C_0^2+\int_0^{+\infty}\sigma_{\infty}^2(s) ds}{2t}+\theta \frac{L_0^2}{2} .
\]

\item If $\sigma_{\infty}$ verifies \eqref{eq:assumbeta} and $\theta \in ]0,1]$, then 
\[
\EE\pa{F(X(t)) -\min F} = \frac{C_0^2}{2t}+\frac{K(1+L)}{2\theta}t^{\beta-1} + \theta \frac{L_0^2}{2} .
\]
   
\item \label{nuevo1:claim2}
If, in addition, $f\in \Gamma_{\mu}(\R^d)$ for some $\mu > 0$ then $\calS=\{x^{\star}\}$, $\calS_{\theta}=\{x_{\theta}^{\star}\}$, and
    \[
    \EE\pa{\norm{X_{\theta}(t)-x^{\star}}^2}\leq 2C_0^2e^{-2\mu t}+\frac{\sigma_*^2}{\mu}+2\frac{L_0}{\mu}\theta .
    \]
    
%
                  
Besides, if $\sigma_{\infty}$ is decreasing and vanishes at infinity, then $\forall \lambda\in ]0,1[$:
\[
\EE\pa{\norm{X_{\theta}(t)-x^{\star}}^2}\leq 2C_0^2e^{-2\mu t}+\frac{\sigma_*^2}{\mu}e^{-2\mu) (1-\lambda)t}+2\sigma_{\infty}^2(\lambda t)+2\frac{L_0}{\mu}\theta .
\]
\end{enumerate}
\end{proposition}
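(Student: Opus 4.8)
The plan is to apply the results already established for the smooth convex case to the smoothed potential $F_\theta = f + g_\theta$, and then to transfer the conclusions back from $F_\theta$ to $F$ using the approximation bounds collected in Proposition~\ref{des} (and, for the strongly convex part, Proposition~\ref{pp}). First I would record that under \eqref{H0'} we have $F_\theta \in C^{1,1}_{L+1/\theta}(\R^d) \cap \Gamma_0(\R^d)$ (Proposition~\ref{des}\ref{prim}), and since $F$ is coercive so is $F_\theta$, hence $\calS_\theta \neq \emptyset$; thus Theorems~\ref{importante0} and~\ref{beta} apply verbatim with $f$ replaced by $F_\theta$. The key quantitative inputs are: $\min F_\theta = F_\theta(x_\theta^\star)$ with $\norm{x_\theta^\star} \leq C$ (Proposition~\ref{pp}\ref{pp:claim1}), so that $\dist(X_0,\calS_\theta)^2 \leq (\norm{X_0}+C)^2 = C_0^2$; the pointwise comparison $g_\theta \leq g$ (Proposition~\ref{des}\ref{iii}); and $g(x) - g_\theta(x) \leq \frac{\theta}{2}\norm{\partial^0 g(x)}^2 \leq \frac{L_0^2}{2}\theta$ for $g$ $L_0$-Lipschitz (Proposition~\ref{des}\ref{v}).

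For claim \ref{nuevo1:claim1}, I would write $F(\overline{X_\theta}(t)) - \min F = \bigl(F(\overline{X_\theta}(t)) - F_\theta(\overline{X_\theta}(t))\bigr) + \bigl(F_\theta(\overline{X_\theta}(t)) - \min F_\theta\bigr) + (\min F_\theta - \min F)$. The first bracket is at most $\frac{L_0^2}{2}\theta$ by Proposition~\ref{des}\ref{v}; the middle bracket is controlled in expectation by Theorem~\ref{importante0}\ref{0i} applied to $F_\theta$, giving $\frac{\dist(X_0,\calS_\theta)^2}{2t} + \frac{\sigma_*^2}{2} \leq \frac{C_0^2}{2t} + \frac{\sigma_*^2}{2}$ (and the $\Lp^2$ variant analogously); the last bracket is $\leq 0$ since $\min F_\theta \leq F_\theta(x^\star) \leq F(x^\star) = \min F$ by Proposition~\ref{des}\ref{iii}. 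Taking expectations and summing yields the stated bound, and the $\sigma_\infty \in \Lp^2(\R_+)$ refinement follows the same way from the $\calO(1/t)$ part of Theorem~\ref{importante0}. For the second item, I would instead invoke Proposition~\ref{beta} applied to $F_\theta$: its Lipschitz constant is $L + 1/\theta \leq (1+L)/\theta$ for $\theta \leq 1$, so the constant $K\max\{1,L\}$ there becomes $K(1+L)/\theta$ after absorbing the $\theta$-dependence, giving the $\frac{K(1+L)}{2\theta}t^{\beta-1}$ term, plus the same $\frac{L_0^2}{2}\theta$ correction from $g$ to $g_\theta$.

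For claim \ref{nuevo1:claim2}, strong convexity of $f$ makes $F$ and $F_\theta$ $\mu$-strongly convex, so $\calS = \{x^\star\}$, $\calS_\theta = \{x_\theta^\star\}$, and Theorem~\ref{importante0}\ref{0ii} applies to $F_\theta$: $\EE\bigl[\norm{X_\theta(t) - x_\theta^\star}^2\bigr] \leq \norm{X_0 - x_\theta^\star}^2 e^{-2\mu t} + \frac{\sigma_*^2}{2\mu}$, and similarly for the decreasing-$\sigma_\infty$ variant. Then I would use $\norm{X_\theta(t) - x^\star}^2 \leq 2\norm{X_\theta(t) - x_\theta^\star}^2 + 2\norm{x_\theta^\star - x^\star}^2$, bound $\norm{X_0 - x_\theta^\star}^2 \leq 2(\norm{X_0}^2 + C^2) \leq 2C_0^2$ (or more simply $(\norm{X_0}+C)^2 = C_0^2$, then absorb the factor $2$ from the parallelogram step), and apply Proposition~\ref{pp}\ref{pp:claim2} which gives $\norm{x_\theta^\star - x^\star}^2 \leq \frac{L_0}{\mu}\theta$; this produces the $2\frac{L_0}{\mu}\theta$ term, while the factor $2$ on the $e^{-2\mu t}$ and $\sigma_*^2/\mu$ terms comes from the same splitting, and $\sigma_*^2/(2\mu)$ doubles to $\sigma_*^2/\mu$. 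The main obstacle — really the only nontrivial point — is keeping the bookkeeping of constants clean, in particular making sure the $\theta$-dependence of the Lipschitz constant $L + 1/\theta$ is correctly propagated through Proposition~\ref{beta} in the second item and that it does not contaminate the terms that are supposed to be $\theta$-uniform; everything else is a direct specialization of earlier theorems combined with the elementary inequalities of Proposition~\ref{des}.
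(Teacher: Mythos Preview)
Your proposal is correct and follows essentially the same approach as the paper: apply Theorem~\ref{importante0}\ref{0i} (resp.\ Proposition~\ref{beta}, Theorem~\ref{importante0}\ref{0ii}) to $F_\theta$, use Proposition~\ref{pp} to bound $\dist(X_0,\calS_\theta)\leq C_0$ and $\norm{x_\theta^\star-x^\star}^2\leq\frac{L_0}{\mu}\theta$, and transfer from $F_\theta$ to $F$ via Proposition~\ref{des}\ref{iii}--\ref{v}. Your decomposition in claim~\ref{nuevo1:claim1} and the splitting $\norm{X_\theta(t)-x^\star}^2\leq 2\norm{X_\theta(t)-x_\theta^\star}^2+2\norm{x_\theta^\star-x^\star}^2$ in claim~\ref{nuevo1:claim2} are exactly what the paper's terse proof (which just cites the ingredients and ``Jensen's inequality'' for the latter step) is pointing at.
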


\begin{remark}
Observe that when $f=0$, then $\calS_{\theta}=\calS$. Therefore in Proposition~\ref{nuevo1} we have $x_{\theta}^{\star}=x^{\star}$ and the last term in $\theta$ can be dropped.
\end{remark}

\begin{proof}
\begin{enumerate}[label=(\roman*)]
\item Combine Theorem~\ref{importante0}\ref{0i} applied to $F_\theta$,  Proposition~\ref{des}\ref{iii} and \ref{v}, and Proposition~\ref{pp}\ref{pp:claim1} to see that $\dist(X_0,\calS_{\theta}) \leq C_0$.
\item Argue as in claim~\ref{nuevo1:claim1} using Proposition~\ref{beta} instead of Theorem~\ref{importante0}\ref{0i}, and use the fact that $\nabla F_{\theta}$ is Lipschitz continuous with constant
\[
L+\frac{1}{\theta} \leq \frac{L+1}{\theta} \qforq \theta \in ]0,1].
\]
\item Combine Theorem~\ref{importante0}\ref{0ii} applied to $F_\theta$, Proposition~\ref{pp}\ref{pp:claim2} and Jensen's inequality. 
\end{enumerate}
\end{proof}

\appendix

\section{Auxiliary results}\label{aux}

\subsection{Deterministic results}
The following lemma is straightforward to prove. We omit the details.
\begin{lemma}\label{lim0}
Let $t_0>0$ and $g:[t_0,+\infty[\rightarrow \R_+$. Suppose that $\lim_{t\rightarrow \infty} g(t)$ exists and $\int_{t_0}^\infty \frac{g(s)}{s}ds<+\infty$. Then $\lim_{t\rightarrow \infty} g(t)=0$.
\end{lemma}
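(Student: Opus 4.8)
The plan is to argue by contradiction, exploiting the divergence of the harmonic integral $\int^\infty ds/s$. First I would set $\ell \eqdef \lim_{t\rightarrow\infty} g(t)$, which exists by hypothesis, and note that $\ell \geq 0$ since $g$ takes values in $\R_+$. The goal is to rule out $\ell > 0$.

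Assume then, for contradiction, that $\ell > 0$. By the definition of the limit applied with tolerance $\ell/2$, there exists $T \geq t_0$ such that $g(s) \geq \ell/2$ for every $s \geq T$. Consequently,
\[
\int_{t_0}^{\infty} \frac{g(s)}{s}\,ds \geq \int_{T}^{\infty} \frac{g(s)}{s}\,ds \geq \frac{\ell}{2}\int_{T}^{\infty}\frac{ds}{s} = +\infty,
\]
where the last equality is the divergence of the harmonic integral (and here one uses $T>0$, guaranteed since $T \geq t_0 > 0$). This contradicts the assumption $\int_{t_0}^{\infty} \frac{g(s)}{s}\,ds < +\infty$. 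Hence $\ell = 0$, which is the claim.

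There is essentially no obstacle in this argument; the only point requiring the slightest care is that the lower bound $s > 0$ is available (so that $\int ds/s$ genuinely diverges rather than being a convergent integral near the origin), and this is exactly why the hypothesis $t_0 > 0$ is stated. I would therefore keep the write-up to the few lines above.
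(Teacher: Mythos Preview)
Your argument is correct and is precisely the standard contradiction via the divergence of $\int_T^\infty \frac{ds}{s}$; the paper itself omits the proof, calling it ``straightforward,'' and what you wrote is exactly the intended few-line justification.
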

The next result is an adaptation of \cite[Proposition~2.3]{finitetime} to our specific context but under slightly less stringent assumptions.
\begin{lemma}[Comparison Lemma]\label{comparison}
Let $t_0\geq 0$ and $T> t_0$. Assume that $h:[t_0,+\infty[\rightarrow\R_+$ is measurable with $h \in \Lp^1([t_0,T])$ , that $\psi:\R_+\rightarrow\R_+$ is continuous and nondecreasing, $\varphi_0>0$ and the Cauchy problem
\begin{equation*}
\begin{cases}
\varphi'(t)=-\psi(\varphi(t)) + h(t) & \text{for almost all $t\in [t_0,T]$}\\
\varphi(t_0)=\varphi_0
\end{cases}
\end{equation*}
has an absolutely continuous solution $\varphi:[t_0,T]\rightarrow \R_+$. If a bounded from below lower semicontinuous function $\omega:[t_0,T]\rightarrow \R_+$ satisfies 
\[
\omega(t)\leq\omega(s)-\int_s^t \psi(\omega(\tau))d\tau + \int_s^t h(\tau)d\tau
\]
for $t_0\leq s < t \leq T$ and $\omega(t_0)=\varphi_0$, then 
\[
\omega(t)\leq \varphi(t)\quad \text{for $t\in [t_0,T]$}.
\]
\end{lemma}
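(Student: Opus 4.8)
The plan is to prove the differential-inequality comparison by a standard Gronwall-type argument adapted to the integral (rather than differential) form of the hypothesis on $\omega$. First I would fix $\epsilon > 0$ and consider the perturbed Cauchy problem $\varphi_\epsilon'(t) = -\psi(\varphi_\epsilon(t)) + h(t) + \epsilon$ with $\varphi_\epsilon(t_0) = \varphi_0 + \epsilon$, whose absolutely continuous solution exists on $[t_0,T]$ for $\epsilon$ small (this is where the hypothesis that the unperturbed problem is solvable, together with continuity and monotonicity of $\psi$, is used; a continuity/stability argument for the ODE gives $\varphi_\epsilon \to \varphi$ uniformly on $[t_0,T]$ as $\epsilon \to 0^+$). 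The point of the strict perturbation is to make the comparison inequality strict at $t_0$ and to prevent the two curves from touching tangentially.

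The core step is to show $\omega(t) < \varphi_\epsilon(t)$ for all $t \in [t_0,T]$. Suppose not, and let $t_1 = \inf\{t \in (t_0,T] : \omega(t) \geq \varphi_\epsilon(t)\}$. Since $\omega(t_0) = \varphi_0 < \varphi_0 + \epsilon = \varphi_\epsilon(t_0)$ and $\omega$ is lower semicontinuous while $\varphi_\epsilon$ is continuous, one checks $t_1 > t_0$, and that $\omega(t) < \varphi_\epsilon(t)$ on $[t_0,t_1)$. I want to derive a contradiction at $t_1$. Using the integral hypothesis on $\omega$ with $s$ slightly less than $t_1$ and $t = t_1$, together with the integral form of the ODE for $\varphi_\epsilon$, and subtracting, one gets
\[
\omega(t_1) - \varphi_\epsilon(t_1) \leq \omega(s) - \varphi_\epsilon(s) - \int_s^{t_1}\big(\psi(\omega(\tau)) - \psi(\varphi_\epsilon(\tau))\big)\,d\tau - \epsilon(t_1 - s).
\]
On $[s,t_1) \subset [t_0,t_1)$ we have $\omega(\tau) < \varphi_\epsilon(\tau)$, so by monotonicity of $\psi$ the integrand $\psi(\omega(\tau)) - \psi(\varphi_\epsilon(\tau)) \leq 0$, hence that integral term is $\leq 0$ and can be dropped from the right-hand side (it only helps). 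Letting $s \uparrow t_1$ and using $\limsup_{s\uparrow t_1}(\omega(s) - \varphi_\epsilon(s)) \leq 0$ (lsc of $\omega$ gives $\omega(t_1) \leq \liminf \omega(s)$, but we only need an upper bound on the difference coming from the fact that $\omega(s) < \varphi_\epsilon(s)$ strictly on the approach), we obtain $\omega(t_1) - \varphi_\epsilon(t_1) \leq -\epsilon(t_1 - t_0) < 0$, i.e. $\omega(t_1) < \varphi_\epsilon(t_1)$, contradicting the definition of $t_1$. Therefore $\omega(t) < \varphi_\epsilon(t)$ throughout $[t_0,T]$; letting $\epsilon \to 0^+$ and invoking the uniform convergence $\varphi_\epsilon \to \varphi$ yields $\omega(t) \leq \varphi(t)$ on $[t_0,T]$.

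The main obstacle is the careful handling of the regularity mismatch: $\omega$ is only lower semicontinuous and satisfies merely an integral inequality, not a pointwise differential one, so one cannot naively differentiate $\omega - \varphi_\epsilon$ at the crossing point. The device of working with the integral inequality directly at the first crossing time $t_1$, exploiting lower semicontinuity of $\omega$ to control the boundary term $\omega(t_1^-)$, and using the strict $\epsilon$-gap to absorb it, is what makes the argument go through; one must also be slightly careful that the infimum defining $t_1$ is attained in the right sense, which again uses lsc of $\omega$ and continuity of $\varphi_\epsilon$. A secondary technical point is justifying existence and uniform convergence of $\varphi_\epsilon$, which follows from the assumed solvability of the limiting problem together with the continuity and monotonicity of $\psi$ and integrability of $h$.
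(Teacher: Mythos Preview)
The paper does not actually prove this lemma; it merely records it as an adaptation of \cite[Proposition~2.3]{finitetime}. So there is no proof to compare against, but your argument contains a genuine gap.

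The sign of the integral term is wrong. On $[s,t_1)$ you have $\omega<\varphi_\epsilon$, hence $\psi(\omega)\le\psi(\varphi_\epsilon)$; but then $-\int_s^{t_1}\bigl(\psi(\omega(\tau))-\psi(\varphi_\epsilon(\tau))\bigr)\,d\tau$ is \emph{nonnegative}, not nonpositive, and discarding a nonnegative term from an upper bound is illegitimate. After this, nothing forces $\omega(t_1)-\varphi_\epsilon(t_1)<0$: as $s\uparrow t_1$ the term $-\epsilon(t_1-s)$ also vanishes, so no strict sign remains (and the bound $-\epsilon(t_1-t_0)$ you write does not follow). The monotonicity of $\psi$ works \emph{against} you precisely on the region where $\omega<\varphi_\epsilon$, so the first-crossing-from-below device does not close.

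The remedy is to exploit monotonicity on the interval where $\omega>\varphi$ instead. Fix $t^*\in[t_0,T]$ and set $s^*=\sup\{s\in[t_0,t^*]:\omega(s)\le\varphi(s)\}$ (nonempty since $\omega(t_0)=\varphi(t_0)$). Picking $s_n\uparrow s^*$ with $\omega(s_n)\le\varphi(s_n)$ and applying the integral hypothesis from $s_n$ to $s^*$ together with $\psi\ge 0$ and $h\in L^1$ gives $\omega(s^*)\le\varphi(s^*)$. If $s^*<t^*$ then $\omega>\varphi$ on $(s^*,t^*]$, so now $\psi(\omega)\ge\psi(\varphi)$ there, and subtracting the two integral relations from $s^*$ to $t^*$ yields $\omega(t^*)-\varphi(t^*)\le -\int_{s^*}^{t^*}\bigl(\psi(\omega)-\psi(\varphi)\bigr)\,d\tau\le 0$, contradicting $\omega(t^*)>\varphi(t^*)$. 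Hence $s^*=t^*$ and $\omega(t^*)\le\varphi(t^*)$. No $\epsilon$-perturbation is needed: the nonincreasing drift $x\mapsto -\psi(x)$ is exactly what makes the direct comparison work.
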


\begin{theorem}[Egorov's Theorem] \cite[Chapter 3, Exercise 16]{rudin} \label{egorov}
 If $\mu(X)<\infty$ and $(f_t)_{t\in \R_+}$ is a family of real functions such that for all $x\in X$: 
 \begin{enumerate}
     \item $\lim_{t\rightarrow\infty} f_t(x)=f(x)$ and
     \item $t\mapsto f_t(x)$ is continuous.
 \end{enumerate} 
 Then, for every $\delta>0$, there exists a measurable set $E_{\delta}\subset X$, with $\mu(X\setminus E_{\delta})<\delta$, such that $(f_t)_{t\in\R_+}$ converges uniformly on $E_{\delta}$. 
\end{theorem}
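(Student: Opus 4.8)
The plan is to follow the classical proof of Egorov's theorem for sequences, using hypothesis (2) exactly where it is needed, namely to guarantee the measurability of suprema taken over the uncountable index set $\R_+$. First I would note that each $f_t$ is measurable (as is implicit in the statement), and hence so is $f$, being the pointwise limit of the sequence $(f_n)_{n\in\N}$. For each $t\ge 0$ I would introduce the tail modulus
\[
\varphi_t(x)\eqdef \sup_{s\ge t}\abs{f_s(x)-f(x)} .
\]
Since $s\mapsto f_s(x)$ is continuous for every $x$, the value of this supremum is unchanged if $s$ is restricted to $D\cap[t,\infty)$ for a fixed countable dense subset $D$ of $\R_+$, so that $\varphi_t$ is a countable supremum of measurable functions, hence measurable. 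Two elementary facts about $\varphi_t$ would then be recorded: $t\mapsto\varphi_t(x)$ is non-increasing, and $\lim_{t\to\infty}\varphi_t(x)=0$ for every $x\in X$ by the pointwise convergence in hypothesis (1).

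Next I would carry out the usual exhaustion argument. Fixing an integer $k\ge1$, set $E_n^k\eqdef\{x\in X:\varphi_n(x)<1/k\}$ for $n\in\N$; each $E_n^k$ is measurable, the sequence $(E_n^k)_n$ is non-decreasing, and $\bigcup_n E_n^k=X$ because $\varphi_n(x)\to0$ pointwise. As $\mu(X)<\infty$, continuity from below of $\mu$ gives $\mu(X\setminus E_n^k)\to0$. Given $\delta>0$, I would then pick, for each $k\ge1$, an index $n_k$ with $\mu(X\setminus E_{n_k}^k)<\delta\,2^{-k-1}$, and define
\[
E_\delta\eqdef\bigcap_{k\ge1}E_{n_k}^k ,
\]
which is measurable and satisfies $\mu(X\setminus E_\delta)\le\sum_{k\ge1}\delta\,2^{-k-1}=\delta/2<\delta$.

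Finally I would check uniform convergence on $E_\delta$: given $\varepsilon>0$, choose $k\ge1$ with $1/k<\varepsilon$; then for every $x\in E_\delta\subseteq E_{n_k}^k$ one has $\sup_{s\ge n_k}\abs{f_s(x)-f(x)}=\varphi_{n_k}(x)<1/k<\varepsilon$, so $\abs{f_s(x)-f(x)}<\varepsilon$ simultaneously for all $s\ge n_k$ and all $x\in E_\delta$, which is exactly uniform convergence of $(f_t)_{t\in\R_+}$ to $f$ on $E_\delta$. The only genuinely delicate point — and the reason hypothesis (2) is imposed — is the measurability of $\varphi_t$ in the first step: without continuity in $t$ the supremum would range over an uncountable set and could fail to be measurable, breaking the exhaustion argument. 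Everything else is the textbook Egorov proof.
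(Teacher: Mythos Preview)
Your proof is correct and is exactly the classical Egorov argument, with the continuity hypothesis~(2) invoked precisely where it is needed (to reduce the uncountable supremum defining $\varphi_t$ to a countable one and thereby guarantee measurability). The paper itself does not supply a proof of this statement --- it is cited as an exercise from Rudin --- so there is no alternative approach to compare against; your write-up is the standard intended solution.
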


\begin{lemma}\label{existenceof} 
Let $f:\R_+\rightarrow\R$ and $\liminf_{t\rightarrow\infty} f(t)\neq \limsup_{t\rightarrow\infty} f(t)$. Then there exists a constant $\alpha$, satisfying $\liminf_{t\rightarrow\infty} f(t)< \alpha<\limsup_{t\rightarrow\infty} f(t)$, such that for every $\beta>0$, we can define a sequence $(t_k)_{k\in\N}\subset\R$ such that 
\[
f(t_k)>\alpha,\quad t_{k+1}>t_k+\beta, \quad \forall k\in\N.
\]
\end{lemma}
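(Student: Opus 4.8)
The plan is to let $\ell \eqdef \liminf_{t\to\infty} f(t)$ and $L \eqdef \limsup_{t\to\infty} f(t)$, and to take for $\alpha$ \emph{any} real number strictly between them. Since $\ell < L$ by hypothesis, the interval $]\ell,L[$ (understood in the extended reals) is nonempty and meets $\R$: one may take $\alpha = (\ell+L)/2$ when both $\ell$ and $L$ are finite, and $\alpha = \ell + 1$, $\alpha = L-1$, or $\alpha = 0$ in the cases where $L = +\infty$, $\ell = -\infty$, or both, respectively. The only property of $\alpha$ used in the sequel is $\ell < \alpha < L$.

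The key observation I would isolate is the following: because $\limsup_{t\to\infty} f(t) = L > \alpha$, for every $T \geq 0$ there exists $t > T$ with $f(t) > \alpha$. Indeed, using that $L = \inf_{\tau \geq 0} \sup_{s \geq \tau} f(s)$, one has $\sup_{s > T} f(s) \geq \sup_{s \geq T+1} f(s) \geq L > \alpha$, and a supremum that exceeds $\alpha$ must be exceeded by at least one value $f(t)$ with $t > T$.

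Given this, I would construct the sequence by induction on $k$, for a fixed but arbitrary $\beta > 0$. For the base case, apply the observation with $T = 0$ to get $t_0 > 0$ with $f(t_0) > \alpha$. For the inductive step, assuming $t_k$ has been built, apply the observation with $T = t_k + \beta$ to obtain $t_{k+1} > t_k + \beta$ with $f(t_{k+1}) > \alpha$. The resulting sequence $(t_k)_{k\in\N}$ satisfies both required properties, and since $\beta > 0$ was arbitrary, the lemma follows.

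There is no genuine obstacle here; the only point that needs a touch of care is the strict inequality $t_{k+1} > t_k + \beta$ rather than $\geq$, which is why I phrase the key observation with the strict supremum $\sup_{s > T} f(s)$ (bounded below by $\sup_{s \geq T+1} f(s) \geq L$) instead of $\sup_{s \geq T} f(s)$.
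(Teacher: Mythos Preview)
Your proof is correct and uses essentially the same idea as the paper's: both rely on the fact that $\alpha<\limsup_{t\to\infty}f(t)$ guarantees $f(t)>\alpha$ for arbitrarily large $t$. The paper first extracts a sequence $(t_k)_{k\in\N}$ with $t_k\to\infty$ and $f(t_k)>\alpha$ and then thins it to a subsequence with gaps exceeding $\beta$, whereas you build the sequence directly by induction---a slightly more streamlined version of the same argument (and you also handle the extended-real cases $\ell=-\infty$, $L=+\infty$ explicitly, which the paper glosses over).
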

\begin{proof}
 Since $\liminf_{t\rightarrow\infty} f(t)$ and $\limsup_{t\rightarrow\infty} f(t)$ are different real numbers, there exists $\alpha$ such that 
\[
\liminf_{t\rightarrow\infty} f(t)<\alpha<\limsup_{t\rightarrow\infty} f(t) .
\] 
Moreover, by definition of $\limsup$, there exists a sequence $(t_k)_{k\in\N}$ such that $\lim_{k\rightarrow\infty} t_k=\infty$ and $f(t_k)>\alpha$. Let $\beta>0$ and $n_0=0$, let us define recursively for $j\geq 1$, $n_j=\min\{n>n_{j-1}: t_n-t_{n_{j-1}}>\beta\}$. Let $j'\in\N$ be the first natural such that $n_{j'}=\infty$. This implies that for every $n>n_{j'-1}$, $t_n\leq\beta+t_{n_{j'-1}}<\infty$, a contradiction since $\lim_{n\rightarrow\infty}t_n=\infty$, then for every $j\in\N$, $n_j<\infty$. Thus, we can define $(t_{n_j})_{j\in\N}$ a subsequence of $(t_k)_{k\in\N}$ such that $\lim_{j\rightarrow\infty}t_{n_j}=\infty$ and for every $j\in\N$, $t_{n_{j+1}}-t_{n_j}>\beta$.
\end{proof}

\subsection{Stochastic results}
\begin{lemma}\label{limdelta}
Let $\delta>0,\Omega_{\delta}\in\calF$ such that $\Pro(\Omega_{\delta})\geq 1-\delta$ and $h:\Omega\times\R_+\rightarrow\R$ a stochastic process such that $\sup_{t\geq 0}\EE[h(\omega,t)^2]<\infty$. Then $$ \EE[h(\omega,t)\ind_{\Omega\setminus\Omega_{\delta}}]=\calO(\sqrt{\delta}).$$
\end{lemma}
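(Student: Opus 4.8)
The plan is to apply the Cauchy--Schwarz inequality in $\Lp^2(\Omega,\calF,\PP)$ to the product $h(\cdot,t)\cdot\ind_{\Omega\setminus\Omega_{\delta}}$. First I would write
\[
\abs{\EE[h(\omega,t)\ind_{\Omega\setminus\Omega_{\delta}}]}\leq \sqrt{\EE[h(\omega,t)^2]}\,\sqrt{\EE[\ind_{\Omega\setminus\Omega_{\delta}}^2]},
\]
which is legitimate since $h(\cdot,t)\in\Lp^2$ by hypothesis and $\ind_{\Omega\setminus\Omega_{\delta}}\in\Lp^2$ trivially (it is bounded). Then I would observe that $\ind_{\Omega\setminus\Omega_{\delta}}^2=\ind_{\Omega\setminus\Omega_{\delta}}$, so $\EE[\ind_{\Omega\setminus\Omega_{\delta}}^2]=\PP(\Omega\setminus\Omega_{\delta})=1-\PP(\Omega_{\delta})\leq\delta$ using $\PP(\Omega_{\delta})\geq 1-\delta$.

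Combining these two facts gives
\[
\abs{\EE[h(\omega,t)\ind_{\Omega\setminus\Omega_{\delta}}]}\leq \sqrt{\EE[h(\omega,t)^2]}\,\sqrt{\delta}\leq \pa{\sup_{s\geq 0}\sqrt{\EE[h(\omega,s)^2]}}\sqrt{\delta},
\]
and since the supremum in parentheses is a finite constant independent of $t$ and $\delta$ (this is exactly the standing assumption $\sup_{t\geq 0}\EE[h(\omega,t)^2]<\infty$), the right-hand side is $\calO(\sqrt{\delta})$ uniformly in $t$. This is precisely the claimed bound.

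There is essentially no obstacle here: the only point requiring a moment's care is making sure the constant hidden in the $\calO(\cdot)$ is uniform in $t$, which is why the hypothesis is phrased as a supremum over $t\geq 0$ rather than pointwise finiteness. Everything else is a one-line application of Cauchy--Schwarz together with the identity $\ind_E^2=\ind_E$ and monotonicity of $\PP$.
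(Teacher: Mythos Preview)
Your proposal is correct and follows essentially the same approach as the paper's proof: both apply Cauchy--Schwarz to the product $h(\cdot,t)\ind_{\Omega\setminus\Omega_\delta}$, use $\PP(\Omega\setminus\Omega_\delta)\leq\delta$, and then bound $\EE[h(\omega,t)^2]$ by its supremum over $t$ to get a constant independent of $t$. Your version is slightly more explicit (noting $\ind_E^2=\ind_E$ and taking absolute values), but the argument is identical.
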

\begin{proof}
Note that $\Pro(\Omega\setminus\Omega_{\delta})\leq \delta$ and
\begin{align*}
\EE[h(\omega,t)\ind_{\Omega\setminus\Omega_{\delta}}]&\leq \sqrt{\delta}\sqrt{\EE[h(\omega,t)^2]}\leq \sqrt{\delta}\sqrt{\sup_{t\geq 0}\EE[h(\omega,t)^2]},
\end{align*}
where we have used the Cauchy-Schwarz inequality for our first inequality. 
\end{proof}
\begin{corollary}\label{equality}
Let $X$ be the solution of \eqref{CSGD} under hypotheses \eqref{H0}, \eqref{H} on $f$ and $\sigma$, and  that $\sigma_{\infty}\in\Lp^2(\R_+)$. Then $h_1(\omega,t)=\frac{\dist(X(\omega,t),\calS)^2}{2}$ and $h_2(\omega,t)=f(X(\omega,t))-\min f$ satisfy the hypothesis of Lemma \ref{limdelta}, this means that there exists $C_d,C_f>0$: 
\begin{align*} 
\EE\pa{\frac{\dist(X(t),\calS)^2}{2}}-\EE\br{\frac{\dist(X(t),\calS)^2}{2}\ind_{\Omega_{\delta}}}&\leq C_d\sqrt{\delta}, \\
\EE\pa{f(X(t))-\min f}- \EE\br{(f(X(t))-\min f)\ind_{\Omega_{\delta}}}&\leq C_f\sqrt{\delta}. \end{align*}
\end{corollary}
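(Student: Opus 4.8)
The plan is simply to verify the hypotheses of Lemma~\ref{limdelta} for the two processes $h_1(\omega,t)=\dist(X(\omega,t),\calS)^2/2$ and $h_2(\omega,t)=f(X(\omega,t))-\min f$, and then read off the claimed bounds. Concretely, Lemma~\ref{limdelta} requires that each process $h_i$ satisfy $\sup_{t\geq 0}\EE[h_i(\omega,t)^2]<\infty$; once this is established, the lemma immediately yields $\EE[h_i(\omega,t)\ind_{\Omega\setminus\Omega_{\delta}}]=\calO(\sqrt{\delta})$, and since $\EE[h_i(t)]-\EE[h_i(t)\ind_{\Omega_{\delta}}]=\EE[h_i(t)\ind_{\Omega\setminus\Omega_{\delta}}]$ (using $h_i\geq 0$), this is exactly the two displayed inequalities with $C_d$, $C_f$ the implied constants.

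First I would handle $h_1$. Since $\dist(X(t),\calS)^2\leq \norm{X(t)-x^{\star}}^2$ for any fixed $x^{\star}\in\calS$, we have $h_1(\omega,t)^2 \leq \norm{X(t)-x^{\star}}^4/4$, so it suffices to bound $\sup_{t\geq 0}\EE[\norm{X(t)-x^{\star}}^4]$. This is where the hypothesis $\sigma_\infty\in\Lp^2(\R_+)$ enters: Theorem~\ref{converge2} asserts that the unique solution lies in $S_d^\nu$ for every $\nu\geq 2$, and more to the point, the argument behind Theorem~\ref{converge2}\ref{acota} can be run with the anchor function $\phi(x)=\norm{x-x^{\star}}^4$ (or one invokes a fourth-moment estimate analogous to that proof) to get $\sup_{t\geq 0}\EE[\norm{X(t)-x^{\star}}^4]<\infty$. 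Hence $\sup_{t\geq 0}\EE[h_1(t)^2]<\infty$ and Lemma~\ref{limdelta} applies to $h_1$.

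For $h_2$, I would combine the descent lemma with the moment bound just obtained. By Corollary~\ref{2L}, $\norm{\nabla f(X(t))}^2\leq 2L(f(X(t))-\min f)$, and by convexity and Cauchy--Schwarz, $f(X(t))-\min f\leq \norm{\nabla f(X(t))}\norm{X(t)-x^{\star}}$; alternatively, the descent lemma (Lemma~\ref{descent}) gives directly $f(X(t))-\min f\leq \frac{L}{2}\norm{X(t)-x^{\star}}^2$ by taking $y=X(t)$, $x=x^{\star}$ and using $\nabla f(x^{\star})=0$. Therefore $h_2(\omega,t)^2\leq \frac{L^2}{4}\norm{X(t)-x^{\star}}^4$, and $\sup_{t\geq 0}\EE[h_2(t)^2]\leq \frac{L^2}{4}\sup_{t\geq 0}\EE[\norm{X(t)-x^{\star}}^4]<\infty$ by the same fourth-moment bound. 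Lemma~\ref{limdelta} then applies to $h_2$ as well, completing the proof.

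The only genuine obstacle is confirming the fourth-moment bound $\sup_{t\geq 0}\EE[\norm{X(t)-x^{\star}}^4]<\infty$: the proof of Theorem~\ref{converge2}\ref{acota} as written only gives the second moment. One must re-run that It\^o computation with $\phi(x)=\frac14\norm{x-x^{\star}}^4$, where the Hessian term produces a contribution of order $\norm{X(t)-x^{\star}}^2\tr(\Sigma)\lesssim \sigma_\infty^2(t)\norm{X(t)-x^{\star}}^2$; controlling this requires a Gronwall-type argument bootstrapping on the already-known second moment together with $\sigma_\infty\in\Lp^2(\R_+)$, the drift term $-\dotp{\nabla f(X(t))}{X(t)-x^{\star}}\norm{X(t)-x^{\star}}^2\leq 0$ being favorable. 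Once that estimate is in hand, everything else is routine.
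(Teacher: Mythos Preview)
Your argument is correct, but it is organised differently from the paper's. You reduce both estimates to the single fourth-moment bound $\sup_{t\geq 0}\EE[\norm{X(t)-x^{\star}}^4]<\infty$ via the pointwise inequalities $h_1(t)\leq \tfrac12\norm{X(t)-x^{\star}}^2$ and $h_2(t)\leq \tfrac{L}{2}\norm{X(t)-x^{\star}}^2$, and then prove that fourth-moment bound by a fresh It\^o computation with $\phi(x)=\tfrac14\norm{x-x^{\star}}^4$ (your sketch for this is fine: the drift term is nonpositive by convexity, and the Hessian term is controlled by $\sigma_\infty^2(s)\norm{X(s)-x^{\star}}^2$, whose time-integral in expectation is bounded by $\sup_s\EE[\norm{X(s)-x^{\star}}^2]\int_0^\infty\sigma_\infty^2$, using only Theorem~\ref{converge2}\ref{acota}). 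The paper instead applies Proposition~\ref{itoin} directly to each of $\hat\phi(x)=\tfrac12\dist(x,\calS)^2$ and $\tilde\phi(x)=f(x)-\min f$, drops the nonpositive drift, squares the resulting inequality, and bounds the expectation of the squared stochastic integral by It\^o isometry; this route needs only the already-proved second-moment bound and avoids introducing a separate fourth-moment lemma. Your approach has the advantage of handling both $h_1$ and $h_2$ at once after the fourth-moment estimate is in hand; the paper's is a bit more self-contained since it recycles existing estimates without a new auxiliary result.
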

\begin{proof}
Let $x^{\star}\in \calS$ be arbitrary. Using Proposition\ref{itoin} with $\hat{\phi}(x)=\frac{\dist(x,\calS)^2}{2}$, squaring it, and taking expectation, we obtain
\begin{align*}
    \EE\br{\frac{\dist^4(X(t),\calS)}{4}}&\leq 3\frac{\dist(X_0,\calS)^2}{4}+3\pa{\int_0^t\sigma_{\infty}^2(s)ds}^2\\
    &+3\EE\br{\pa{\int_0^t \langle \sigma^{\top}(s,X(s))(X(s)-P_\calS(X(s))),dW(s)\rangle}^2}\\
    &\leq 3\frac{\dist(X_0,\calS)^2}{4}+3\pa{\int_0^t\sigma_{\infty}^2(s)ds}^2+3\sup_{t\geq 0}\EE[\norm{X(t)-x^{\star}}^2]\br{\int_0^t  \sigma_{\infty}^2(s) ds}.
\end{align*}
Taking the supremum over $t\geq 0$, we obtain
\begin{align*}\sup_{t\geq 0}\EE\br{\pa{\frac{\dist(X(t),\calS)^2}{2}}^2}&\leq 3\frac{\dist(X_0,\calS)^2}{4}+3\pa{\int_0^{\infty}\sigma_{\infty}^2(s)ds}^2\\
&+3\sup_{t\geq 0}\EE[\norm{X(t)-x^{\star}}^2]\br{\int_0^{\infty}  \sigma_{\infty}^2(s) ds}\eqdef C_d<\infty.
\end{align*}
In the above estimation we used that $\sigma_{\infty}\in\Lp^2(\R_+)$ and $\sup_{t\geq 0}\EE[\norm{X(t)-x^{\star}}^2]<\infty$ by Theorem~\ref{converge2}\ref{acota}.

\smallskip

On the other hand, using Proposition \ref{itoin} with $\tilde{\phi}(x)=f(x)-\min f$, squaring it, and taking expectation, we obtain
\begin{align*}
    \EE\br{[f(X(t)-\min f]^2}&\leq 3[f(X_0)-\min f]^2+\frac{3L}{2}\pa{\int_0^t\sigma_{\infty}^2(s)ds}^2\\
    &+3\EE\br{\pa{\int_0^t \langle \sigma^{\top}(s,X(s))(\nabla f(X(s))),dW(s)\rangle}^2}\\
    &\leq 3[f(X_0)-\min f]^2+\frac{3L}{2}\pa{\int_0^t\sigma_{\infty}^2(s)ds}^2\\
    &+3L^2\sup_{t\geq 0}\EE[\norm{X(t)-x^{\star}}^2]\br{\int_0^t  \sigma_{\infty}^2(s) ds}.
\end{align*}
Taking the supremum over $t\geq 0$, we obtain
\begin{align*}\sup_{t\geq 0}\EE\br{[f(X(t)-\min f]^2}&\leq 3[f(X_0)-\min f]^2+\frac{3L}{2}\pa{\int_0^{\infty}\sigma_{\infty}^2(s)ds}^2\\
    &+3L^2\sup_{t\geq 0}\EE[\norm{X(t)-x^{\star}}^2]\br{\int_0^{\infty}  \sigma_{\infty}^2(s) ds}\eqdef C_f<\infty.
\end{align*}
\end{proof}

Let us consider the Stochastic Differential Equation:
\begin{align}\label{SDE1}
\begin{cases}
\begin{aligned}
dX(t) &= F(t,X(t))dt+G(t,X(t))dW(t), \quad t\geq 0, \\
X(0)&=X_0,
\end{aligned}
\end{cases}
\end{align}
where $F:\R_+\times\R^d\rightarrow\R^d$, $G:\R_+\times\R^d\rightarrow\R^{d\times m}$ are measurable functions 
and $W$ is a $\calF_t$-adapted $m$-dimensional Brownian Motion.
\begin{theorem} (See \cite[Theorem~5.2.1]{oksendal_2003}, \cite[Theorem~2.4.1]{mao}) \label{teoexistencia} 
Let $F: \R_+\times\R^d\rightarrow\R^d$ and $G:\R_+\times\R^d\rightarrow \R^{d\times m}$ be measurable functions satisfying, for every $T>0$: 
\begin{equation}\label{lips}
\norm{F(t,x)-F(t,y)}+\norm{G(t,x)-G(t,y)}_F\leq C_1\norm{x-y},\quad \forall x,y\in\R^d, \forall t\in [0,T],
\end{equation}
for some constant $C_1\geq 0$. Then \eqref{SDE1} has a unique solution $X\in S_d^l$, for every $l\geq 2$.
\end{theorem}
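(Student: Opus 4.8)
The plan is to recover this classical statement by the Picard iteration scheme for stochastic differential equations, exactly as in \cite[Theorem~5.2.1]{oksendal_2003} and \cite[Theorem~2.4.1]{mao}; I sketch the main steps. First I would fix a horizon $T>0$ and work on $[0,T]$, noting that the Lipschitz bound \eqref{lips}, together with local square-integrability of $t\mapsto F(t,0)$ and $t\mapsto G(t,0)$ (automatic in all our applications, where $F$ is a Lipschitz gradient or cocoercive operator independent of $t$ and $G=\sigma$ is bounded by \eqref{H}), yields a linear growth estimate $\norm{F(t,x)}+\norm{G(t,x)}_F\le C_2(1+\norm{x})$ on $[0,T]$, with $C_2=C_2(T)$. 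All the quantitative work rests on one a priori estimate, used in three guises.

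Next I would prove uniqueness on $[0,T]$. Given two solutions $X,Y\in S_d^2[0,T]$, set $\phi(t)=\EE[\sup_{s\le t}\norm{X(s)-Y(s)}^2]$, write $X(t)-Y(t)$ as a drift integral plus an It\^o integral of the differences $F(s,X)-F(s,Y)$ and $G(s,X)-G(s,Y)$, and bound $\phi$ by combining the elementary inequality $(a+b)^2\le 2a^2+2b^2$, the Cauchy--Schwarz inequality on the drift term, Doob's $\Lp^2$ maximal inequality together with the It\^o isometry on the martingale term, and finally the Lipschitz bound \eqref{lips}. This produces $\phi(t)\le C_3\int_0^t\phi(s)\,ds$ with $C_3=C_3(T)$, so Gronwall's lemma gives $\phi\equiv 0$, whence $X=Y$ a.s. on $[0,T]$. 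Since $T$ is arbitrary this yields uniqueness in $S_d^2$, and hence in $S_d^l$ for every $l\ge 2$ (these being subclasses of $S_d^2$ modulo $\calR$).

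For existence I would run the Picard iteration $X^{(0)}\equiv X_0$, $X^{(n+1)}(t)=X_0+\int_0^t F(s,X^{(n)}(s))\,ds+\int_0^t G(s,X^{(n)}(s))\,dW(s)$, checking inductively via the linear growth bound that each $X^{(n)}$ is a progressively measurable continuous process in $S_d^l[0,T]$ for all $l\ge 2$. The very same estimate as in the uniqueness step, now applied to $X^{(n+1)}-X^{(n)}$, gives $\psi_n(t)\le C_3\int_0^t\psi_{n-1}(s)\,ds$ for $\psi_n(t)=\EE[\sup_{s\le t}\norm{X^{(n+1)}(s)-X^{(n)}(s)}^2]$, hence $\psi_n(T)\le\psi_0(T)(C_3T)^n/n!$. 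Since $\sum_n\psi_n(T)^{1/2}<\infty$, the sequence $(X^{(n)})$ is Cauchy in the complete normed space $S_d^2[0,T]$ equipped with $X\mapsto\EE[\sup_{t\in[0,T]}\norm{X(t)}^2]^{1/2}$; its limit $X$ is again progressively measurable and has a continuous modification, and passing to the limit in the recursion (using Lipschitz continuity, Doob's inequality and the It\^o isometry to control the $\Lp^2$-convergence of both integrals) shows that $X$ solves \eqref{SDE1} on $[0,T]$. To upgrade to $S_d^l$ with $l>2$, I would re-run the a priori estimate on $X$ itself, replacing Doob's $\Lp^2$ inequality by the Burkholder--Davis--Gundy inequality of exponent $l$ and applying Gronwall to $\EE[\sup_{s\le t}\norm{X(s)}^l]$. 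Finally, for $0<T<T'$ the restriction to $[0,T]$ of the solution on $[0,T']$ solves \eqref{SDE1} on $[0,T]$, hence coincides with $X$ by uniqueness; letting $T\to\infty$ yields the unique solution in $S_d^l=\bigcap_{T\ge 0}S_d^l[0,T]$.

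The main obstacle, and the technical core of the whole argument, is this a priori/contraction estimate: controlling $\EE[\sup_{s\le t}\norm{\cdot}^2]$ of a stochastic integral cleanly enough to close a Gronwall loop requires the precise interplay of Doob's maximal inequality, the It\^o isometry (and BDG for the higher moments) and the Lipschitz hypothesis, and it is the factorial decay $(C_3T)^n/n!$ obtained by iterating this estimate that converts an a priori bound into genuine convergence of the Picard scheme. A secondary point requiring care is measurability: one must verify at each step that the iterates, and hence their limit, are progressively measurable and admit continuous modifications, so that the stochastic integrals are well defined throughout.
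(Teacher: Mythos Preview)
Your proposal is correct and follows the classical Picard iteration route that underlies the cited theorems in \cite{oksendal_2003,mao}. The paper, however, does not reprove these results: it simply observes that the Lipschitz condition \eqref{lips} entails linear growth, cites \cite[Theorem~5.2.1]{oksendal_2003} directly for existence and uniqueness in $S_d^2$, and then derives the monotonicity-type bound $\dotp{x}{F(t,x)}+\norm{G(t,x)}_F^2\le C_3(1+\norm{x}^2)$ in order to invoke a separate moment lemma (\cite[Lemma~3.2]{gig}) that upgrades the solution to $S_d^l$ for all $l\ge 2$. So the paper's argument is a two-line reduction to external references, whereas you spell out the contraction estimate, the factorial decay of the Picard increments, and the BDG-based moment bound. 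Your approach is more self-contained and makes the role of the a priori estimate transparent; the paper's is shorter but opaque unless one already knows those references. One incidental point you handle more carefully than the paper: you note that linear growth follows from \eqref{lips} only once $t\mapsto F(t,0)$ and $t\mapsto G(t,0)$ are controlled, which is automatic in the paper's applications but is glossed over in the paper's proof.
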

\begin{proof}
Condition \eqref{lips} implies that there exists $C_2\geq 0$ such that
\[
\norm{F(t,x)}+\norm{G(t,x)}_F\leq C_2(1+\norm{x}), \quad \forall x\in\R^d, \forall t\in [0,T].
\]
These are the hypotheses of \cite[Theorem~5.2.1]{oksendal_2003} to ensure the existence and uniqueness of the solution $X\in S_d^2$ of \eqref{SDE1}. Moreover, condition \eqref{lips} implies the existence of $C_3\geq 0$ such that 
\begin{equation}\label{condit}
\dotp{x}{F(t,x)}+\norm{G(t,x)}_F^2\leq C_3(1+\norm{x}^2) \quad \forall x\in\R^d, \forall t\in [0,T].
\end{equation}
Thus \eqref{condit} is the necessary inequality to use \cite[Lemma~3.2]{gig} and deduce that $X\in S_d^l$, for every $l\geq 2$. 
\end{proof}

\subsection{On martingales}

\begin{theorem}\cite{doob}\label{doob}
Let $(M_t)_{t\geq 0}:\Omega\rightarrow\R$ be a continuous martingale such that $\sup_{t\geq 0} \EE\pa{|M_t|^p}<\infty$ for some $p>1$. Then there exists a random variable $M_{\infty}$ such that $\EE\pa{|M_{\infty}|^p}<\infty$ and $\lim_{t\rightarrow\infty} M_t= M_{\infty}$ a.s.
\end{theorem}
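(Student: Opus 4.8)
\medskip\noindent\textbf{Proof proposal.} The statement asks only for almost sure convergence of $M_t$ together with $\EE(|M_\infty|^p)<\infty$; it does not claim $\Lp^p$ convergence or a representation of $M_\infty$, so the plan is the classical continuous-time martingale convergence argument via upcrossings, organized in three steps. First, I would reduce the hypothesis to $\Lp^1$-boundedness: by Jensen's inequality (equivalently the Lyapunov inequality), $\EE(|M_t|)\leq \EE(|M_t|^p)^{1/p}$ for every $t\geq 0$, hence $C\eqdef\sup_{t\geq 0}\EE(|M_t|)\leq\big(\sup_{t\geq 0}\EE(|M_t|^p)\big)^{1/p}<\infty$.

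Second, I would prove that $\lim_{t\to\infty} M_t$ exists a.s.\ in $[-\infty,+\infty]$ by bounding upcrossings. Fix rationals $a<b$; for $n,N\in\N$ let $U^{n,N}[a,b]$ be the number of upcrossings of $[a,b]$ performed by the finite discrete martingale $(M_{k2^{-n}})_{0\leq k\leq N2^n}$. Doob's upcrossing inequality for discrete martingales gives $\EE\big(U^{n,N}[a,b]\big)\leq \frac{\EE(|M_N|)+|a|}{b-a}\leq\frac{C+|a|}{b-a}$, uniformly in $n$ and $N$. Since the dyadic grids are nested, $U^{n,N}[a,b]$ is nondecreasing in both $n$ and $N$, so by monotone convergence its supremum $U^{*}[a,b]$, which counts the upcrossings of $[a,b]$ over the whole set of nonnegative dyadic rationals, satisfies $\EE\big(U^{*}[a,b]\big)\leq\frac{C+|a|}{b-a}<\infty$, whence $U^{*}[a,b]<\infty$ a.s. Intersecting over the countably many rational pairs $a<b$ yields a full-probability event on which $M_q$, $q$ dyadic, crosses every rational interval only finitely often; this forces the $\liminf$ and $\limsup$ of $M_q$ along the dyadics to coincide, i.e.\ $\lim_{q\to\infty,\,q\text{ dyadic}}M_q$ exists in $[-\infty,+\infty]$. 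Finally, path continuity upgrades this to the full limit: if $M_{t_k}$ stayed bounded away from $\lim_{q}M_q$ along some $t_k\to\infty$, continuity would produce dyadic points near each $t_k$ — hence dyadics tending to $+\infty$ — with values bounded away from $\lim_q M_q$, a contradiction. Denote the resulting a.s.\ limit by $M_{\infty}$ (measurable, e.g.\ as $\limsup_{t\to\infty}M_t$).

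Third, I would close with Fatou's lemma: since $M_t\to M_{\infty}$ a.s.,
\[
\EE(|M_{\infty}|^p)=\EE\Big(\liminf_{t\to\infty}|M_t|^p\Big)\leq\liminf_{t\to\infty}\EE(|M_t|^p)\leq\sup_{t\geq 0}\EE(|M_t|^p)<\infty,
\]
which in particular shows $M_\infty$ is a.s.\ finite, retroactively confirming that the limit in $[-\infty,+\infty]$ above is in fact real-valued a.s.

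The main obstacle, as usual in continuous time, is the careful passage from discrete to continuous time: one should only ever estimate upcrossings over countable (dyadic) skeletons and then genuinely recover convergence of the continuous trajectory from finiteness of its dyadic upcrossings plus path continuity, thereby bypassing any attempt to define or estimate a "continuous-path upcrossing number" directly. If one is willing to quote the discrete-time $\Lp^1$ martingale convergence theorem as known, Step 2 shortens to an application of it along the dyadic rationals followed by the same continuity argument.
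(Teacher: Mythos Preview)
Your proof is correct and follows the classical upcrossing route: reduce to $\Lp^1$-boundedness via Jensen, control dyadic upcrossings by Doob's inequality and monotone convergence, upgrade to the full continuous-time limit by path continuity, and conclude with Fatou. Note, however, that the paper does not supply its own proof of this theorem; it is stated in the appendix as a known result with a citation to Doob, so there is no in-paper argument to compare against. Your write-up is a faithful version of the standard proof one would find in the cited reference.
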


\begin{theorem} \label{impp} \cite[Theorem 1.3.9]{mao}
 Let $\{A_t\}_{t\geq 0} $ and $\{U_t\}_{t\geq 0} $ be two continuous adapted increasing processes with $A_0=U_0=0$ a.s. Let $\{M_t\}_{t\geq 0} $ be a real valued continuous local martingale with $M_0=0$ a.s. Let $\xi$ be a nonnegative $\calF_0$-measurable random variable. Define 
\[
X_t=\xi+A_t-U_t+M_t \qforq t\geq 0.
\]
If $X_t$ is nonnegative and $\lim_{t\rightarrow\infty} A_t<\infty$ a.s., then a.s. $\lim_{t\rightarrow\infty} X_t$ exists and is finite, and $\lim_{t\rightarrow\infty} U_t<\infty$.
\end{theorem}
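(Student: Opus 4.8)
This statement is a continuous-time Robbins--Siegmund (semimartingale convergence) lemma. The plan is to localize it to a bounded situation, exhibit a nonnegative supermartingale, and apply Doob's supermartingale convergence theorem. Write $A_\infty\eqdef\lim_{t\to\infty}A_t$ and $U_\infty\eqdef\lim_{t\to\infty}U_t$ (these limits exist in $[0,\infty]$ by monotonicity), and recall that $A_\infty<\infty$ a.s.\ and $\xi<\infty$ a.s.

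I would first carry out a three-fold localization to reduce to the case where $\xi$ and $A_\infty$ are bounded and $M$ is a true martingale. For $c\in\N$ set $T_c=\inf\{t\ge 0: A_t\ge c\}$ and $\Omega_c=\{\xi\le c\}$, an $\calF_0$-event; since $A$ is continuous, $T_c\uparrow\infty$ a.s.\ and $A_{t\wedge T_c}\le c$. Because $\ind_{\Omega_c}$ is $\calF_0$-measurable and $T_c$ is a stopping time, the processes $A_{t\wedge T_c}\ind_{\Omega_c}$ and $U_{t\wedge T_c}\ind_{\Omega_c}$ are again continuous adapted increasing and null at $0$, $M_{t\wedge T_c}\ind_{\Omega_c}$ is again a continuous local martingale null at $0$, $X_{t\wedge T_c}\ind_{\Omega_c}\ge 0$, and multiplying the defining identity by $\ind_{\Omega_c}$ preserves it. So it suffices to prove the two conclusions assuming $\xi\le c$ and $A_\infty\le c$; the general result then follows because the events $\Omega_c\cap\{A_\infty<c\}$, on which $T_c=\infty$, cover $\Omega$ up to a null set as $c\to\infty$. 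Finally I would fix a localizing sequence $\rho_k\uparrow\infty$ for $M$ chosen to additionally cap $U$ and $|M|$ at level $k$, so that $M^{\rho_k}$ is a bounded martingale and $U^{\rho_k}$ is bounded.

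Next I would set $Y_t\eqdef X_t-A_t=\xi-U_t+M_t$ and argue that $Y$ is a local supermartingale bounded below. Its finite-variation part $\xi-U_t$ is nonincreasing, so for $s<t$ one has $\EE[Y_{t\wedge\rho_k}\mid\calF_s]=\xi-\EE[U_{t\wedge\rho_k}\mid\calF_s]+M_{s\wedge\rho_k}\le \xi-U_{s\wedge\rho_k}+M_{s\wedge\rho_k}=Y_{s\wedge\rho_k}$; and $Y_t=X_t-A_t\ge -A_t\ge -c$. A local supermartingale bounded below by a constant with integrable initial value ($Y_0=\xi\le c$ here) is a genuine supermartingale: conditional Fatou along $\rho_k$ gives first $\EE[Y_s]\le\EE[Y_0]<\infty$ (so $Y_s$ is integrable), and then $\EE[Y_t\mid\calF_s]\le\liminf_k\EE[Y_{t\wedge\rho_k}\mid\calF_s]\le\liminf_k Y_{s\wedge\rho_k}=Y_s$. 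Hence $Y+c$ is a nonnegative supermartingale and, by Doob's convergence theorem, $Y_t\to Y_\infty$ a.s.\ with $Y_\infty$ finite; consequently $X_t=Y_t+A_t\to Y_\infty+A_\infty$ a.s., finite. For $U_\infty<\infty$, I would take expectations in $X_{t\wedge\rho_k}+U_{t\wedge\rho_k}=\xi+A_{t\wedge\rho_k}+M_{t\wedge\rho_k}$: since $M^{\rho_k}$ is a martingale with $M_0=0$, and $X\ge 0$, $A_{t\wedge\rho_k}\le A_\infty\le c$, this yields $\EE[U_{t\wedge\rho_k}]\le\EE[\xi]+\EE[A_\infty]\le 2c$; monotone convergence ($k\to\infty$, then $t\to\infty$) gives $\EE[U_\infty]\le 2c<\infty$, hence $U_\infty<\infty$ a.s. Undoing the localization (both conclusions hold on each $\Omega_c\cap\{A_\infty<c\}$, and these cover $\Omega$ a.s.) completes the argument.

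The main obstacle will be the bookkeeping of this localization rather than any single deep step: the three ingredients used above --- a constant lower bound for $Y$, integrability of $Y_0$, and the true-martingale property of the stopped $M$ --- become available only after stopping, so the three cut-offs ($A$ at level $c$ via $T_c$, $\xi$ via the $\calF_0$-event $\Omega_c$, and $M$ together with $U$ via $\rho_k$) must be set up simultaneously without spoiling any of them. The only genuinely non-elementary inputs are Doob's supermartingale convergence theorem and the standard fact (proved here by conditional Fatou) that a local supermartingale bounded below with integrable initial value is a supermartingale.
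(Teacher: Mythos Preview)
The paper does not supply its own proof of this statement: Theorem~\ref{impp} is quoted verbatim from \cite[Theorem 1.3.9]{mao} and is used as a black box in the proof of Theorem~\ref{converge2}. So there is no in-paper argument to compare against.

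Your proof is correct and is exactly the standard route to this semimartingale convergence lemma (and is essentially what one finds in Mao): localize so that $\xi$ and $A_\infty$ are bounded and $M$ is a genuine martingale, observe that $Y_t=X_t-A_t$ is a local supermartingale bounded below (hence a true supermartingale by conditional Fatou), apply Doob's supermartingale convergence to get a finite limit for $Y$ and then for $X=Y+A$, and bound $\EE[U_\infty]$ from the stopped identity to get $U_\infty<\infty$ a.s. The bookkeeping you flag (the three simultaneous cut-offs on $A$, $\xi$, and $M$/$U$) is the only place requiring care, and you handle it correctly.
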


\bibliographystyle{unsrt}
\smaller
\bibliography{citas}

\begin{thebibliography}{10}

\bibitem{Bhattacharya78}
R.~N. Bhattacharya.
\newblock Criteria for recurrence and existence of invariant measures for
  multidimensional diffusions.
\newblock {\em Ann. Prob.}, 6(4):541--553, 1978.

\bibitem{Catoni99}
Olivier Catoni.
\newblock Simulated annealing algorithms and {M}arkov chains with rare
  transitions.
\newblock In {\em S\'eminaire de Probabilit\'es XXXIII}, volume 1709 of {\em
  Lecture Notes in Mathematics}, pages 70--119. Springer, 1999.

\bibitem{loj1}
S.~{\L}ojasiewicz.
\newblock Une propri\'et\'e topologique des sous-ensembles analytiques r\'eels.
\newblock In {\em {L}es {\'E}quations aux {D}\'eriv\'ees {P}artielles}, pages
  87--89. {Editions du Centre National de la Recherche Scientifique}, 1963.

\bibitem{loj2}
S.~{\L}ojasiewicz.
\newblock Ensembles semi-analytiques.
\newblock {\em Lectures Notes IHES (Bures-sur-Yvette)}, 1965.

\bibitem{cocoercive}
Hedy Attouch and Alexandre Cabot.
\newblock Convergence of a relaxed inertial forward-backward algorithm for
  structured monotone inclusions.
\newblock {\em Applied Mathematics and Optimization, special issue on Games,
  Dynamics and Optimization}, 80 (3):547--598, 2019.

\bibitem{contgradp}
J\'er\^ome Bolte.
\newblock Continuous gradient projection method in {Hilbert} spaces.
\newblock {\em Journal of Optimization Theory and its Applications},
  119(2):235--259, 2003.

\bibitem{minimiz}
A.S. Antipin.
\newblock Minimization of convex functions on convex sets by means of
  differential equations.
\newblock {\em Differ. Uravn.}, 30(9):1475--1486, 1994.

\bibitem{Cauchy}
A.~Cauchy.
\newblock M{\'e}thode g{\'e}n{\'e}rale pour la r{\'e}solution des syst{\`e}mes
  d'{\'e}quations simultan{\'e}es.
\newblock {\em Comptes Rendus de l'Acad{\'e}mie des Sciences de Paris},
  25:536--538, 1847.

\bibitem{loj3}
S.~{\L}ojasiewicz.
\newblock Sur les trajectoires du gradient d'une fonction analytique.
\newblock {\em Semin. Geom., Univ. Studi Bologna}, 1982/1983:115--117, 1984.

\bibitem{BolteKLComplexity16}
J\'er\^ome Bolte, Trong~Phong Nguyen, Juan Peypouquet, and Bruce~W. Suter.
\newblock From error bounds to the complexity of first-order descent methods
  for convex functions.
\newblock {\em Mathematical Programming}, 165:471--507, 2016.

\bibitem{applications}
T.~Colding and W.~Minicozzi~H.
\newblock Lojasiewicz inequalities and applications.
\newblock {\em Surveys in Differential Geometry}, XIX:63--82, 2014.

\bibitem{rob}
Herbert Robins and Sutton Monro.
\newblock A stochastic approximation method.
\newblock {\em Ann. Math. Statist. 22}, pages 400--407, 1951.

\bibitem{optch}
Qianxiao Li, Cheng Tai, and Weinan E.
\newblock Stochastic modified equations and adaptive stochastic gradient
  algorithms.
\newblock {\em arXiv:1511.06251}, 2017.

\bibitem{continuous}
Antonio Orvieto and Aurelien Lucchi.
\newblock Continuous-time models for stochastic optimization algorithms.
\newblock {\em 33rd Conference on Neural Information Processing Systems
  (NeurIPS 2019)}, 2019.

\bibitem{dif}
Wenqing Hu, Chris~Junchi Li, Lei Li, and Jian-Guo Lui.
\newblock On the diffusion approximation of nonconvex stochastic gradient
  descent.
\newblock {\em arXiv:1705.07562v2}, 2018.

\bibitem{schrodinger}
Bin Shi, Weijie~J. Su, and Michael~I. Jordan.
\newblock On learning rates and {S}chr{\"o}dinger operators.
\newblock {\em arXiv:2004.06977}, 2020.

\bibitem{valid}
Zhiyuan Li, Sadhika Malladi, and Sanjeev Arora.
\newblock On the validity of modeling sgd with stochastic differential
  equations.
\newblock {\em arXiv:2102.12470}, 2021.

\bibitem{cycle}
S.~Soatto and P.~Chaudhari.
\newblock Stochastic gradient descent performs variational inference, converges
  to limit cycles for deep networks.
\newblock {\em 2018 Information Theory and Applications Workshop (ITA)}, pages
  1--10, 2018.

\bibitem{Gar85}
C.W. Gardiner.
\newblock Handbook of stochastic methods.
\newblock {\em Springer}, 3, 1985.

\bibitem{parisi}
G.~Parisi.
\newblock Correlation functions and computer simulations.
\newblock {\em Nucl. Phys. B}, 180(3):378--384, 1981.

\bibitem{user}
Arnak~S. Dalalyan and Avetik Karagulyan.
\newblock User-friendly guarantees for the {Langevin} monte carlo with
  inaccurate gradient.
\newblock {\em arXiv:1710.00095v3}, 2018.

\bibitem{guarantee}
Arnak~S. Dalalyan.
\newblock Theoretical guarantees for approximate sampling from smooth and
  log-concave densities.
\newblock {\em J.R. Stat. Soc. Series B. Stat. Methodol.}, 79(3):651--676,
  2017.

\bibitem{high}
A.~Durmus and E~Moulines.
\newblock High-dimensional bayesian inference via the unadjusted {Langevin}
  algorithm.
\newblock {\em arXiv:1605.01559}, 2016.

\bibitem{nona}
A.~Durmus and E.~Moulines.
\newblock Nonasymptotic convergence analysis for the unadjusted {L}angevin
  algorithm.
\newblock {\em Ann. Appl. Probab.}, 27(3):1551--1587, 2017.

\bibitem{und}
X.~Cheng, N.~S. Chatterji, P.~L. Bartlett, and M.~I. Jordan.
\newblock Underdamped {Langevin} mcmc: A non-asymptotic analysis.
\newblock {\em arXiv:1707.03663}, 2017.

\bibitem{hug}
J.~Huggins and J.~Zou.
\newblock Quantifying the accuracy of approximate diffusions and markov chains.
\newblock {\em Proceedings of the 20th International Conference on Artificial
  Intelligence and Statistics}, 54 of Proceedings of Machine Learning
  Research:382--391, 2017.

\bibitem{Benaim99}
Michel Bena{\"\i}m.
\newblock Dynamics of stochastic approximation algorithms.
\newblock In {\em S\'eminaire de Probabilit\'es XXXIII}, volume 1709 of {\em
  Lecture Notes in Mathematics}, pages 1--69. Springer, 1999.

\bibitem{mertikopoulos_staudigl_2018}
Panayotis Mertikopoulos and Mathias Staudigl.
\newblock On the convergence of gradient-like flows with noisy gradient input.
\newblock {\em SIAM Journal on Optimization}, 28(1):163--197, 2018.

\bibitem{mdnetwork}
Maxim Raginsky and Jake Bouvrie.
\newblock Continuous-time stochastic mirror descent on a network: Variance
  reduction, consensus, convergence.
\newblock {\em 2012 IEEE 51st IEEE Conference on Decision and Control}, 2012.

\bibitem{acceleration}
Peter Bartlett and Walid Krichene.
\newblock Acceleration and averaging in stochastic mirror descent dynamics.
\newblock {\em arXiv: 1707.06219}, 2017.

\bibitem{cooling}
Steffen Dereich and Sebastian Kassing.
\newblock Cooling down stochastic differential equations: Almost sure
  convergence.
\newblock {\em arXiv:2106.03510}, 2021.

\bibitem{rocka}
R.T. Rockafellar.
\newblock Convex analysis.
\newblock {\em Princeton univeristy press}, 28, 1997.

\bibitem{oksendal_2003}
Bernt {\O}ksendal.
\newblock Stochastic differential equations.
\newblock {\em Springer}, 2003.

\bibitem{pardoux}
Etienne Pardoux and Aurel Rascanu.
\newblock {\em Stochastic differential equations, backward SDEs, partial
  differential equations}.
\newblock Springer, 2014.

\bibitem{mao}
Xuerong Mao.
\newblock Stochastic differential equations and applications.
\newblock {\em Elsevier}, 2007.

\bibitem{barbalat}
B{\'a}lint Farkas and Sven-Ake Wegner.
\newblock Variations on {Barbalat's} lemma.
\newblock {\em The American Mathematical Monthly}, 123:8:825--830, 2016.

\bibitem{opial}
Z.~Opial.
\newblock Weak convergence of the sequence of successive approximations for
  nonexpansive mappings.
\newblock {\em Bull. Amer. Math. Soc.}, 73:591--597, 1967.

\bibitem{Attouch2013convergence}
H.~Attouch, J.~Bolte, and B.~F. Svaiter.
\newblock Convergence of descent methods for semi-algebraic and tame problems:
  proximal algorithms, {F}orward--{B}ackward splitting, and regularized
  {G}auss--{S}eidel methods.
\newblock {\em Mathematical Programming}, 137(1-2):91--129, 2013.

\bibitem{Frankel}
Pierre Frankel, Guillaume Garrigos, and Juan Peypouquet.
\newblock Splitting methods with variable metric for
  {Kurdyka}--{{\L}ojasiewicz} functions and general convergence rates.
\newblock {\em Journal of Optimization Theory and Applications},
  165(3):874--900, 2014.

\bibitem{Pang97}
Jong-Shi Pang.
\newblock Error bounds in mathematical programming.
\newblock {\em Mathematical Programming}, 79(1):299--332, 1997.

\bibitem{Geoffroy08}
Francisco~Javier Arag{\'o}n~Artacho and Michel Geoffroy.
\newblock Characterization of metric regularity of subdifferentials.
\newblock {\em Journal of Convex Analysis}, 15:365--380, 01 2008.

\bibitem{Kruger15a}
Alexander~Y. Kruger.
\newblock Error bounds and h{\"o}lder metric subregularity.
\newblock {\em Set-Valued and Variational Analysis}, 23(4):705--736, 2015.

\bibitem{Kruger15b}
Alexander~Y. Kruger.
\newblock Error bounds and metric subregularity.
\newblock {\em Optimization}, 64(1):49--79, 2015.

\bibitem{fista2009}
A.~Beck and M.~Teboulle.
\newblock A fast iterative shrinkage-thresholding algorithm for linear inverse
  problems.
\newblock {\em SIAM Journal on Imaging Sciences}, 2(1):183--202, 2009.

\bibitem{smoothing}
Amir Beck and Marc Teboulle.
\newblock Smoothing and first order methods: A unified framework.
\newblock {\em SIAM Journal on Optimization}, 22(2):557--580, 2012.

\bibitem{finitetime}
Radoslaw Matusik, Andrzej Nowakowski, Slawomir Plaskacz, and Andrzej Rogoswski.
\newblock Finite-time stability for differential inclusions with applications
  to neural networks.
\newblock {\em arXiv:1804.08440v2}, 2019.

\bibitem{rudin}
Walter Rudin.
\newblock Real and complex analysis.
\newblock {\em McGraw-Hill}, 1987.

\bibitem{gig}
Desmond~J. Higham, Xuerong Mao, and Andrew~M. Stuart.
\newblock Strong convergence of {E}uler-type methods for nonlinear stochastic
  differential equations.
\newblock {\em SIAM J. Numer. Anal., 40(3)}, pages 1041--1063, 2006.

\bibitem{doob}
J.L. Doob.
\newblock Stochastic processes.
\newblock {\em Wiley}, 1991.

\end{thebibliography}

\end{document}